\newtheorem{thm}{Theorem}
\newtheorem{lemma}{Lemma}
\newtheorem{corollary}{Corollary}
\newtheorem{assumption}{Assumption}
\DeclareMathOperator*{\argmax}{argmax}
\DeclareMathOperator*{\argmin}{argmin}
\title{Convergence Rate in Nonlinear Two-Time-Scale Stochastic Approximation 
\\with State (Time)-Dependence}
\author{
    Zixi Chen\textsuperscript{\rm 1},
    Yumin Xu\textsuperscript{\rm 1},
    Ruixun Zhang\textsuperscript{\rm 1,\rm 2,\rm 3,\rm 4}\thanks{Corresponding author}
}
\begin{document}

\maketitle

\begin{abstract}
The nonlinear two-time-scale stochastic approximation is widely studied under conditions of bounded variances in noise. Motivated by recent advances that allow for variability linked to the current state or time, we consider state- and time-dependent noises. We show that the Lyapunov function exhibits polynomial convergence rates in both cases, with the rate of polynomial delay depending on the parameters of state- or time-dependent noises. Notably, if the state noise parameters fully approach their limiting value, the Lyapunov function achieves an exponential convergence rate. We provide two numerical examples to illustrate our theoretical findings in the context of stochastic gradient descent with Polyak-Ruppert averaging and stochastic bilevel optimization.
\end{abstract}

\section{Introduction}
Stochastic approximation (SA) was initially introduced by \citet{robbins1951stochastic} to find the root of an unobserved function operator. Specifically, it aims to approximate the solution of $f(x) = 0$ under noisy observations $f( x_k ) + \xi_k$. This method has been widely adopted in different areas including stochastic optimization, machine learning, and reinforcement learning (RL); see, for example, \citet{sutton2008convergent, Ermoliev2009, borkar2009stochastic, sutton2009fast, pham2010new, bottou2018optimization, xu2019two, lan2020first, wu2020finite, khodadadian2022finite}.

Traditional single-time-scale SA faces challenges in complex applications, and the two-time-scale SA was proposed by \citet{ermoliev1983stochastic} and developed by \citet{borkar1997stochastic} to manage complex processes with both linear and nonlinear update functions. We define the fast-scale as $x_k \in \mathbb{R}^{d_1} $ and the slow-scale as $y_k \in \mathbb{R}^{d_2}$, with unknown update functions $ f : \mathbb{R}^{d_1} \times \mathbb{R}^{d_2} \to \mathbb{R}^{d_1} $ and $ g : \mathbb{R}^{d_1} \times \mathbb{R}^{d_2} \to \mathbb{R}^{d_2} $. $f( x_k , y_k ) + \xi_k$ and $ g( x_k ,y_k ) + \psi_k$ represent their noisy observations, respectively. We consider the following two-time-scale process:
\begin{equation}
\begin{aligned}
     x_{k+1}-x_k = - \alpha_k [ f( x_k , y_k ) + \xi_k ], \\
     y_{k+1}-y_k = - \beta_k [ g( x_k ,y_k ) + \psi_k],
     \label{1}
\end{aligned}
\end{equation}
where $\{\xi_k\}$ and $\{\psi_k\}$ are martingale differences with decreasing sequences of $\alpha_k$ and $\beta_k$. It fits a dynamical system:

\begin{equation}
\begin{aligned}
    \dot{x} &= -f(x,y), \\
    \dot{y} &= -g(x,y).
\end{aligned}
\end{equation}

The goal of a stochastic two-time-scale approximation is to control the sequence so that it converges to a goal $(x^*, y^*)$. Because the two variables $x_k$ and $y_k$ interact with each other, their convergence behavior differs from that of a single variable. Typically, to ensure the convergence across both fast and slow scales, the learning rates $\alpha_k$ and $\beta_k$ are chosen to satisfy $\alpha_k \gg \beta_k$ \citep{konda2004convergence}. Under certain assumptions, they are set to a special polynomial order of magnitude, as discussed in \citet{doan2022nonlinear}. With stricter conditions on functions $f$ and $g$, the ratio of $\alpha_k$ and $\beta_k$ can be fixed, leading to convergence degenerating into a single-time-scale scenario \citep{shen2022single}.

It is subtle to determine the relationship between two timescales and their learning rates $\alpha_k$ and $\beta_k$. Our work seeks an explicit relationship between the choice of order of magnitude under certain conditions. Moreover, in many applications, the stochastic term is not bounded. 
We achieve a faster convergence rate ranging from $\mathcal{O} \left( k^{-2/3} \right)$ to $\mathcal{O} \left( k^{-\infty} \right)$---and even an exponential rate of $\mathcal{O} \left( e^{-\epsilon k} \right)$ in certain cases---with different state noise parameters $\delta_{ij}$ and time noise parameters $\gamma_i$. Here $\mathcal{O} (\cdot)$ indicates that the term is dominated by $c \times \cdot$ with constant $c$ as $k \to \infty$.

\subsection{Motivating Applications}
Two-time-scale SA is widely adopted in many algorithms because it can precisely describe the relation between variables and achieve fast convergence. We are motivated by two algorithms, stochastic gradient descent (SGD) with Polyak-Ruppert averaging and stochastic bilevel optimization.

\paragraph{SGD with Polyak-Ruppert averaging.}
In order to minimize a function $f$ with only noisy observations of its gradient, \citet{ruppert1988efficient} and \citet{polyak1992acceleration} introduced SGD with Polyak-Ruppert averaging where
\begin{equation}
\begin{aligned}
\label{eq:SGD_PR_averaging}
    x_{k+1}-x_k &= - \alpha_k ( \nabla F( x_k ) + \xi_k ), \\
    y_{k+1}-y_k &= - \beta_k ( y_k - x_k ).
\end{aligned}
\end{equation}
This algorithm has the form of two-time-scale SA (\ref{1}) with $f( x , y ) = \nabla F( x )$ and $ g(x,y) = y-x$.

\paragraph{Stochastic bilevel optimization.}
\citet{colson2007overview} provides a review of bilevel optimization, which focus on solving
\begin{equation}
\begin{aligned}
    \min_{y \in \mathbb{R}^{d_2}} l(y) := G(x^*(y),y) \\ \text{s.t.} \quad x^*(y) := \underset{x \in \mathbb{R}^{d_1}}{\argmin} F(x,y),
\end{aligned}
\end{equation}
where $F(x,y)$ and $l(x)$ are called inner and outer objective functions, respectively. With only noisy observations of gradients and Hessian matrices, \citet{shen2022single} apply the two-time-scale (\ref{1}) where
\begin{equation}
\begin{aligned}
\label{eq:SBO}
    f(x,y) &= \nabla_x F(x,y), \\
    g(x,y) &= \nabla_y G(x,y) \\
    &- \nabla_{yx}^2 F(x, y) [\nabla_{xx}^2 F(x, y)]^{-1} \nabla_x G(x,y).
\end{aligned}
\end{equation}
To accelerate their convergence rate, we consider the noisy term under various conditions. For more details of the experiment, see Section \ref{sec:numerical_experiment}.

\subsection{Related Works}
Two-time-scale SA convergence was initially introduced by \citet{borkar1997stochastic}. The convergence rate in the linear case has been established by \citet{konda2004convergence}, where $\beta_k^{-1/2}(y_k - y^*)$ converges to a normal distribution. \citet{kaledin2020finite} established a convergence rate with both martingale and Markovian noises. In these studies, the linear two-time-scale SA achieves a convergence rate of $\mathcal{O} \left( k^{-1} \right)$.

For the nonlinear case that we focus on, \citet{doan2022nonlinear} sets a baseline convergence rate of $\mathcal{O} \left( k^{-2/3} \right)$ when $f$ has a strongly monotonic property. A stronger smoothness condition leads to a single-time-scale convergence rate of $\mathcal{O} \left( k^{-1} \right)$, as shown in \citet{shen2022single}. Recently, \citet{hu2024central} established a central limit theorem for convergence with Markov noises, while \citet{han2024finite} investigated the case with local linearity, achieving convergence rates of $\mathcal{O} \left( \alpha_k \right)$ and $\mathcal{O} \left( \beta_k \right)$. \citet{doan2024fast} studied a variant in which $f$ and $g$ have a special form, also achieving a convergence rate of $\mathcal{O} \left( k^{-1} \right)$. 

In cases without strong monotonicity, \citet{shen2022single} and \citet{zeng2024two} show different convergence rates under certain smoothness conditions. 

In the field of SA, \citet{ilandarideva2023accelerated} and \citet{karandikar2023convergence} recently introduced a general noise assumption termed `state-dependent noise'. We focus on specific cases of their assumptions, which are connected with the `overparametrized model' discussed in \citet{sebbouh2021almost}. In this particular case, $\mathbb{E}[\Vert \xi_k \Vert ^2 ] \to 0$ as $x_k \to x^*$, indicating that the model is overparametrized and has no uncertainty.

\subsection{Contribution}
By requiring that martingale differences $\{\xi_k\}$ and $\{\psi_k\}$ have decreasing variances with respect to state $x_k$ and $y_k$ with parameters $\delta_{ij}$, or time $k$ with parameters $\gamma_i$, we are able to improve the convergence rate of $\mathcal{O} \left( k^{-2/3} \right)$ in \citet{doan2022nonlinear} and $\mathcal{O} \left( k^{-1} \right)$ in \citet{shen2022single}. 

More importantly, our work provides novel insights into understanding the gap between stochastic and deterministic two-time-scale approximations. We show explicitly how the convergence rate depends on the parameters $\delta_{ij}$ or $\gamma_i$ by introducing a linear programming form $m(x)$. In the proof, we propose a novel technique based on Bernoulli's inequality for balancing coefficients under different parameters. Surprisingly, we find that there could be a convergence rate of $\mathcal{O} \left( k^{-\infty} \right)$ for both parameters, and even an exponential rate of $\mathcal{O} \left( e^{-\epsilon k} \right)$ when the parameter $\delta_{ij}$ is equal to 1. 

\subsection{Outline}
Section \ref{sec:problem_setup} formulates the problem and defines notations. Sections \ref{sec:convergence_state} and \ref{sec:convergence_time} demonstrate theoretical results on convergence rates under state-dependent and time-dependent noises, respectively. Section \ref{sec:numerical_experiment} conducts numerical experiments. Section \ref{sec:conclusion} concludes. The supplementary material provides proofs of all theoretical results (Appendix A) and figures of numerical experiments (Appendix B).

\section{Problem Setup}
\label{sec:problem_setup}

In this section, we introduce several assumptions and establish several basic lemmas for convergence analysis. Generally, these assumptions are set to ensure appropriate propositions of update functions and steady convergence of the two-time-scale SA. Assumptions \ref{asm:lambda}--\ref{asm:g} below are commonly used in previous research \citep{doan2022nonlinear, shen2022single}. However, Assumptions \ref{asm:xi-psi}--\ref{asm:xi-psi_3} introduced here shed light on a new aspect of the noise term, which will be discussed later. 

\subsection{Assumptions}

We assume that there is a unique solution $(x^*, y^*)$ s.t.
\begin{equation}
\begin{aligned}
    f(x^*,y^*)&= 0, \\
    g(x^*,y^*)&= 0.
\end{aligned}
\end{equation}

\begin{assumption}
\label{asm:lambda}
Given $y$ there exists an operator $\lambda$ such that $x=\lambda(y)$ is the unique solution of
\begin{align}
    f(\lambda(y), y)=0.
\end{align}
Suppose $\lambda$ is Lipschitz continuous with respect to constant $L_\lambda$,
\begin{align}
    \Vert \lambda(y_1) - \lambda(y_2) \Vert \le L_\lambda \Vert y_1 - y_2 \Vert.
\end{align}
\end{assumption}

Given a fixed $y$, this assumption guarantees a unique equilibrium solution $\lambda(y)$ of $x$, which is the unique update target of $x$. Moreover, the Lipschitz smoothness of $\lambda$ ensures that the update target of $x$ does not change significantly when $y$ changes. These two properties of $\lambda(y)$ will be key when dealing with the differences among $x$, $\lambda(y)$, and $x^*$.

\begin{assumption}
\label{asm:f}
$f$ is Lipschitz continuous with positive constant $L_f$, i.e. $\forall x_1, x_2, y_1$, and $y_2$,
\begin{align}
    \Vert f( x_1 ,y_1) - f( x_2 , y_2) \Vert &\le L_f (\Vert x_1 - x_2 \Vert + \Vert y_1 - y_2 \Vert),
\end{align}
and $f$ is strongly monotone w.r.t $x$ when $y$ is fixed, i.e. there exists a constant $\mu_f > 0$,
\begin{align}
    \langle x_1 - x_2, f(x_1, y) - f(x_2, y) \rangle \ge \mu_f \Vert x_1 - x_2 \Vert^2.
\end{align}
\end{assumption}

With Lipschitz continuity and strong monotonicity of $f$, the update of $x$ is controlled to be close to its target. This can be replaced by weaker requirements when only considering linear two-time-scale SA convergence, as shown in \citet{konda2004convergence} and \citet{kaledin2020finite}.

\begin{assumption}
\label{asm:g}
The operator $g(\cdot, \cdot)$ is Lipschitz continuous with constant $L_g$, i.e. $\forall x_1, x_2, y_1$, and $y_2$,
\begin{align}
    \Vert g(x_1,y_1) - g(x_2, y_2) \Vert \le L_g ( \Vert x_1 - x_2 \Vert + \Vert y_1 - y_2 \Vert).
\end{align}
Moreover, $g$ is 1-point strongly monotone w.r.t $y^*$, i.e. there exists a constant $\mu_g > 0$ such that
\begin{align}
    \langle y - y^*, g(\lambda(y), y) \rangle &\ge \mu_g \Vert y - y^* \Vert^2.
\end{align}
\end{assumption} 

For $g$, similar requirements are necessary to guarantee the convergence of $y$, where the strong monotonicity of $g$ is directly related to the final solution $y^*$. This condition is widely used in the literature on nonlinear two-time-scale studies due to the non-linear nature of the problems involved. Note that strong monotonicity is a weaker condition than strong convexity and covers more application scenarios.

To better illustrate the update process, we introduce two residual variables:
\begin{align}
    \hat{x}_k &= x_k - \lambda(y_k),\\
    \hat{y}_k &= y_k - y^*.
\end{align}
These variables represent the differences between $x_k$, $y_k$, and their respective update targets $\lambda(y_k)$ and $y^*$. We will show that the residual variables decrease to $0$ as $k \to +\infty$ when investigating the Lyapunov function.

In the context of state- or time-dependent SA, the variance of the stochastic term can be a function of state or time \citep{karandikar2023convergence}, rather than being simply bounded. We formulate the following assumptions and introduce parameters $\delta_{ij}$ and $\gamma_i$.

\begin{assumption}
\label{asm:xi-psi}
We denote by $\mathcal{Q}_k$ the filtration containing all the history generated by up to the iteration $k$, i.e.,
\begin{align}
    \mathcal{Q}_k = \{ x_0, y_0, \xi_0, \psi_0, \xi_1, \psi_1, \ldots, \xi_k, \psi_k\} ,
    \label{12}
\end{align}
where $\{\xi_k\}_{k \ge 0}$ are independent random variables with zero mean and bounded variances a.s., and so $\{\psi_k\}_{k \ge 0}$ are. Specifically, there exist $\Gamma_{11}$ and $\Gamma_{22}$ such that variances can be controlled as follows almost surely,
\begin{align}
    \mathbb{E}[\Vert \xi_k \Vert ^2 | \mathcal{Q}_{k-1}] &\le \Gamma_{11} \Vert \hat{x}_k \Vert ^ {2 \delta_{11}} + \Gamma_{12} \Vert \hat{y}_k \Vert ^ {2 \delta_{12}} , \\
    \mathbb{E}[\Vert \psi_k \Vert ^2 | \mathcal{Q}_{k-1}] &\le \Gamma_{21} \Vert \hat{x}_k \Vert ^ {2 \delta_{21}} + \Gamma_{22} \Vert \hat{y}_k \Vert ^ {2 \delta_{22}} .
\end{align}
where $\delta_{11}, \delta_{12}, \delta_{21}$, and $\delta_{22}$ are in $[0, 1)$. Meanwhile, denote $\Delta_{ij} = 1 - \delta_{ij}$ for all $i,j \in \{1, 2\}$.
\end{assumption}

\begin{assumption}
\label{asm:xi-psi_2}
$\{\xi_k\}_{k \ge 0}$ and $\{\psi_k\}_{k \ge 0}$ are martingale sequences similarly defined in Assumption \ref{asm:xi-psi} with $\delta_{ij} \equiv 1$, in other words a.s.,
\begin{align}
    \mathbb{E}[\Vert \xi_k \Vert ^2 | \mathcal{Q}_{k-1}] &\le \Gamma_{11} \Vert \hat{x}_k \Vert ^ 2 + \Gamma_{12} \Vert \hat{y}_k \Vert ^ 2, \\
    \mathbb{E}[\Vert \psi_k \Vert ^2 | \mathcal{Q}_{k-1}] &\le \Gamma_{21} \Vert \hat{x}_k \Vert ^ 2 + \Gamma_{22} \Vert \hat{y}_k \Vert ^ 2 .
\end{align}
\end{assumption}

Here, we introduce four variables $\{ \delta_{ij} \}$ to demonstrate the relationship between the variances of stochastic terms and two errors. If $\delta_{ij} \equiv 0$, it reduces to the form already studied in \citet{doan2022nonlinear}. However, when the stochastic term is generated from a more general form of stochastic observation, we will have $\delta_{ij} > 0$, which are the so-called state-dependent noise assumptions introduced by \citet{ilandarideva2023accelerated} and \citet{karandikar2023convergence}. Assumptions \ref{asm:xi-psi} and \ref{asm:xi-psi_2} introduce more fundamental assumptions of an overparametrized model studied in \citet{sebbouh2021almost} with respect to two-time-scale convergence. Examples are widely used in Robbins-Monro setting SA, least squares, logistic regression \citep{moulines2011non}, and articles on stochastic gradient descent \citep{chen2020statistical}. 

With $\delta_{ij} > 0$, when $x_k$ and $y_k$ are closer to their targets, the variance of the stochastic term will be lower. Intuitively, this can accelerate the convergence rate compared to the case where $\delta_{ij} \equiv 0$ because the variance decreases. We will theoretically demonstrate that these assumptions accelerate the convergence rate. Additionally, there is a qualitative acceleration when $\delta_{ij} \equiv 1$, where Assumption \ref{asm:xi-psi_2} is sufficient to achieve an exponential convergence rate of $\mathcal{O} \left( e^{-\epsilon k} \right)$. 

It should be noted that we only make separate assumptions about the variance for both $\{ \xi_k \}$ and $\{ \psi_k \}$, respectively. However, we do not impose any limits on the correlation between the two martingale differences.

For simplicity, we only consider the condition under a certain form of coefficients $\alpha_k$ and $\beta_k$, where
\begin{align}
\label{form_of_alpha}
    \alpha_k = \frac{\alpha}{(k + 1 + k_0)^a}, \beta_k = \frac{\beta}{(k + 1 + k_0)^b}.
\end{align}

\begin{assumption}
\label{asm:xi-psi_3}
$\{\xi_k\}_{k \ge 0}$ and $\{\psi_k\}_{k \ge 0}$ are similarly defined in (\ref{12}), and for a fixed $k_0$ there exist constants $\Gamma_{11}', \Gamma_{22}', \gamma_1, \gamma_2 \ge 0$ such that $\gamma_1 - \gamma_2 \in [-1, \frac{1}{2}) $ and variances can be controlled as follows almost surely,
\begin{align}
    \mathbb{E}[\Vert \xi_k \Vert ^2 | \mathcal{Q}_{k-1}] &\le \Gamma_{11}' (k+1+k_0)^{-\gamma_1}, \\
    \mathbb{E}[\Vert \psi_k \Vert ^2 | \mathcal{Q}_{k-1}] &\le \Gamma_{22}' (k+1+k_0)^{-\gamma_2} .
\end{align}
\end{assumption}

In contrast to the former definition, this could be regarded as time-dependent noise assumptions. It represents a special case of the conditions studied in \citet{karandikar2023convergence}. We will also study how the convergence rate changes with respect to $\gamma_i$. Similarly, as iteration $k$ increases, the variance of the update will decrease, potentially accelerating the convergence rate. However, as $\gamma_i \to \infty$, it will not achieve an exponential convergence rate. This reflects an inherent difference between state-independent and time-dependent two-time-scale SA. Note that the condition $\gamma_1 - \gamma_2 \in [-1, \frac{1}{2}) $ is necessary in the proof of Theorem \ref{thm:3} to balance the two different convergence rates.

\subsection{Connection Between Assumptions and Application}

Assumptions \ref{asm:xi-psi_2}--\ref{asm:xi-psi_3}, along with the exponential convergence rate, are intended to demonstrate a specific scenario where fast convergence is achievable and valuable in practical applications of stochastic approximation. Although these assumptions may appear restrictive, they are motivated by and have strong connections with control systems and reinforcement learning, where exponential convergence is essential for efficient operations. For example, recent studies such as \citet{chen2020statistical} incorporate a second-order moment assumption for SGD analysis, and \citet{kaledin2020finite} highlights how contraction-based assumptions can accelerate convergence, even in multi-scale frameworks.

Moreover, Assumption \ref{asm:xi-psi_2} supports various applications in non-linear optimization and two-time-scale stochastic control, where stringent convergence rates facilitate computational efficiency. Practical examples include high-frequency trading algorithms \citet{abergel2016limit, gatheral2011volatility} and adaptive filtering \citet{haykin2002adaptive}, where stability and rapid convergence are critical to performance. Furthermore, recent work on stochastic gradient methods \citet{bottou2018optimization} underscores the importance of exponential rates in improving the training efficiency of complex models. By rigorously analyzing this case, we provide a theoretical benchmark that could inform future research in settings where fast convergence is feasible and advantageous.

\subsection{Lemmas}

In general, we have
\begin{align}
    \hat{x}_{k+1} &= \hat{x}_k - \alpha_k f(x_k, y_k) + \lambda(y_k) - \lambda(y_{k+1}) - \alpha_k \xi_k, \\
    \hat{y}_{k+1} &= \hat{y}_k - \beta_k g(\lambda(y_k), y_k) \notag \\
    &+ \beta_k ( g(\lambda(y_k), y_k) - g(x_k, y_k) ) - \beta_k \psi_k .
\end{align}

Here, we show the upper bounds for $\hat{x}_k$ and $\hat{y}_k$. Prior to this, we propose the following lemmas to provide the recursive relational formulas for errors. Here we denote $a \lesssim b$ when there exists an irrelevant constant $c$ such that $a \le cb$.

\begin{lemma}
\label{lem:1}
Suppose that Assumptions \ref{asm:lambda}--\ref{asm:xi-psi} hold. Let $x_k$ and $y_k$ be updated by (\ref{1}). Then, for all $k \ge 0$, we have
\begin{align}
        &\mathbb{E} [\Vert \hat{x}_{k+1} \Vert ^2 | \mathcal{Q}_{k-1}] - (1 - \mu_f \alpha_k) \Vert \hat{x}_k \Vert ^2 \notag \\
        \lesssim& (\beta_k + \alpha_k^2) \Vert \hat{x}_k \Vert ^2 + (\beta_k + \beta_k^2) \Vert \hat{y}_k \Vert ^2 \notag \\
        &+ \frac{\beta_k^2}{\alpha_k} \mathbb{E}[\Vert \psi_k \Vert^2 | \mathcal{Q}_{k-1}] + \alpha_k^2 \mathbb{E}[\Vert \xi_k \Vert^2 | \mathcal{Q}_{k-1}].
\end{align}
\end{lemma}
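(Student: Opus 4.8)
The plan is to square the residual update for $\hat{x}_{k+1}$, separate it into a part measurable with respect to $\mathcal{Q}_{k-1}$ and a perturbation, and then take the conditional expectation. Writing $u_k := \hat{x}_k - \alpha_k f(x_k, y_k)$ and $v_k := \lambda(y_k) - \lambda(y_{k+1})$, the recursion becomes $\hat{x}_{k+1} = u_k + v_k - \alpha_k \xi_k$, where $u_k$ is $\mathcal{Q}_{k-1}$-measurable while $v_k$ depends on $\psi_k$ through $y_{k+1}$ and is therefore still random after conditioning. Expanding the square gives
\begin{align}
\Vert \hat{x}_{k+1} \Vert^2 &= \Vert u_k \Vert^2 + \Vert v_k \Vert^2 + \alpha_k^2 \Vert \xi_k \Vert^2 + 2\langle u_k, v_k \rangle \notag \\
&\quad - 2\alpha_k \langle u_k, \xi_k \rangle - 2\alpha_k \langle v_k, \xi_k \rangle,
\end{align}
and I would bound each term after applying $\mathbb{E}[\,\cdot\mid\mathcal{Q}_{k-1}]$. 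The term $\mathbb{E}[\langle u_k, \xi_k\rangle\mid\mathcal{Q}_{k-1}]=0$ vanishes because $u_k$ is measurable and $\xi_k$ has conditional mean zero (Assumption \ref{asm:xi-psi}), while the diagonal noise term contributes the $\alpha_k^2 \mathbb{E}[\Vert\xi_k\Vert^2\mid\mathcal{Q}_{k-1}]$ appearing in the statement.

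For $\Vert u_k \Vert^2$ I would use that $f(\lambda(y_k), y_k) = 0$, so strong monotonicity of $f$ (Assumption \ref{asm:f}) yields $\langle \hat{x}_k, f(x_k,y_k)\rangle \ge \mu_f \Vert\hat{x}_k\Vert^2$ and Lipschitz continuity yields $\Vert f(x_k,y_k)\Vert \le L_f \Vert\hat{x}_k\Vert$, whence $\Vert u_k\Vert^2 \le (1 - 2\mu_f\alpha_k + L_f^2\alpha_k^2)\Vert\hat{x}_k\Vert^2$. The crucial feature is the coefficient $1 - 2\mu_f\alpha_k$, which leaves a spare budget of $\mu_f\alpha_k\Vert\hat{x}_k\Vert^2$ to absorb cross terms while still reaching the target coefficient $1 - \mu_f\alpha_k$. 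For $v_k$ I would apply Lipschitz continuity of $\lambda$ to the slow-scale update, $\Vert v_k\Vert \le L_\lambda\beta_k\Vert g(x_k,y_k) + \psi_k\Vert$, and then bound $\Vert g(x_k,y_k)\Vert$ via Lipschitz continuity of $g$ together with $\lambda(y^*)=x^*$ (which follows from the uniqueness in Assumption \ref{asm:lambda}), giving $\Vert g(x_k,y_k)\Vert \lesssim \Vert\hat{x}_k\Vert + \Vert\hat{y}_k\Vert$ and hence $\Vert v_k\Vert^2 \lesssim \beta_k^2(\Vert\hat{x}_k\Vert^2 + \Vert\hat{y}_k\Vert^2) + \beta_k^2\Vert\psi_k\Vert^2$.

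The heart of the argument is the cross term $2\langle u_k, v_k\rangle$. Since $v_k$ carries the $\psi_k$ noise, it does not vanish under conditioning, so I would bound it through $2\Vert u_k\Vert\Vert v_k\Vert$ using $\Vert u_k\Vert \lesssim \Vert\hat{x}_k\Vert$ and the estimate on $\Vert v_k\Vert$ above. The products $\beta_k\Vert\hat{x}_k\Vert^2$ and $\beta_k\Vert\hat{x}_k\Vert\Vert\hat{y}_k\Vert$ are absorbed into $(\beta_k+\alpha_k^2)\Vert\hat{x}_k\Vert^2$ and $(\beta_k+\beta_k^2)\Vert\hat{y}_k\Vert^2$ by Young's inequality. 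The remaining product $\beta_k\Vert\hat{x}_k\Vert\Vert\psi_k\Vert$ is where the coefficient $\beta_k^2/\alpha_k$ originates: applying Young's inequality with a weight proportional to $\alpha_k$,
\begin{align}
\beta_k\Vert\hat{x}_k\Vert\Vert\psi_k\Vert \le \tfrac{c\alpha_k}{2}\Vert\hat{x}_k\Vert^2 + \tfrac{\beta_k^2}{2c\alpha_k}\Vert\psi_k\Vert^2,
\end{align}
and choosing $c \le \mu_f$ lets the first term be swallowed by the spare monotonicity budget, while the second produces exactly the $\frac{\beta_k^2}{\alpha_k}\mathbb{E}[\Vert\psi_k\Vert^2\mid\mathcal{Q}_{k-1}]$ term. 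Finally, because the paper allows arbitrary correlation between $\xi_k$ and $\psi_k$, the term $\langle v_k,\xi_k\rangle$ cannot be treated as mean zero; I would instead use Cauchy--Schwarz and $2\alpha_k\Vert v_k\Vert\Vert\xi_k\Vert \le \Vert v_k\Vert^2 + \alpha_k^2\Vert\xi_k\Vert^2$, feeding both pieces back into the bounds already obtained.

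I expect the main obstacle to be the bookkeeping around the non-measurable term $v_k = \lambda(y_k)-\lambda(y_{k+1})$: one must simultaneously extract the $\beta_k$ scaling from the slow-scale update, choose the Young weight proportional to $\alpha_k$ so that the induced $\Vert\hat{x}_k\Vert^2$ contribution fits inside the $\mu_f\alpha_k$ budget left by strong monotonicity, and avoid relying on any independence between $\xi_k$ and $\psi_k$. Getting this weight right is precisely what forces the $\beta_k^2/\alpha_k$ factor rather than a naive $\beta_k^2$, reflecting the two-time-scale coupling $\alpha_k \gg \beta_k$.
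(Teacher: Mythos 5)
Your proposal is correct and follows essentially the same route as the paper's proof: the same decomposition into $u_k$, $v_k$, and the $\xi_k$ term, the same use of strong monotonicity to create the $\mu_f\alpha_k\Vert\hat{x}_k\Vert^2$ budget, and the same Young's inequality with weight proportional to $\alpha_k$ on the $\beta_k\Vert\hat{x}_k\Vert\Vert\psi_k\Vert$ cross term to produce the $\beta_k^2/\alpha_k$ coefficient. The only cosmetic difference is that the paper groups $\lambda(y_k)-\lambda(y_{k+1})-\alpha_k\xi_k$ into a single squared norm bounded by $2\Vert v_k\Vert^2+2\alpha_k^2\Vert\xi_k\Vert^2$, whereas you expand the $\langle v_k,\xi_k\rangle$ cross term and apply Cauchy--Schwarz, which is equivalent.
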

\begin{proof}
    We provide a detailed statement and proof in Appendix A.1 of the supplementary material.
\end{proof}

\begin{lemma}
\label{lem:2}
Suppose that Assumptions \ref{asm:lambda}--\ref{asm:xi-psi} hold. Let $x_k$ and $y_k$ be generated by (\ref{1}). Then, for all $k \ge 0$, we have
\begin{align}
    &\mathbb{E} [\Vert \hat{y}_{k+1} \Vert ^2 |\mathcal{Q}_{k-1}] - (1 - \mu_g \beta_k ) \Vert \hat{y}_k \Vert ^2 \notag \\
    \lesssim& (\beta_k + \beta_k^2) \Vert \hat{x}_k \Vert ^2 + \beta_k^2 \Vert \hat{y}_k \Vert ^2 +  \beta_k^2 \mathbb{E} [\Vert \psi_k \Vert ^2 | \mathcal{Q}_{k-1}].
\end{align}
\end{lemma}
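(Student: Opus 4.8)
The plan is to start from the residual recursion for $\hat{y}_{k+1}$ already recorded in the excerpt and group its right-hand side into three pieces: a \emph{contraction} term $A := \hat{y}_k - \beta_k g(\lambda(y_k),y_k)$, a \emph{coupling} term $B := \beta_k\big(g(\lambda(y_k),y_k)-g(x_k,y_k)\big)$, and the \emph{noise} term $C := -\beta_k\psi_k$, so that $\hat{y}_{k+1}=A+B+C$. Expanding $\Vert A+B+C\Vert^2$ and taking $\mathbb{E}[\,\cdot\mid\mathcal{Q}_{k-1}]$, note that $x_k,y_k$ (hence $A$ and $B$) are $\mathcal{Q}_{k-1}$-measurable while $\mathbb{E}[\psi_k\mid\mathcal{Q}_{k-1}]=0$ by Assumption~\ref{asm:xi-psi}; therefore the cross terms $2\langle A,C\rangle$ and $2\langle B,C\rangle$ vanish in conditional expectation, and the noise contributes exactly $\beta_k^2\,\mathbb{E}[\Vert\psi_k\Vert^2\mid\mathcal{Q}_{k-1}]$. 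What then remains is to bound the deterministic quantity $\Vert A\Vert^2+\Vert B\Vert^2+2\langle A,B\rangle$.

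For the contraction term I would expand $\Vert A\Vert^2=\Vert\hat{y}_k\Vert^2-2\beta_k\langle\hat{y}_k,g(\lambda(y_k),y_k)\rangle+\beta_k^2\Vert g(\lambda(y_k),y_k)\Vert^2$. The middle inner product is handled by the $1$-point strong monotonicity of $g$ in Assumption~\ref{asm:g}, which (since $\hat{y}_k=y_k-y^*$) gives $\langle\hat{y}_k,g(\lambda(y_k),y_k)\rangle\ge\mu_g\Vert\hat{y}_k\Vert^2$ and hence the crucial $-2\mu_g\beta_k\Vert\hat{y}_k\Vert^2$. For the last term I would use that $\lambda(y^*)=x^*$ (uniqueness in Assumption~\ref{asm:lambda}), so that $g(\lambda(y^*),y^*)=g(x^*,y^*)=0$; combining Lipschitz continuity of $g$ with that of $\lambda$ then yields $\Vert g(\lambda(y_k),y_k)\Vert\le L_g(1+L_\lambda)\Vert\hat{y}_k\Vert$, whence $\beta_k^2\Vert g(\lambda(y_k),y_k)\Vert^2\lesssim\beta_k^2\Vert\hat{y}_k\Vert^2$. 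Thus $\Vert A\Vert^2\le(1-2\mu_g\beta_k)\Vert\hat{y}_k\Vert^2+\mathcal{O}(\beta_k^2)\Vert\hat{y}_k\Vert^2$.

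The coupling term is immediate: Lipschitz continuity of $g$ gives $\Vert g(\lambda(y_k),y_k)-g(x_k,y_k)\Vert\le L_g\Vert\hat{x}_k\Vert$, so $\Vert B\Vert^2\lesssim\beta_k^2\Vert\hat{x}_k\Vert^2$. The delicate step, and the main obstacle, is the cross term $2\langle A,B\rangle$, whose dominant part is $2\beta_k\langle\hat{y}_k,g(\lambda(y_k),y_k)-g(x_k,y_k)\rangle$. Bounding it by Cauchy--Schwarz and Lipschitz gives $2L_g\beta_k\Vert\hat{y}_k\Vert\,\Vert\hat{x}_k\Vert$; a naive Young split would produce an $\mathcal{O}(\beta_k)\Vert\hat{y}_k\Vert^2$ term that destroys the contraction factor. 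Instead I would apply Young's inequality with a weight tuned to the monotonicity budget,
\begin{align}
2L_g\beta_k\Vert\hat{y}_k\Vert\,\Vert\hat{x}_k\Vert\le\mu_g\beta_k\Vert\hat{y}_k\Vert^2+\frac{L_g^2}{\mu_g}\beta_k\Vert\hat{x}_k\Vert^2,
\end{align}
so the $\Vert\hat{y}_k\Vert^2$ contribution is exactly $\mu_g\beta_k\Vert\hat{y}_k\Vert^2$ and is absorbed into the $-2\mu_g\beta_k\Vert\hat{y}_k\Vert^2$ supplied by the contraction term, while the coupling feeds only an $\mathcal{O}(\beta_k)\Vert\hat{x}_k\Vert^2$ term. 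The remaining $\mathcal{O}(\beta_k^2)$ piece of $2\langle A,B\rangle$, namely $-2\beta_k^2\langle g(\lambda(y_k),y_k),g(\lambda(y_k),y_k)-g(x_k,y_k)\rangle$, is of order $\beta_k^2\Vert\hat{y}_k\Vert\Vert\hat{x}_k\Vert$ and, by a further Young step, splits into $\mathcal{O}(\beta_k^2)(\Vert\hat{x}_k\Vert^2+\Vert\hat{y}_k\Vert^2)$.

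Collecting everything, the coefficient of $\Vert\hat{y}_k\Vert^2$ becomes $(1-2\mu_g\beta_k)+\mu_g\beta_k+\mathcal{O}(\beta_k^2)=(1-\mu_g\beta_k)+\mathcal{O}(\beta_k^2)$, which after moving $(1-\mu_g\beta_k)\Vert\hat{y}_k\Vert^2$ to the left leaves exactly the claimed $\beta_k^2\Vert\hat{y}_k\Vert^2$ residual; the $\Vert\hat{x}_k\Vert^2$ terms total $\mathcal{O}(\beta_k+\beta_k^2)$; and the noise term is precisely $\beta_k^2\mathbb{E}[\Vert\psi_k\Vert^2\mid\mathcal{Q}_{k-1}]$. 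I expect the only genuine subtlety to be this weighted Young split, since every other estimate is a direct application of Assumptions~\ref{asm:lambda} and~\ref{asm:g}; in particular, because the $\hat{y}$-update never involves $\alpha_k$ or $\xi_k$, no fast-scale noise enters, consistent with the absence of an $\mathbb{E}[\Vert\xi_k\Vert^2]$ term in the statement.
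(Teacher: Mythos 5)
Your proposal is correct and follows essentially the same route as the paper's proof: the same decomposition $\hat{y}_{k+1}=A+B+C$, the same use of $g(\lambda(y^*),y^*)=0$ with the Lipschitz bounds, the vanishing of the noise cross terms by the martingale property, and the same weighted Young split $2L_g\beta_k\Vert\hat{y}_k\Vert\Vert\hat{x}_k\Vert\le\mu_g\beta_k\Vert\hat{y}_k\Vert^2+\frac{L_g^2}{\mu_g}\beta_k\Vert\hat{x}_k\Vert^2$ that preserves half of the contraction. The paper merely groups $B$ and $C$ together before expanding, which is an immaterial difference.
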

\begin{proof}
    We provide a detailed statement and proof in Appendix A.2 of the supplementary material.
\end{proof}

These two lemmas provide a well-controlled upper bound on the expected error between two adjacent iterations, establishing our basic foundation. We also observe that the two main terms that affect the convergence rate are $\alpha_k \Vert \hat{x}_k \Vert ^2$ and $\beta_k \Vert \hat{y}_k \Vert ^2$. These terms play an important role when considering the Lyapunov function stated below.

We define $c = 4\frac{L_g^2}{\mu_f \mu_g}$ and the Lyapunov function here, denote it as $V_k$, is:
\begin{align}
    V_k = V(\hat{x}_k, \hat{y}_k) = c \frac{\beta_k}{\alpha_k} \Vert \hat{x}_k \Vert^2 + \Vert \hat{y}_k \Vert^2.
\end{align}
An insightful observation is that we need to balance two different variables, $x_k$ and $y_k$. This will lead to the aforementioned Lyapunov function $V_k$.

Combining the above two lemmas allows us to analyze the update of the Lyapunov function in the next lemma. It is important to note that the difference between two iterations depends primarily on the magnitude of $\beta_k$ and the distinct forms of $\mathbb{E}[\Vert \xi_k \Vert^2]$ and $\mathbb{E} [\Vert \psi_k \Vert ^2]$.

\begin{lemma}
\label{lem:3}
    Suppose that Assumptions \ref{asm:lambda}--\ref{asm:xi-psi} hold. Let $x_k$ and $y_k$ be generated by (\ref{1}). Suppose $\alpha_k$ and $\beta_k$ decrease (not necessarily strictly), and $\frac{\alpha_k}{\beta_k}$ is sufficiently large. With a constant $C(\alpha_0, \beta_0)$ generated by $\alpha_0$ and $\beta_0$, we have
    \begin{align}
        &\mathbb{E} [ V_{k+1} ] - ( 1 - \frac{1}{2} \mu_g \beta_k ) \mathbb{E} [ V_k ] \lesssim  C(\alpha_0, \beta_0) \alpha_k^2 \mathbb{E} [ V_k ] \notag \\
        &+ \alpha_k \beta_k \mathbb{E}[\Vert \xi_k \Vert^2] + \left( \frac{\beta_k^3}{\alpha_k^2} + \beta_k^2 \right) \mathbb{E} [\Vert \psi_k \Vert ^2].
    \end{align}
\end{lemma}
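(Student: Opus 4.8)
The plan is to form $V_{k+1}$ by combining Lemmas~\ref{lem:1} and~\ref{lem:2} linearly: weight the bound for $\mathbb{E}[\Vert\hat{x}_{k+1}\Vert^2 | \mathcal{Q}_{k-1}]$ by $c\beta_k/\alpha_k$ and the bound for $\mathbb{E}[\Vert\hat{y}_{k+1}\Vert^2 | \mathcal{Q}_{k-1}]$ by $1$, so that the two diagonal factors $1-\mu_f\alpha_k$ and $1-\mu_g\beta_k$ reassemble into the claimed contraction of $V_k$. First I would peel off the time-varying weight: under the polynomial choice \eqref{form_of_alpha} with $b\ge a$ (forced by the scale separation that $\alpha_k/\beta_k$ be large), the ratio $\beta_k/\alpha_k$ is non-increasing, so $\beta_{k+1}/\alpha_{k+1}\le\beta_k/\alpha_k$ and hence
\[
  V_{k+1}=c\tfrac{\beta_{k+1}}{\alpha_{k+1}}\Vert\hat{x}_{k+1}\Vert^2+\Vert\hat{y}_{k+1}\Vert^2\le c\tfrac{\beta_k}{\alpha_k}\Vert\hat{x}_{k+1}\Vert^2+\Vert\hat{y}_{k+1}\Vert^2 .
\]
Since $\hat{x}_k,\hat{y}_k$ are $\mathcal{Q}_{k-1}$-measurable, taking $\mathbb{E}[\,\cdot | \mathcal{Q}_{k-1}]$ and substituting the two lemmas yields a bound that is a quadratic form in $\Vert\hat{x}_k\Vert^2,\Vert\hat{y}_k\Vert^2$ plus the noise contributions; writing $C_1,C_2$ for the implied constants in the $\lesssim$ of Lemmas~\ref{lem:1} and~\ref{lem:2}, the task reduces to controlling the two collected coefficients.

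The crux is the coefficient of $\Vert\hat{x}_k\Vert^2$. Measured relative to the weight $c\beta_k/\alpha_k$ that $\hat{x}_k$ carries in $V_k$, Lemma~\ref{lem:1} supplies decay of relative size $\mu_f\alpha_k$, whereas the cross term $C_2\beta_k\Vert\hat{x}_k\Vert^2$ that Lemma~\ref{lem:2} injects through the slow-scale recursion has relative size $\tfrac{C_2}{c}\alpha_k$ after dividing by $c\beta_k/\alpha_k$. Both are of order $\alpha_k$, so the sign of the net coefficient is governed by whether $\mu_f>C_2/c$. This is exactly the role of the prescribed constant $c=4L_g^2/(\mu_f\mu_g)$: it forces $c\mu_f=4L_g^2/\mu_g$ to exceed the cross-constant $C_2\sim L_g^2/\mu_g$, so a net relative decay of order $\alpha_k$ survives---much stronger than the $\tfrac12\mu_g\beta_k$ demanded by the target---and the remaining off-diagonal pieces ($cC_1\beta_k^2/\alpha_k$, $cC_1\alpha_k\beta_k$, $C_2\beta_k^2$) are of strictly lower order and get charged to the slack $C(\alpha_0,\beta_0)\alpha_k^2\mathbb{E}[V_k]$. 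The coefficient of $\Vert\hat{y}_k\Vert^2$ is then symmetric but easier: Lemma~\ref{lem:2} gives the decay $1-\mu_g\beta_k$, and the reverse cross term fed back by Lemma~\ref{lem:1}, of size $cC_1\beta_k^2/\alpha_k=cC_1\beta_k\cdot(\beta_k/\alpha_k)$, stays below $\tfrac12\mu_g\beta_k$ purely by the smallness of $\beta_k/\alpha_k$, leaving the factor $1-\tfrac12\mu_g\beta_k$.

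The noise terms line up directly. Weighting Lemma~\ref{lem:1} by $c\beta_k/\alpha_k$ turns $\alpha_k^2\mathbb{E}[\Vert\xi_k\Vert^2 | \mathcal{Q}_{k-1}]$ and $\tfrac{\beta_k^2}{\alpha_k}\mathbb{E}[\Vert\psi_k\Vert^2 | \mathcal{Q}_{k-1}]$ into $c\alpha_k\beta_k\mathbb{E}[\Vert\xi_k\Vert^2 | \mathcal{Q}_{k-1}]$ and $c\tfrac{\beta_k^3}{\alpha_k^2}\mathbb{E}[\Vert\psi_k\Vert^2 | \mathcal{Q}_{k-1}]$, while Lemma~\ref{lem:2} contributes $C_2\beta_k^2\mathbb{E}[\Vert\psi_k\Vert^2 | \mathcal{Q}_{k-1}]$; adding them reproduces $\alpha_k\beta_k\mathbb{E}[\Vert\xi_k\Vert^2]$ and $(\tfrac{\beta_k^3}{\alpha_k^2}+\beta_k^2)\mathbb{E}[\Vert\psi_k\Vert^2]$. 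I would finish by applying $\mathbb{E}[\,\cdot\,]$ to the conditional inequality and invoking the tower property, which converts the conditional variances into unconditional ones and, through the definition of $V_k$, repackages the $\Vert\hat{x}_k\Vert^2$ and $\Vert\hat{y}_k\Vert^2$ terms as $\mathbb{E}[V_k]$.

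The hard part will be the $\Vert\hat{x}_k\Vert^2$-coefficient bookkeeping, since it is the only place where the two scales genuinely interact: the slow-scale cross term enters the fast-scale weight amplified by the factor $\alpha_k/\beta_k$, so a careless bound destroys the contraction. The resolution hinges on committing to $c=4L_g^2/(\mu_f\mu_g)$ before estimating, and on making ``$\alpha_k/\beta_k$ sufficiently large'' quantitative---large enough that $cC_1\beta_k/\alpha_k\le\tfrac14\mu_g$ and the monotonicity $\beta_{k+1}/\alpha_{k+1}\le\beta_k/\alpha_k$ both hold from $k=0$ onward. These two requirements fix the admissible $\alpha_0,\beta_0$ and thereby pin down the constant $C(\alpha_0,\beta_0)$ in the statement.
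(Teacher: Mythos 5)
Your proposal is correct and follows essentially the same route as the paper: weight Lemma~\ref{lem:1} by $c\beta_k/\alpha_k$, add Lemma~\ref{lem:2}, use $c=4L_g^2/(\mu_f\mu_g)$ so that the weighted fast-scale decay $c\mu_f\beta_k$ dominates the $L_g^2\beta_k/\mu_g$ cross term, absorb the remaining $\mathcal{O}(\beta_k^2/\alpha_k)$ feedback into $\tfrac12\mu_g\beta_k$ via the largeness of $\alpha_k/\beta_k$, and park the rest in $C(\alpha_0,\beta_0)\alpha_k^2\mathbb{E}[V_k]$. One bookkeeping caveat: the weighted piece $c\,\mathcal{O}(\beta_k^2/\alpha_k)\Vert\hat{x}_k\Vert^2$ is of relative size $\mathcal{O}(\beta_k)\not\lesssim\alpha_k^2$, so it must be absorbed into the surviving order-$\alpha_k$ relative decay (as your own net-decay observation permits, and as the paper does) rather than into the $\alpha_k^2$ slack.
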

\begin{proof}
    We provide a detailed statement and proof in Appendix A.3 of the supplementary material.
\end{proof}

\section{Convergence Under State-Dependent Noise Assumptions}
\label{sec:convergence_state}

Now we consider the convergence results under certain forms of coefficients $\alpha_k$ and $\beta_k$ as in (\ref{form_of_alpha}). These step sizes form an algebraic fraction with respect to $k$ with a translation to ensure a moderate update rate. Here, we denote $a \land b = \min \{ a, b \}$ and $a \lor b = \max \{ a, b \}$. In Assumption \ref{asm:xi-psi}, we define a linear programming function
\begin{align}
    m(x) =& \frac{(1+\delta_{11}) x - \delta_{11}}{1 - \delta_{11}} \land \frac{x}{1 - \delta_{12}} \notag \\
    &\land \frac{(2 - \delta_{21}) (1-x) }{1 - \delta_{21}} \land \frac{2(1-x)}{ 1 - \delta_{22}} ,
\end{align}
which is set to be the minimum of four linear functions. $m(x)$ depending on all four parameters $\delta_{ij}$ is a delicate function set to satisfy
\begin{align}
    & -1 + 2a \notag \\
    \ge& -1+a+t+(1-a-t)\delta_{11} \lor -1+a+t-t\delta_{12} \notag \\
    &\lor -3+4a+t+(1-a-t)\delta_{21} \lor -3+4a+t-t\delta_{22},
\end{align}
where $a=\argmax_{x \in (\frac{1}{2}, 1]} m(x)$ and $t = \max_{x \in (\frac{1}{2}, 1]} m(x)$. This balances all terms with different orders in the proof and achieves the maximum convergence rate of $\mathcal{O}(k^{-t})$. Here, we draw the following conclusion.

\begin{thm}
\label{thm:1}
    Suppose that Assumptions \ref{asm:lambda}--\ref{asm:xi-psi} hold. Let $x_k$ and $y_k$ be generated by (\ref{1}). We assume
    \begin{gather}
        b=1 , a=\argmax_{x \in (\frac{1}{2}, 1]} m(x) , t = \max_{x \in (\frac{1}{2}, 1]} m(x). 
    \end{gather}
    In addition, $\beta$ and $\frac{\alpha}{\beta}$ are sufficiently large. Let $C_{\alpha\beta} = C(\alpha, \beta) \ge C(\alpha_0, \beta_0)$ defined in Lemma \ref{lem:3} and set $k_0$ as large enough. There exist two functions generated by $M$ with $C_1(M) = \mathcal{O} (M)$ and $C_2(M) = \mathcal{O} \left( M^{\delta_{11} \lor \delta_{12} \lor \delta_{21} \lor \delta_{22}} \right)$ as $M \to +\infty$.
    Note that $\delta_{ij} < 1$, there exists a $M$ s.t.
    \begin{gather}
        M \ge 3 k_0^t V_0 \lor \frac{3}{a} C_2(M) .
    \end{gather}
    Together, we have $\forall k \ge 0$,
    \begin{align}
        \mathbb{E} [ V_k ] \le \frac{M}{(k + k_0)^t} .
    \end{align}
\end{thm}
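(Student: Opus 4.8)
The plan is to establish $\mathbb{E}[V_k] \le M/(k+k_0)^t$ by induction on $k$, with the one-step inequality of Lemma~\ref{lem:3} as the engine. The base case $k=0$ reads $V_0 \le M/k_0^t$, which is exactly guaranteed by the hypothesis $M \ge 3k_0^t V_0$. For the inductive step I assume $\mathbb{E}[V_k] \le M/(k+k_0)^t$ and propagate it to $k+1$; throughout I abbreviate $n=k+k_0$, so that $\beta_k = \beta/(n+1)$ since $b=1$.

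First I would feed the state-dependent bounds of Assumption~\ref{asm:xi-psi} into Lemma~\ref{lem:3}. The key conversion is that the Lyapunov function dominates both residuals: from $V_k = c(\beta_k/\alpha_k)\Vert \hat{x}_k\Vert^2 + \Vert \hat{y}_k\Vert^2$ one has $\Vert \hat{x}_k\Vert^2 \le (\alpha_k/c\beta_k)V_k$ and $\Vert \hat{y}_k\Vert^2 \le V_k$, hence $\Vert \hat{x}_k\Vert^{2\delta_{ij}} \le (\alpha_k/c\beta_k)^{\delta_{ij}}V_k^{\delta_{ij}}$ and $\Vert \hat{y}_k\Vert^{2\delta_{ij}} \le V_k^{\delta_{ij}}$. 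Taking full expectations (tower property) and using Jensen's inequality $\mathbb{E}[V_k^{\delta}] \le (\mathbb{E}[V_k])^{\delta}$ (valid because $\delta_{ij}<1$ makes $x\mapsto x^\delta$ concave) together with the inductive hypothesis turns every noise contribution into an explicit power of $(n+1)$ times a power of $M$. I expect to reach a recursion of the schematic form
\[
\mathbb{E}[V_{k+1}] \le \Big(1 - \tfrac12 \mu_g \beta_k\Big)\frac{M}{n^t} + \frac{C_1(M)}{(n+1)^{2a+t}} + \frac{C_2(M)}{(n+1)^{1+t}},
\]
where $C_1(M)=\mathcal{O}(M)$ collects the deterministic term $C\alpha_k^2\mathbb{E}[V_k]$ and $C_2(M)=\mathcal{O}\big(M^{\delta_{11}\lor\delta_{12}\lor\delta_{21}\lor\delta_{22}}\big)$ collects the four noise terms.

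The heart of the argument is the exponent bookkeeping that justifies those two error exponents. Writing out the order in $(n+1)$ of each noise contribution $\alpha_k\beta_k\Gamma_{11}\Vert\hat x_k\Vert^{2\delta_{11}}$, $\alpha_k\beta_k\Gamma_{12}\Vert\hat y_k\Vert^{2\delta_{12}}$, $(\beta_k^3/\alpha_k^2)\Gamma_{21}\Vert\hat x_k\Vert^{2\delta_{21}}$, $(\beta_k^3/\alpha_k^2)\Gamma_{22}\Vert\hat y_k\Vert^{2\delta_{22}}$, and demanding that each be dominated by the decrease term $\tfrac12\mu_g\beta_k\mathbb{E}[V_k]$ of order $-(1+t)$, I obtain, after substituting $b=1$ and $\alpha_k/\beta_k\sim n^{1-a}$, exactly the four linear inequalities $t\le\tfrac{(1+\delta_{11})a-\delta_{11}}{1-\delta_{11}}$, $t\le\tfrac{a}{1-\delta_{12}}$, $t\le\tfrac{(2-\delta_{21})(1-a)}{1-\delta_{21}}$, $t\le\tfrac{2(1-a)}{1-\delta_{22}}$, i.e. $t\le m(a)$. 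Choosing $t=\max_{x\in(1/2,1]}m(x)$ at $a=\argmax m$ satisfies all four simultaneously at the largest feasible rate, which is precisely the linear-programming content of $m$ and of the displayed balancing condition. Here I would also record that because $a>1/2$ the ratio $\beta_k/\alpha_k^2\sim n^{2a-1}$ grows, so the $\beta_k^3/\alpha_k^2$ terms dominate the $\beta_k^2$ terms and only the former are binding, and likewise the deterministic exponent $2a+t$ is strictly larger than $1+t$.

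Finally I would close the induction with the Bernoulli step. Multiplying the schematic recursion by $(n+1)^t$ and controlling $(n/(n+1))^t=(1-\tfrac{1}{n+1})^t$ via Bernoulli's inequality yields $M/(n+1)^t \ge \big(1 - \tfrac{t}{n+1} - \mathcal{O}(n^{-2})\big)M/n^t$, so it suffices to show the margin $\tfrac12\mu_g\beta_k - \tfrac{t}{n+1}$ leaves room to absorb the two error terms; after clearing one factor of $(n+1)$ this becomes $C_1(M)(n+1)^{-(2a-1)} + C_2(M) \le (\tfrac12\mu_g\beta - t)M$. The first summand is killed by taking $k_0$ large (since $2a-1>0$), the second by the hypothesis $M\ge\tfrac{3}{a}C_2(M)$, which is self-consistent because $C_2(M)/M=\mathcal{O}(M^{\max_{ij}\delta_{ij}-1})\to 0$; requiring $\beta$ large makes $\mu_g\beta/2 - t$ positive and provides the budget, which is where ``$\beta$ sufficiently large'' enters. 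The main obstacle I anticipate is exactly this simultaneous balancing: one must pick a single pair $(a,t)$ that aligns all four noise exponents and the deterministic exponent against the single decrease rate, while keeping the choice of $M$ consistent even though $M$ appears on both sides through $C_2(M)$ --- the function $m(x)$, the restriction $x\in(1/2,1]$, and the condition $M\ge\tfrac{3}{a}C_2(M)$ are engineered precisely to make this closure possible.
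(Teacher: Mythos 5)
Your proposal follows essentially the same route as the paper's proof: both feed Assumption 4 into Lemma 3 via Jensen's inequality and the bounds $\Vert\hat{x}_k\Vert^2 \le \frac{\alpha_k}{c\beta_k}V_k$, $\Vert\hat{y}_k\Vert^2 \le V_k$, derive exactly the four exponent constraints encoded in $m(x)$, and close an induction with a Bernoulli-type step. The only cosmetic difference is in the closure: the paper multiplies the one-step recursion by $(k+1+k_0)^{2a+t}$ and telescopes (which is where the factor $\frac{1}{a}$ in $M \ge \frac{3}{a}C_2(M)$ and the condition $\beta \ge \frac{2}{\mu_g}(2a+t)$ come from, and which keeps the Bernoulli exponent $2a+t \ge 1$ so no second-order correction is needed), whereas you absorb the error terms into the decrease margin at each step, using an $\mathcal{O}(n^{-2})$-corrected Bernoulli bound to cover the case $t<1$; both closures work.
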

\begin{proof}
    We provide a detailed statement and proof in Appendix A.4 of the supplementary material.
\end{proof}

Theorem \ref{thm:1} provides a general view of how the convergence rate relates to the parameters $\delta_{ij}$ through a linear programming function $m(x)$. To better understand the implications of Theorem \ref{thm:1}, particularly regarding the behavior of $m(x)$, we provide two enlightening special cases.

\begin{corollary}
\label{cor:1}
    Suppose that Assumptions \ref{asm:lambda}--\ref{asm:xi-psi} hold. Let $x_k$ and $y_k$ be generated by (\ref{1}). Moreover, we assume $\delta_{11} = \delta_{12}$ and $\delta_{21} = \delta_{22}$. Recall that $\Delta_{ij} = 1 - \delta_{ij}$, then under the condition of Theorem \ref{thm:1}, 
    \begin{align}
        a = \frac{\Delta_{11} + \Delta_{22}}{ \Delta_{11} + 2 \Delta_{22}}, t = \frac{1 + \Delta_{22}}{\Delta_{11} + 2 \Delta_{22}}.
    \end{align}
    There exist $\alpha, \beta, k_0$, and $M$, s.t. $\forall k \ge 0$,
    \begin{align}
        \mathbb{E} [ V_k ] \le \frac{M}{(k + k_0)^t} .
    \end{align}
\end{corollary}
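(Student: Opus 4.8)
The plan is to specialize the linear-programming function $m(x)$ to the symmetric case $\delta_{11}=\delta_{12}$, $\delta_{21}=\delta_{22}$ (equivalently $\Delta_{11}=\Delta_{12}$ and $\Delta_{21}=\Delta_{22}$), compute the maximizer $a$ and the value $t=m(a)$ in closed form, verify that $a$ lies in the interior of $(\tfrac12,1]$ so that the hypotheses of Theorem \ref{thm:1} are met, and then invoke Theorem \ref{thm:1} directly to obtain the stated bound. The entire content of the corollary is therefore the explicit evaluation of the four-piece minimum defining $m$.

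First I would reduce the four-way minimum to a two-way minimum by showing that two of the four affine pieces are dominated on $(\tfrac12,1]$. Write $\phi_1,\phi_2,\phi_3,\phi_4$ for the four linear functions in the definition of $m$, in order. The pieces $\phi_1$ and $\phi_2$ share the denominator $1-\delta_{11}=\Delta_{11}$, and subtracting numerators gives $[(1+\delta_{11})x-\delta_{11}]-x=\delta_{11}(x-1)\le 0$ for $x\le 1$, so $\phi_1\le\phi_2$ throughout and $\phi_2$ is inactive. Likewise $\phi_3$ and $\phi_4$ share the denominator $\Delta_{22}$ (using $\Delta_{21}=\Delta_{22}$), and $(2-\delta_{21})(1-x)-2(1-x)=-\delta_{21}(1-x)\le 0$, so $\phi_3\le\phi_4$ and $\phi_4$ is inactive. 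Hence on $(\tfrac12,1]$ we have $m(x)=\phi_1(x)\land\phi_3(x)$, where $\phi_1$ is strictly increasing (slope $(1+\delta_{11})/\Delta_{11}>0$) and $\phi_3$ strictly decreasing (slope $-(2-\delta_{21})/\Delta_{22}<0$).

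Next I would locate the maximum. Since the minimum of an increasing and a decreasing affine function is maximized at their crossing, I solve $\phi_1(x)=\phi_3(x)$. Clearing denominators and using the identities $1+\delta_{11}=2-\Delta_{11}$ and $2-\delta_{21}=1+\Delta_{22}$, the coefficient of $x$ collapses to $\Delta_{11}+2\Delta_{22}$ and the constant term to $\Delta_{11}+\Delta_{22}$, giving $a=(\Delta_{11}+\Delta_{22})/(\Delta_{11}+2\Delta_{22})$. Substituting $1-a=\Delta_{22}/(\Delta_{11}+2\Delta_{22})$ into $\phi_3$ then yields $t=\phi_3(a)=(1+\Delta_{22})/(\Delta_{11}+2\Delta_{22})$, the two claimed values.

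Finally I would confirm that this maximizer is admissible: $a>\tfrac12\iff\Delta_{11}>0$ and $a<1\iff\Delta_{22}>0$, both of which hold because $\delta_{ij}<1$ forces $\Delta_{ij}>0$. Thus the crossing is interior to $(\tfrac12,1]$, so $a=\argmax_{x\in(1/2,1]}m(x)$ and $t=\max_{x\in(1/2,1]}m(x)$ exactly as required by Theorem \ref{thm:1}; applying that theorem with $b=1$ and these $a,t$ produces $\alpha,\beta,k_0,M$ with $\mathbb{E}[V_k]\le M/(k+k_0)^t$. The only non-mechanical step is the domination argument that eliminates $\phi_2$ and $\phi_4$; once the minimum is recognized as a single increasing/decreasing pair, the rest is the routine algebra of intersecting two lines, and as a sanity check the Doan baseline $\delta_{ij}\equiv 0$ gives $\Delta_{11}=\Delta_{22}=1$ and hence $a=t=\tfrac23$.
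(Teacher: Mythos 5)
Your proposal is correct and follows essentially the same route as the paper's proof: both eliminate the second and fourth pieces of $m$ by the same domination inequalities (valid since $\delta_{11}=\delta_{12}$ and $\delta_{21}=\delta_{22}$), locate the maximum at the crossing of the remaining increasing and decreasing affine pieces to obtain the stated $a$ and $t$, and then invoke Theorem \ref{thm:1}. Your additional checks that the crossing lies in $(\tfrac12,1]$ and the sanity check $a=t=\tfrac23$ at $\delta_{ij}\equiv 0$ are welcome elaborations of steps the paper leaves implicit.
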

\begin{proof}
    We provide a detailed statement and proof in Appendix A.5 of the supplementary material.
\end{proof}

It is evident that under the assumption that $\delta_{11} = \delta_{12}$ and $\delta_{21} = \delta_{22}$, if $\delta_{11} \neq 0$, we always have $a < t$. This implies that the condition $\delta_{ij} > 0$ indeed accelerates the convergence rate to be faster than $\mathcal{O} \left( k^{-a} \right)$, which is not demonstrated in earlier studies. Furthermore, when $\delta_{ij} = 0 ( \Delta_{ij} = 1 )$, we find that $a = t = \frac{2}{3}$. This is consistent with the results in \citet{doan2022nonlinear}.

Moreover, this bridges the gap between stochastic and deterministic two-time-scale SA. The latter is well-known for achieving at least an exponential convergence rate of $\mathcal{O} \left( e^{-\epsilon k} \right)$. As $\delta_{ij} \to 1^- ( \Delta_{ij} \to 0^+ )$, we observe that $t \to +\infty$, which naturally leads to the following result.

Now we investigate the case in Assumption \ref{asm:xi-psi_2} with constant learning rates $\alpha_k = \alpha$ and $\beta_k = \beta$. We define $c = 4\frac{L_g^2}{\mu_f \mu_g}$. The Lyapunov function is defined as follows and denoted as $V_k$,
\begin{align}
    V_k = V(\hat{x}_k, \hat{y}_k) = c \frac{\beta}{\alpha} \Vert \hat{x}_k \Vert^2 + \Vert \hat{y}_k \Vert^2.
\end{align}

\begin{thm}
\label{thm:2}
    Suppose that Assumptions \ref{asm:lambda}--\ref{asm:g},\ref{asm:xi-psi_2} hold. Let $x_k$ and $y_k$ be generated by (\ref{1}). Let $C_\beta = B_1 + B_2 \beta$ with two constants $B_1, B_2$, and we set $\omega = \frac{\alpha}{\beta}$. There exist three functions generated by $\omega$ with $D_1 (\omega) = - \frac{1}{2} \mu_g + \mathcal{O} \left( \frac{1}{\omega} \right)$ as $\omega \to +\infty$, $D_2 (\omega)$, and $D_3 (\omega)$.
    We assume $\omega$ to be sufficiently large and
    \begin{gather}
        D_1 (\omega) \le - \frac{1}{4} \mu_g.
    \end{gather}
    Then there exists a $\beta > 0$ s.t.
    \begin{align}
        e^{-\epsilon} \triangleq 1 + D_1 (\omega) \beta + D_2 (\omega) \beta^2 + D_3 (\omega) \beta^3 < 1.
    \end{align}
    Together, we have $\forall k \ge 0$,
    \begin{align}
        \mathbb{E} [ V_k ] \le e^{-\epsilon k} V_0 .
    \end{align}
\end{thm}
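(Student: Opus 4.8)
The plan is to convert the one-step recursion of Lemma \ref{lem:3} into a genuine geometric contraction $\mathbb{E}[V_{k+1}] \le e^{-\epsilon}\mathbb{E}[V_k]$ and then iterate. The point that distinguishes this from the polynomial-rate Theorem \ref{thm:1} is that under Assumption \ref{asm:xi-psi_2} (the case $\delta_{ij}\equiv 1$) both noise variances are bounded by a constant multiple of $V_k$ itself; consequently the additive noise terms in Lemma \ref{lem:3} can be folded back into the multiplicative coefficient of $\mathbb{E}[V_k]$, leaving no residual constant term that would otherwise cap the rate at a polynomial.

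First I would specialize Lemma \ref{lem:3} to the constant step sizes $\alpha_k\equiv\alpha$, $\beta_k\equiv\beta$. Note that the recursion of Lemma \ref{lem:3} only uses the zero-mean martingale structure and the bounds of Lemmas \ref{lem:1}--\ref{lem:2}, keeping $\mathbb{E}[\Vert\xi_k\Vert^2]$ and $\mathbb{E}[\Vert\psi_k\Vert^2]$ symbolic, so it remains valid under Assumption \ref{asm:xi-psi_2} even though that assumption sits at the boundary $\delta_{ij}=1$ excluded in Assumption \ref{asm:xi-psi}. Taking total expectations in Assumption \ref{asm:xi-psi_2} via the tower property gives $\mathbb{E}[\Vert\xi_k\Vert^2] \le \Gamma_{11}\mathbb{E}[\Vert\hat{x}_k\Vert^2] + \Gamma_{12}\mathbb{E}[\Vert\hat{y}_k\Vert^2]$ and analogously for $\psi_k$. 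Since $V_k = c\frac{\beta}{\alpha}\Vert\hat{x}_k\Vert^2 + \Vert\hat{y}_k\Vert^2$ is a sum of nonnegative terms, I would use $\Vert\hat{y}_k\Vert^2 \le V_k$ and $\Vert\hat{x}_k\Vert^2 \le \frac{\alpha}{c\beta}V_k$ to obtain $\mathbb{E}[\Vert\xi_k\Vert^2] \le (\frac{\Gamma_{11}}{c}\frac{\alpha}{\beta} + \Gamma_{12})\mathbb{E}[V_k]$ and $\mathbb{E}[\Vert\psi_k\Vert^2] \le (\frac{\Gamma_{21}}{c}\frac{\alpha}{\beta} + \Gamma_{22})\mathbb{E}[V_k]$.

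Substituting these into Lemma \ref{lem:3} and writing $\alpha=\omega\beta$, every term becomes a monomial in $\beta$ with $\omega$-dependent coefficients, so I would collect the result as $\mathbb{E}[V_{k+1}] \le (1 + D_1(\omega)\beta + D_2(\omega)\beta^2 + D_3(\omega)\beta^3)\mathbb{E}[V_k]$. The decisive bookkeeping is to isolate the terms linear in $\beta$: the drift $-\frac{1}{2}\mu_g\beta$ together with the two contributions from $\frac{\beta^3}{\alpha^2}\mathbb{E}[\Vert\psi_k\Vert^2]$, namely $\frac{\Gamma_{21}}{c}\frac{\beta^2}{\alpha} = \frac{\Gamma_{21}}{c\omega}\beta$ and $\Gamma_{22}\frac{\beta^3}{\alpha^2} = \frac{\Gamma_{22}}{\omega^2}\beta$. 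This yields $D_1(\omega) = -\frac{1}{2}\mu_g + \mathcal{O}(1/\omega)$, exactly as claimed, while the $\xi$-terms, the $\beta^2\mathbb{E}[\Vert\psi_k\Vert^2]$ term, and $C_\beta\alpha^2=(B_1+B_2\beta)\omega^2\beta^2$ enter only at orders $\beta^2$ and $\beta^3$, defining $D_2(\omega)$ and $D_3(\omega)$.

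Finally, with $\omega$ large enough that $D_1(\omega)\le-\frac14\mu_g$ (the hypothesis), I would write $D_1(\omega)\beta + D_2(\omega)\beta^2 + D_3(\omega)\beta^3 = \beta\,[D_1(\omega) + D_2(\omega)\beta + D_3(\omega)\beta^2]$ and pick $\beta>0$ small enough that $D_2(\omega)\beta + D_3(\omega)\beta^2 < \frac14\mu_g$, forcing the bracket strictly negative; hence $e^{-\epsilon} \triangleq 1 + D_1\beta + D_2\beta^2 + D_3\beta^3 < 1$. A one-line induction iterating $\mathbb{E}[V_{k+1}] \le e^{-\epsilon}\mathbb{E}[V_k]$ from $\mathbb{E}[V_0]=V_0$ then gives $\mathbb{E}[V_k] \le e^{-\epsilon k}V_0$. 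I expect the main obstacle to be precisely the order-tracking in the third step: one must verify that the only noise contributions surviving at first order in $\beta$ carry a factor $\beta^3/\alpha^2$ (equivalently $\beta^2/\alpha$) and are therefore $\mathcal{O}(1/\omega)$, since this is exactly what allows a large time-scale separation $\omega$ to preserve the negative drift $-\frac12\mu_g$ and thus guarantee contraction; all other noise terms must be shown to be harmless higher-order corrections.
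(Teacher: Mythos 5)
Your proposal is correct and follows essentially the same route as the paper's proof: bound $\mathbb{E}[\Vert\xi_k\Vert^2]$ and $\mathbb{E}[\Vert\psi_k\Vert^2]$ by multiples of $\mathbb{E}[V_k]$ using $\delta_{ij}\equiv 1$, fold them into the recursion of Lemma \ref{lem:3} with $\alpha=\omega\beta$ to obtain $\mathbb{E}[V_{k+1}]\le(1+D_1(\omega)\beta+D_2(\omega)\beta^2+D_3(\omega)\beta^3)\mathbb{E}[V_k]$ with only the $\frac{\beta^3}{\alpha^2}$-weighted $\psi$-contributions surviving at first order in $\beta$ (hence $D_1=-\tfrac12\mu_g+\mathcal{O}(1/\omega)$), then choose $\beta$ small and iterate. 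The only discrepancy is cosmetic: your bookkeeping of the $\mathcal{O}(1/\omega)$ correction to $D_1$ drops the prefactor $c\tfrac{2}{\mu_f}L_f^2$ appearing in the paper's $D_1(\omega)$, which does not affect the argument.
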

\begin{proof}
    We provide a detailed statement and proof in Appendix A.6 of the supplementary material.
\end{proof}

Combining Theorem \ref{thm:1} and \ref{thm:2}, it is now clear that the boundary between the polynomial convergence rate and the exponential convergence rate occurs at $\delta_{ij} \equiv 1$. Specifically, if $\delta_{ij} < 1$, the convergence rate remains polynomial, but as $\delta_{ij} \to 1^-$, the convergence rate increases significantly. When $\delta_{ij} \equiv 1$, an exponential convergence rate is achieved.

\section{Convergence Under Time-Dependent Noise Assumptions}\label{sec:convergence_time}

We continue with the specified form of the coefficients $\alpha_k$ and $\beta_k$ as in (\ref{form_of_alpha}),
and proceed to derive our conclusion under Assumption \ref{asm:xi-psi_3}, where the noises satisfy the decay property at a rate of $\mathcal{O} \left( k^{-\gamma_i} \right)$.

\begin{thm}
\label{thm:3}
    Suppose that Assumptions \ref{asm:lambda}--\ref{asm:g},\ref{asm:xi-psi_3} hold. Let $x_k$ and $y_k$ be generated by (\ref{1}). We assume \begin{gather}
        b=1 , a=\frac{2-\gamma_1+\gamma_2}{3} \in (\frac{1}{2}, 1] , t = \frac{2+2\gamma_1+\gamma_2}{3}.
    \end{gather}
    In addition, $\beta$ and $\frac{\alpha}{\beta}$ are sufficiently large. Let $C_{\alpha\beta} = C(\alpha, \beta) \ge C(\alpha_0, \beta_0)$ defined in Lemma \ref{lem:3} and set $k_0$ as large enough. There exists a function generated by $M$ with $C_1(M) = \mathcal{O} (M)$ as $M \to +\infty$ and a constant $C_2'$.
    There exists a $M$ s.t.
    \begin{gather}
        M \ge 3 k_0^t V_0 \lor \frac{3}{a} C_2'.
    \end{gather}
    Together, we have $\forall k \ge 0$,
    \begin{align}
        \mathbb{E} [ V_k ] \le \frac{M}{(k + k_0)^t} .
    \end{align}
\end{thm}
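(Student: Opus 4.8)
The plan is to prove the bound by induction on $k$, starting from the one-step recursion of Lemma~\ref{lem:3}. First I would substitute the prescribed step sizes $\alpha_k = \alpha(k+1+k_0)^{-a}$, $\beta_k = \beta(k+1+k_0)^{-1}$ and the time-dependent variance bounds of Assumption~\ref{asm:xi-psi_3} into Lemma~\ref{lem:3}. Writing $n = k+1+k_0$, this turns the recursion into
\begin{align}
    \mathbb{E}[V_{k+1}] \le \Big(1 - \tfrac{1}{2}\mu_g\beta n^{-1} + C_{\alpha\beta}\alpha^2 n^{-2a}\Big)\mathbb{E}[V_k] + E_k ,
\end{align}
where the perturbation $E_k$ collects the three noise contributions of orders $n^{-(a+1+\gamma_1)}$ (from $\alpha_k\beta_k\mathbb{E}[\Vert\xi_k\Vert^2]$), $n^{-(3-2a+\gamma_2)}$ (from $\tfrac{\beta_k^3}{\alpha_k^2}\mathbb{E}[\Vert\psi_k\Vert^2]$), and $n^{-(2+\gamma_2)}$ (from $\beta_k^2\mathbb{E}[\Vert\psi_k\Vert^2]$).

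The second step is the bookkeeping that justifies the exponents in the statement. With $a = (2-\gamma_1+\gamma_2)/3$ one checks directly that $a+1+\gamma_1 = 3-2a+\gamma_2 = t+1$, so the choice of $a$ is exactly what equalises the two dominant noise orders at $n^{-(t+1)}$; meanwhile the condition $\gamma_1-\gamma_2 < \tfrac12$ forces $2+\gamma_2 \ge t+1$ (so the remaining $\psi$-term is no larger) and simultaneously $a > \tfrac12$, while $\gamma_1-\gamma_2 \ge -1$ gives $a \le 1$. Hence every term in $E_k$ is bounded by $C_2' n^{-(t+1)}$ for a suitable constant, and since $2a > 1$ the self-perturbation $C_{\alpha\beta}\alpha^2 n^{-2a}$ is of strictly smaller order than the contraction $\tfrac{1}{2}\mu_g\beta n^{-1}$.

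With these orders in hand I would run the induction with hypothesis $\mathbb{E}[V_k] \le M(k+k_0)^{-t}$. The base case follows from $M \ge 3k_0^t V_0$. For the inductive step, the crux is to pass from $(k+k_0)^{-t}$ to $(k+1+k_0)^{-t}$: applying Bernoulli's inequality to $x^{-t}$ (equivalently, bounding the unit decrement by the derivative) gives $(k+k_0)^{-t} - (k+1+k_0)^{-t} \le t(k+k_0)^{-t-1}$, which I would compare against the gain produced by the contraction. Choosing $\beta$ large enough that $\tfrac{1}{2}\mu_g\beta > t$, then $k_0$ large enough that the $O(n^{-2a})$ perturbation and the index discrepancy between $n$ and $k+k_0$ are absorbed, and finally $M \ge \tfrac{3}{a}C_2'$ to dominate the residual noise constant, closes the induction and yields $\mathbb{E}[V_{k+1}] \le M(k+1+k_0)^{-t}$.

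I expect the main obstacle to be the inductive step's delicate balancing: every quantity carries a different power of $n$, and one must simultaneously ensure that the contraction $\tfrac{1}{2}\mu_g\beta_k$ strictly dominates both the self-perturbation $C_{\alpha\beta}\alpha_k^2$ and the Bernoulli decrement $t(k+k_0)^{-t-1}$, while the noise constant $C_2'$ is absorbed by $M$. Reconciling the index shift between the step sizes and noise (indexed by $n=k+1+k_0$) and the target $M(k+k_0)^{-t}$ via Bernoulli's inequality, while keeping all constants uniform in $k$, is the technically demanding part; the requirement $\gamma_1-\gamma_2\in[-1,\tfrac12)$ is precisely what keeps this balancing feasible.
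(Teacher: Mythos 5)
Your proposal is correct and follows essentially the same route as the paper's proof: substitute the step sizes and the time-dependent noise bounds into Lemma~\ref{lem:3}, observe that the choice of $a$ and $t$ equalises the dominant noise exponents at $-(t+1)$ (the paper's identity $-1+2a=-1+a+t-\gamma_1=-3+4a+t-\gamma_2$, with the $\beta_k^2$ term subdominant precisely because $\gamma_1-\gamma_2<\tfrac12$), and close an induction via Bernoulli's inequality with $\beta$, $k_0$, and $M$ taken large enough. The only mechanical difference is that the paper multiplies the recursion by $(k+1+k_0)^{2a+t}$ and telescopes before dividing back, whereas you close the induction in a single step; both arguments balance the same three effects (contraction, $\mathcal{O}(n^{-2a})$ self-perturbation, $\mathcal{O}(n^{-(t+1)})$ noise) and yield the same rate.
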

\begin{proof}
    We provide a detailed statement and proof in Appendix A.7 of the supplementary material.
\end{proof}

Similarly, when $\gamma_i \equiv 0$, this reduces to the special case studied in \citet{doan2022nonlinear}. Furthermore, we observe that $t \to + \infty$ as $\gamma_i \to + \infty$, which aligns with the intuition that a higher order of the time-dependent noise parameters $\gamma_i$ also accelerates convergence. However, note that the magnitude $t$ increases linearly with respect to $\gamma_i$, which contrasts with the findings of Theorem 1. This difference explains why we cannot reach the same conclusion about the exponential convergence rate of $\mathcal{O} \left( e^{-\epsilon k} \right)$ as stated in Theorem 2.

\section{Numerical Experiment}
\label{sec:numerical_experiment}

We conduct numerical experiments using two classical examples to illustrate the convergence rates in Sections \ref{sec:convergence_state} and \ref{sec:convergence_time}. We initialize $(x_0, y_0) = (1, 1)$, and choose $(\alpha, \beta)$ to satisfy the same conditions as in Theorems \ref{thm:1}-\ref{thm:3}. The step sizes $\alpha_k$ and $\beta_k$ are set as $\alpha_k = \frac{\alpha}{(k + 1 + k_0)^a}$ and $\beta_k = \frac{\beta}{(k + 1 + k_0)^b}$, where $k_0$ is chosen optimally. It is important to note that $k_0$ increases rapidly with increasing $(\alpha, \beta)$ and affects the step sizes exponentially. Therefore, within constraints, we set the initial values of $(\alpha, \beta)$ as small as possible to prevent slow convergence caused by the excessively small initial step sizes \citep{moulines2011non}. For simplicity of analysis, we set $\delta_{ij} \equiv \delta, \Gamma_{ij} \equiv \Gamma$, and $\gamma_k \equiv \gamma$. $ \Gamma_{kk}^{'} \equiv \Gamma^{'}$ as noise parameters. All experiments are repeated 1000 times.

For $\delta \in [0, 1)$, we present log-log plots of the averaged $V_k$ with respect to the number of iterations. For $\delta = 1$, we use the logarithmic plot. The slope of the line indicates the convergence rate. Specifically, a relationship of the form $y = r x^{-s}$ corresponds to $\log y = -s \log x + \log r$. To ensure stability, we calculate the slope of the error lines using iterations after $10^6$.

\begin{figure*}[th!]
    \centering
    {
        \includegraphics[width = 0.45\linewidth]{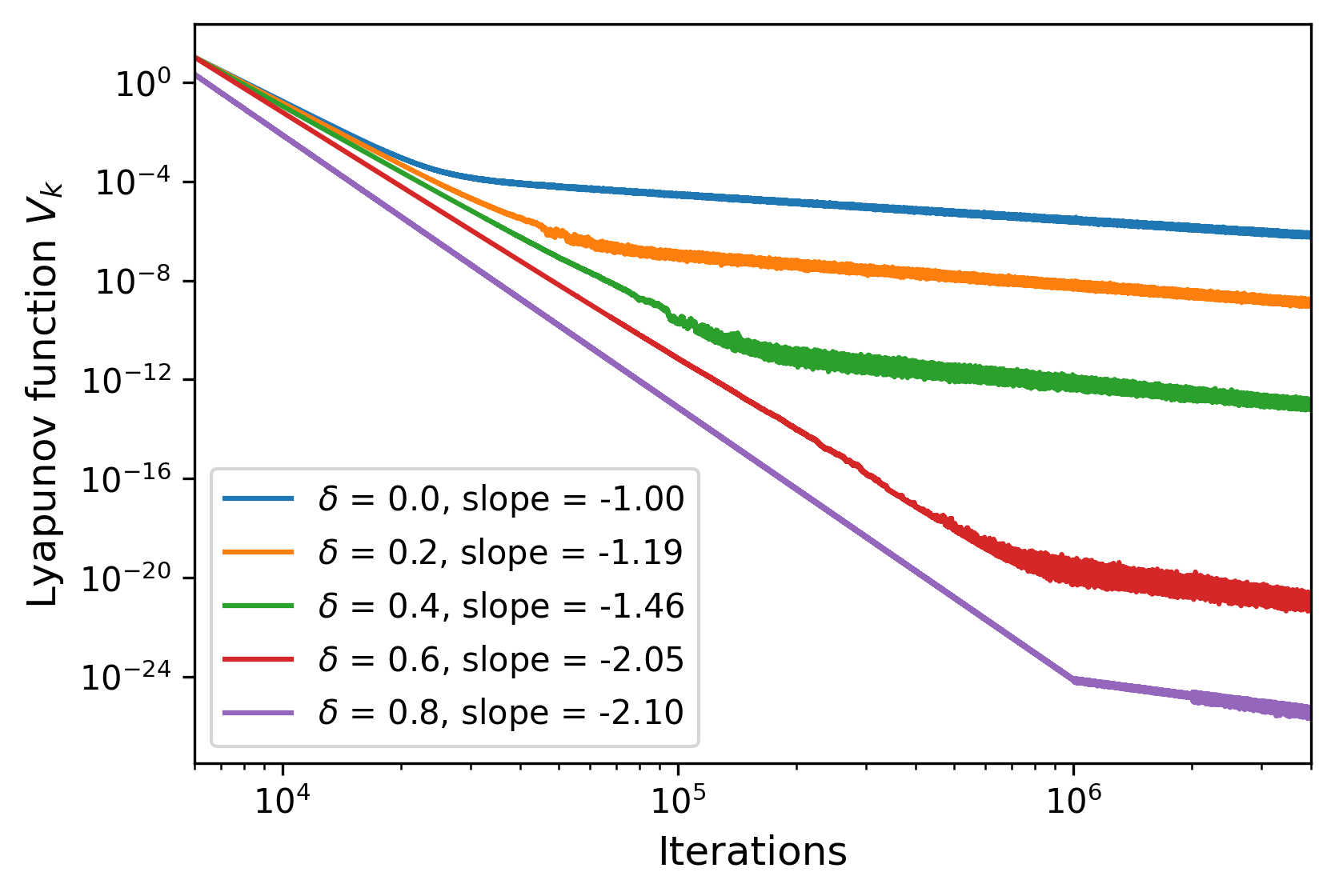}
    }
    {
        \includegraphics[width = 0.45\linewidth]{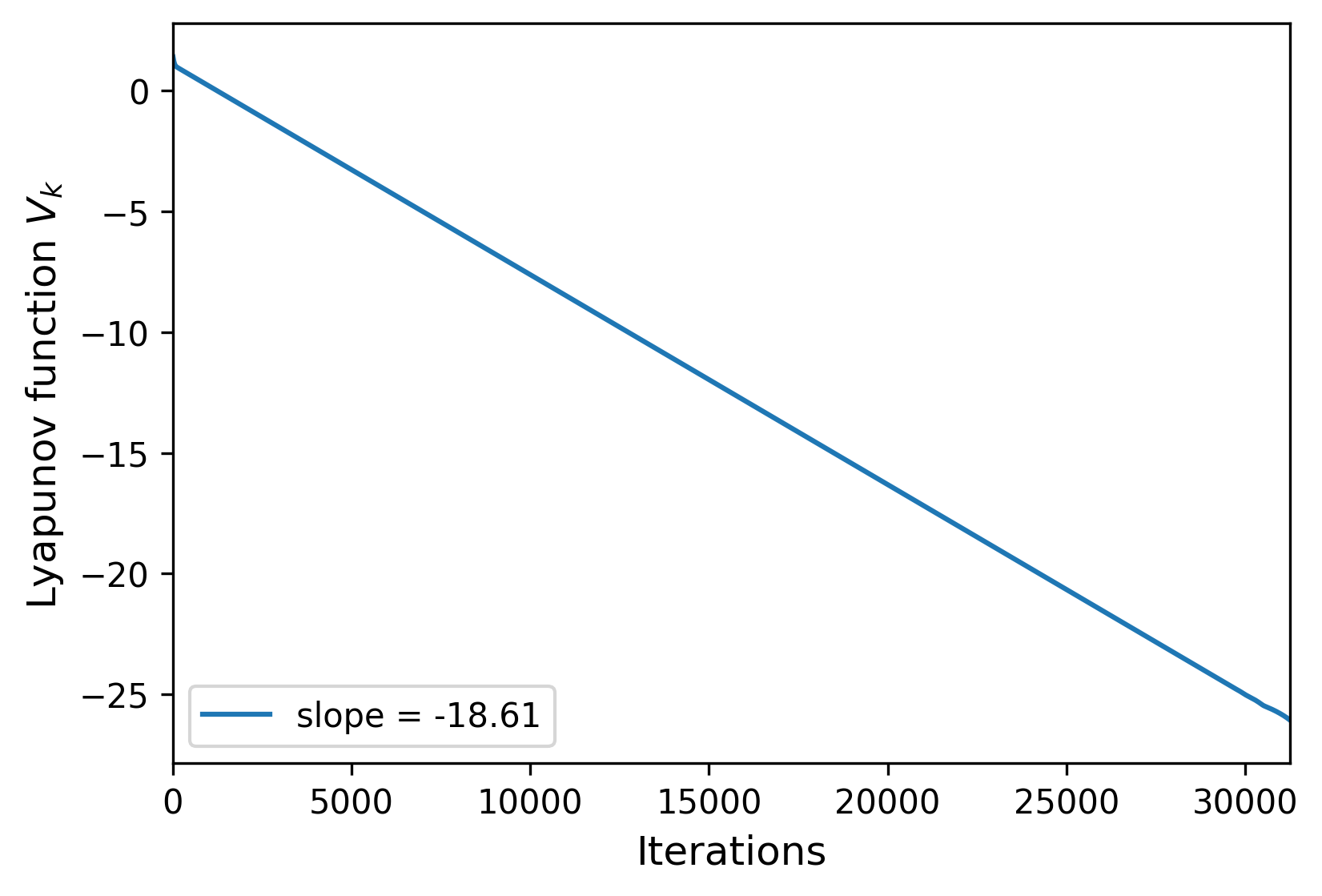}
    }
\caption{The convergence results of SGD with Polyak-Ruppert averaging. The figure on the left is a log-log plot in case $\delta=0.0, 0.2, 0.4, 0.6, 0.8$; the figure on the right is a logarithmic plot in case $\delta=1$. All R-squares of the fitted slopes do not exceed 1e-6.}
\label{fig:total_SGD_PR_averaging}
\end{figure*}

\begin{figure*}[th!]
    \centering
    {
        \includegraphics[width = 0.45\linewidth]{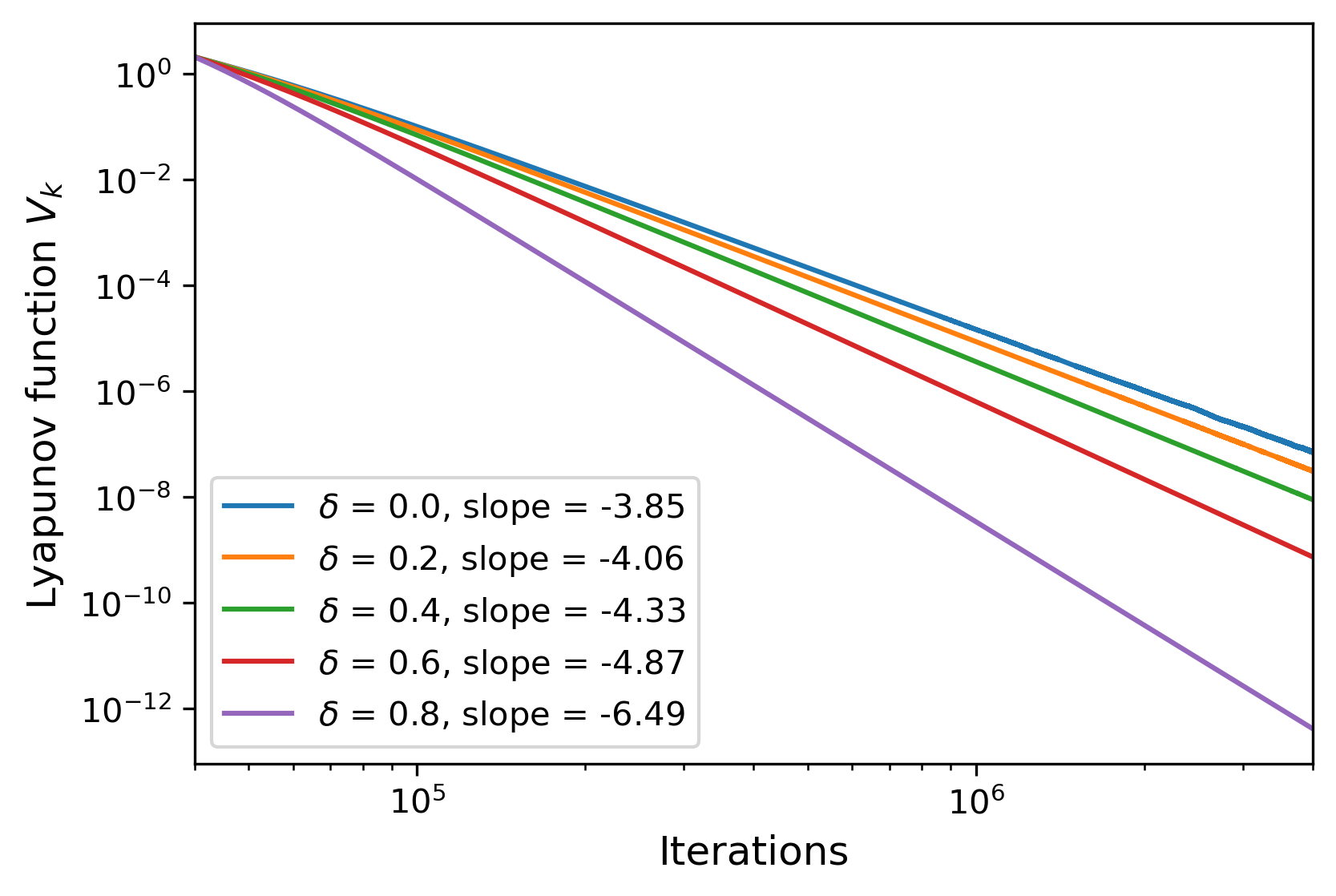}
    }
    {
        \includegraphics[width = 0.45\linewidth]{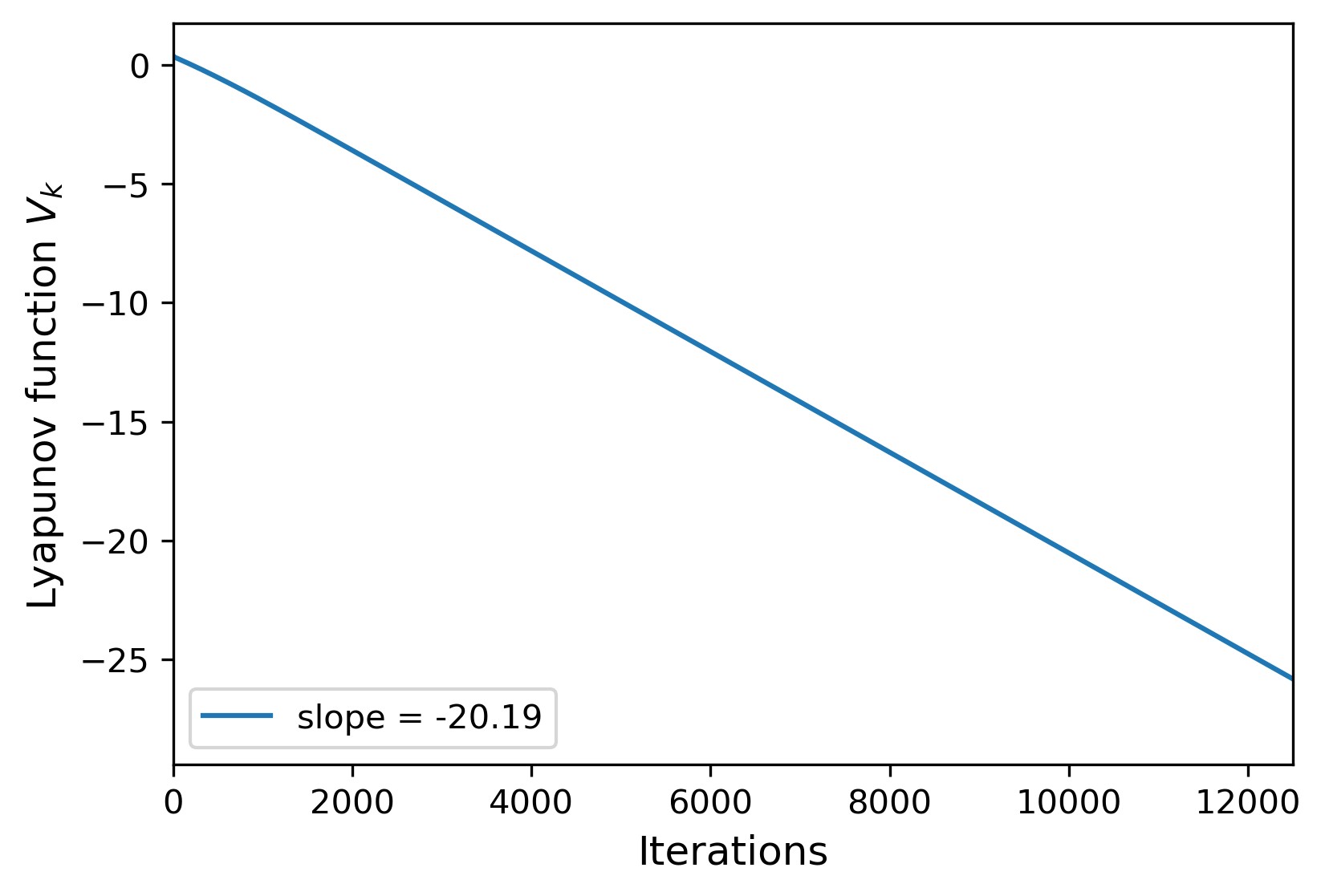}
    }
\caption{The convergence results of stochastic bilevel optimization. The figure on the left is a log-log plot in case $\delta=0.0, 0.2, 0.4, 0.6, 0.8$; the figure on the right is a logarithmic plot in case $\delta=1$. All R-squares of the fitted slopes do not exceed 0.01.}
\label{fig:total_SBO}
\end{figure*}

\paragraph{SGD with Polyak-Ruppert averaging.} 
We employ SGD with Polyak-Ruppert averaging (\ref{eq:SGD_PR_averaging}) under 5-dimensional setting to
minimize the function $F(x) = (x_1^2 + \sin x_1, \cdots, x_5^2 + \sin x_5)$ under normal white noise $\{ \xi_k \}$, which satisfies either Assumption \ref{asm:xi-psi} or \ref{asm:xi-psi_2}. The function $f( x , y ) = \nabla F( x )$ meets Assumptions \ref{asm:lambda}--\ref{asm:g} in Section \ref{sec:problem_setup}, with parameters set to $L_f = 3, \mu_f = 1, L_g = 2, \mu_g = 1$, and $L_{\lambda} = 0$.

Figure \ref{fig:total_SGD_PR_averaging} illustrates the convergence results under various parameters. For $\delta_{ij} \equiv \delta \in [0, 1)$, we examine five values: $\delta = 0.0, 0.2, 0.4, 0.6, 0.8$ with $\Gamma = 0.02$. As shown in Figure \ref{fig:total_SGD_PR_averaging}, at the beginning of iterations, the convergence rate is no slower than our theoretical results. Moreover, with increasing iterations and the decay of errors (or Lyapunov functions $V_k$), the convergence rate is also close to our theoretical bound. Specifically, the slope closely approximates $t$ as defined in Theorem \ref{thm:1}. Moreover, as $\delta$ increases, the absolute value of the slope also increases, indicating faster convergence of the algorithm. This result is consistent with the intuition that a smaller noise volatility allows faster convergence without significant fluctuations. Figure \ref{fig:total_SGD_PR_averaging} demonstrates that the algorithm actually achieves an exponential convergence rate of $\mathcal{O} \left( e^{-\epsilon k} \right)$ when $\delta = 1$ and $\Gamma^{'} = 0.1$.

\paragraph{Stochastic bilevel optimization.}
We consider an example of stochastic bilevel optimization (\ref{eq:SBO}) specified as follows:
\begin{align}
\label{example:SBO}
    F (x, y) &= 10 (x + \widetilde{h}_2(y))^2 + 10 \sin (x + \widetilde{h}_2(y)),\\ 
    G (x, y) &= (x + \widetilde{h}_2(y))^2 + \sin (y) + y^2,
\end{align}
where $\widetilde{h}_2 (z) = \operatorname{sign}(z) z^2 /2$ if $|z| \leq 1$ and $\operatorname{sign}(z) (|z| - 1/2)$ otherwise. We directly take the partial derivative of equation (\ref{example:SBO}) to obtain $f(x, y)$ and $g(x, y)$. It is straightforward to verify that Assumptions \ref{asm:lambda}--\ref{asm:g} hold with parameters set to $L_f = 60, \mu_f = 10, L_g = 3, \mu_g = 1$, and $L_\lambda = 3$. The noise terms $\{ \xi_k \}$ and $\{ \psi_k \}$ follow independent normal distributions satisfying either Assumption \ref{asm:xi-psi} or \ref{asm:xi-psi_2}.

\begin{figure*}[th!]
    \centering
    {
        \includegraphics[width = 0.45\linewidth]{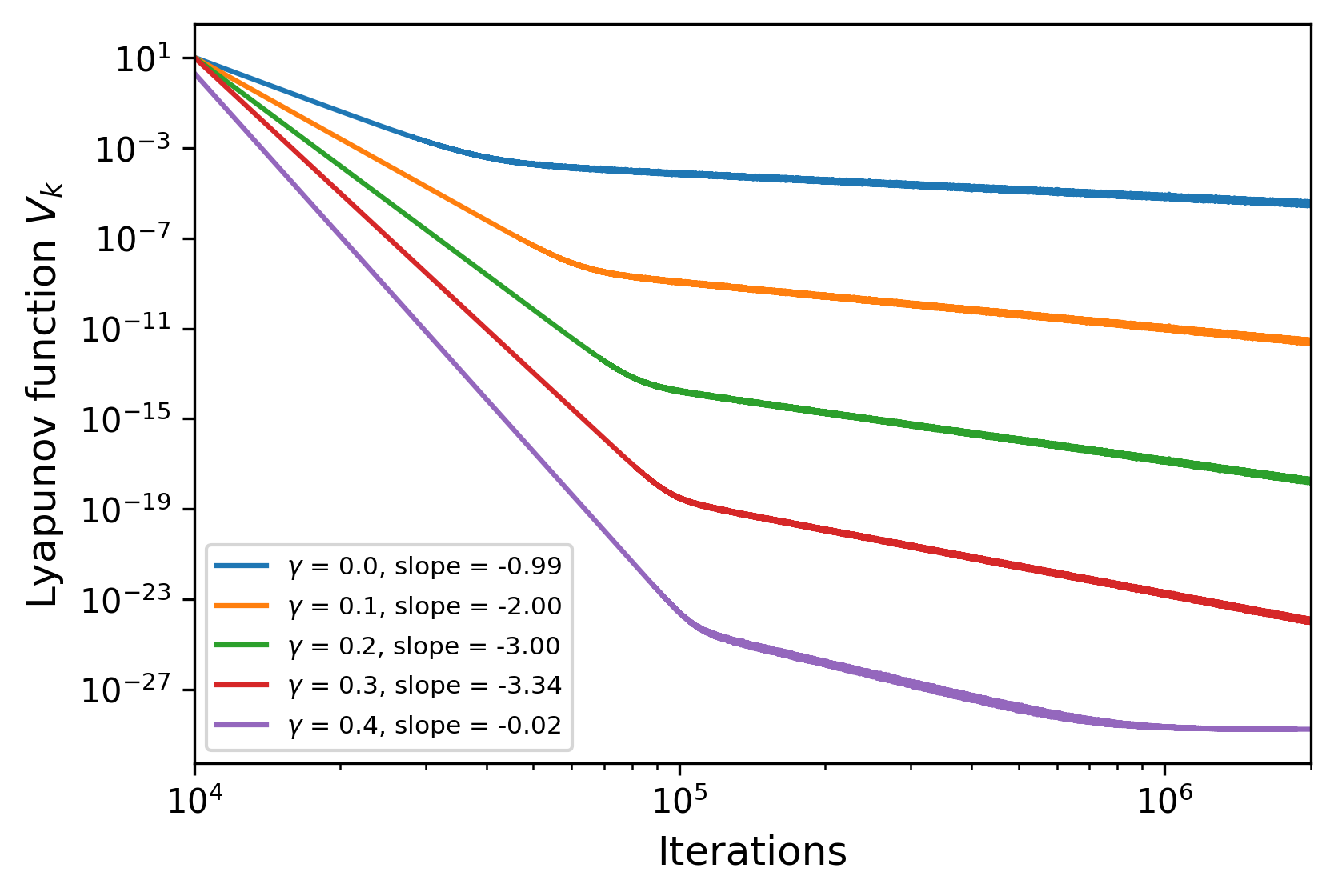}
    }
    {
        \includegraphics[width = 0.45\linewidth]{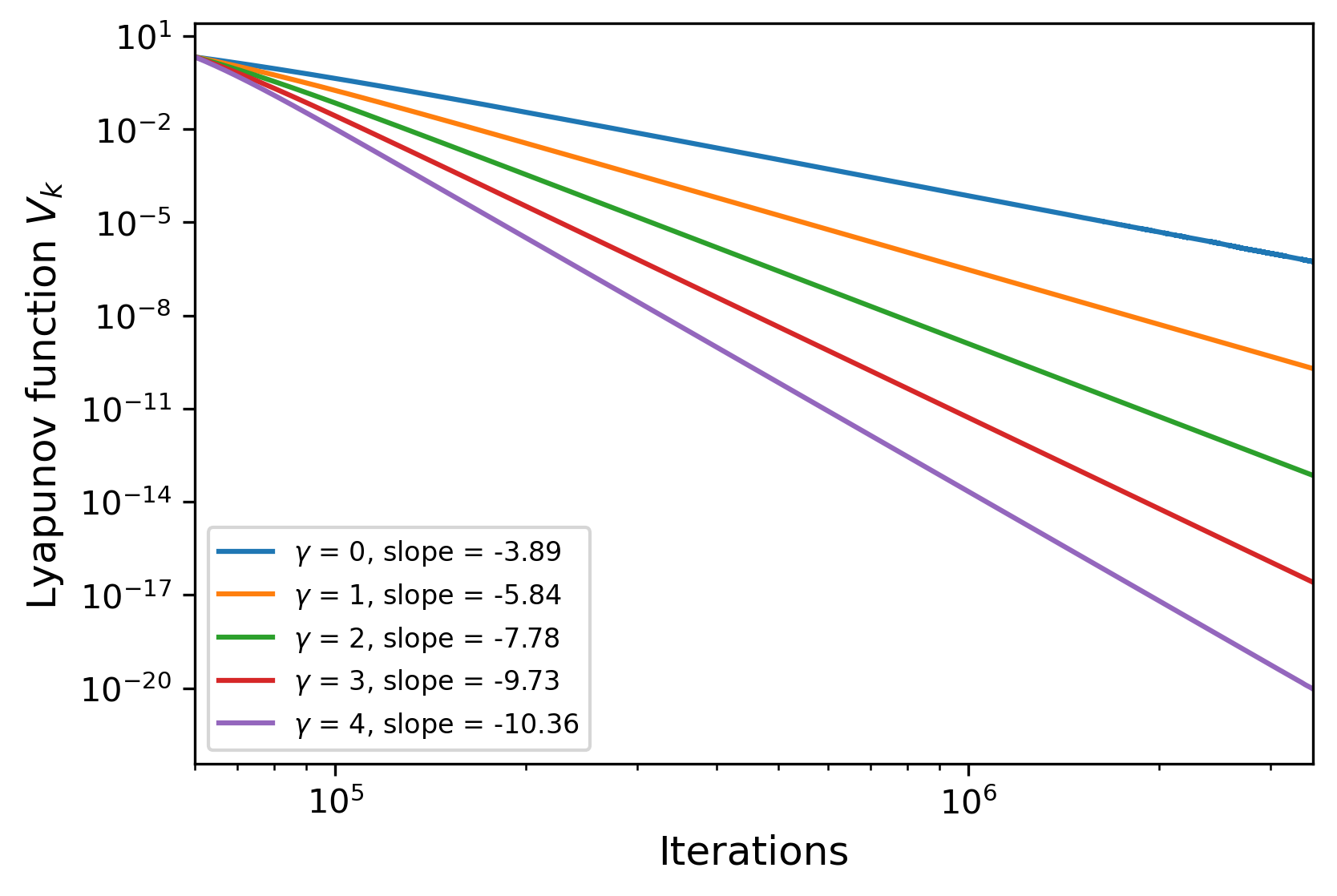}
    }
\caption{The convergence results of SGD with Polyak-Ruppert averaging (left) and stochastic bilevel optimization (right) under time-dependent noise assumption. The figure is a log-log plot in case $\gamma=0, 1, 2, 3, 4$. All R-squares of the fitted slopes do not exceed 0.1.}
\label{fig:total_time}
\end{figure*}

Figure \ref{fig:total_SBO} illustrates the convergence results of the stochastic bilevel optimization algorithm under two cases $\delta \in [0, 1)$ and $\delta = 1$. The convergence rate increases as $\delta$ approaches 1 within the interval $[0, 1)$. Notably, all experiments in this problem converge no slower than the theoretical predictions. Furthermore, the algorithm achieves an exponential convergence rate when $\delta = 1$, which matches the theoretical results of Theorem \ref{thm:2}.

Moreover, we replicate the above experiments under time-dependent noise assumptions. Figure \ref{fig:total_time} displays the convergence results for the two examples mentioned above. We calculate the slope of the lines in Figure \ref{fig:total_time} using data from iterations after $10^5$. The observed trends are analogous to those in the state-dependent case. Notably, when $\gamma = 4$, the error curve appears flat due to limitations in computational accuracy. For this reason, we compute the slope using data from iterations between $10^5$ and $10^6$ for $\gamma = 4$. As depicted in Figure \ref{fig:total_time}, the convergence rate increases with higher values of $\gamma$, and all experiments in this problem converge even faster than the theoretical predictions.

\section{Conclusion}
\label{sec:conclusion}

This paper studies the convergence rate of two-time-scale SA under state- and time-dependent noises. The noises decay with respect to the state residual variables $\Vert \hat{x}_k \Vert^2$ and $\Vert \hat{y}_k \Vert^2$, or time (iteration number) $k$, respectively. We introduce a linear programming function $m(x)$ to establish a general relationship between the parameters $\delta_{ij}$ and the convergence rate of $V_k$. Specifically, in the state-dependent case where $\delta_{ij} \in [0, 1)$, the Lyapunov function $V_k$ decays at a polynomial rate of $\mathcal{O}(k^{-t})$, and a similar result holds in the time-dependent case. Moreover, we derive explicit solutions for the optimal step size when all noise decay parameters are identical. 

Furthermore, it is notable that two-time-scale algorithms can achieve an exponential convergence rate under the condition that the variances of noise are bounded by quadratic forms of the state variables, i.e., $\| \hat{x}_k \|^2$ and $\| \hat{y}_k \|^2$.

We present two numerical examples: SGD with Polyak-Ruppert averaging and stochastic bilevel optimization. The convergence results corroborate our theoretical insights. As $\delta_{ij} \to 1^{-}$ or $\gamma_{i} \to + \infty$, the exponent $t$ in the convergence rate $\mathcal{O} \left( k^{-t} \right)$ increases, potentially reaching $\mathcal{O} \left( k^{-\infty} \right)$. Furthermore, numerical experiments indicate that when $\delta_{ij} \equiv 1$, the exponential convergence rate of $\mathcal{O} \left( e^{-\epsilon k} \right)$ in Theorem \ref{thm:2} can indeed be achieved.

In conclusion, our findings show that under the assumptions outlined in \citet{ilandarideva2023accelerated} and \citet{karandikar2023convergence}, two-time-scale SA can achieve a convergence rate faster than $\mathcal{O} \left( k^{-1} \right)$ and, in certain cases, even attain an exponential convergence rate of $\mathcal{O} \left( e^{-\epsilon k} \right)$.

\section*{Acknowledgments}
Research support from the National Key R\&D Program of China (2022YFA1007900), the National Natural Science Foundation of China (12271013,72342004), Yinhua Education Foundation, and the Fundamental Research Funds for the Central Universities is gratefully acknowledged.
\bibliography{aaai25}

\onecolumn
\setcounter{equation}{0}\renewcommand{\theequation}{S\arabic{equation}}
\setcounter{figure}{0}\renewcommand{\thefigure}{S\arabic{figure}}
\setcounter{table}{0}\renewcommand{\thetable}{S\arabic{table}}
\setcounter{thm}{0}\renewcommand{\thethm}{S\arabic{thm}}
\setcounter{lemma}{0}\renewcommand{\thelemma}{S\arabic{lemma}}
\setcounter{corollary}{0}\renewcommand{\thecorollary}{S\arabic{corollary}}
\setcounter{assumption}{0}\renewcommand{\theassumption}{S\arabic{assumption}}

\clearpage
\appendix
\renewcommand{\thesection}{A.\arabic{section}}
\renewcommand{\thetable}{A.\arabic{table}}
\renewcommand{\thefigure}{A.\arabic{figure}}
\section*{Supplementary Material}
\addcontentsline{toc}{section}{Supplementary Material}

%

\date{}

\thispagestyle{empty}

\setcounter{page}{1}
\pagenumbering{arabic}
\setlength{\baselineskip}{1.5\baselineskip}

\setcounter{equation}{0}
\setcounter{table}{0}
\setcounter{figure}{0}

\section{Introduction}
\label{supp:sec:intro}

This supplementary material provides precise definitions and theorem statements. Mathematical proof can be found in Appendices.

Define the fast-scale as $x_k \in \mathbb{R}^{d_1} $ and the slow-scale $y_k \in \mathbb{R}^{d_2} $ with unknown update functions $ f : \mathbb{R}^{d_1} \times \mathbb{R}^{d_2} \to \mathbb{R}^{d_1} $ and $ g : \mathbb{R}^{d_1} \times \mathbb{R}^{d_2} \to \mathbb{R}^{d_2} $. $f( x_k , y_k ) + \xi_k$ and $ g( x_k ,y_k ) + \psi_k$ are their noisy observations. We consider the following two-time-scale process:
\begin{equation}
\begin{aligned}
     x_{k+1}-x_k = - \alpha_k [ f( x_k , y_k ) + \xi_k ], \\
     y_{k+1}-y_k = - \beta_k [ g( x_k ,y_k ) + \psi_k],
     \label{supp:1}
\end{aligned}
\end{equation}
where $\{\xi_k\}$ and $\{\psi_k\}$ are martingale differences with $\alpha_k$ and $\beta_k$ decreasing.

\section{Problem Setup}
\label{supp:sec:problem_setup}

\begin{assumption}
\label{supp:asm:lambda}
Given $y$ there exists an operator $\lambda$ such that $x=\lambda(y)$ is the unique solution of
\begin{align}
    f(\lambda(y), y)=0.
\end{align}
Suppose $\lambda$ is Lipschitz continuous with respect to constant $L_\lambda$,
\begin{align}
    \Vert \lambda(y_1) - \lambda(y_2) \Vert \le L_\lambda \Vert y_1 - y_2 \Vert.
\end{align}
\end{assumption}

\begin{assumption}
\label{supp:asm:f}
$f$ is Lipschitz continuous with positive constant $L_f$, i.e. $\forall x_1, x_2, y_1$, and $y_2$,
\begin{align}
    \Vert f( x_1 ,y_1) - f( x_2 , y_2) \Vert &\le L_f (\Vert x_1 - x_2 \Vert + \Vert y_1 - y_2 \Vert),
\end{align}
and $f$ is strongly monotone w.r.t $x$ when $y$ is fixed, i.e. there exists a constant $\mu_f > 0$,
\begin{align}
    \langle x_1 - x_2, f(x_1, y) - f(x_2, y) \rangle \ge \mu_f \Vert x_1 - x_2 \Vert^2.
\end{align}
\end{assumption}

\begin{assumption}
\label{supp:asm:g}
The operator $g(\cdot, \cdot)$ is Lipschitz continuous with constant $L_g$, i.e. $\forall x_1, x_2, y_1$, and $y_2$,
\begin{align}
    \Vert g(x_1,y_1) - g(x_2, y_2) \Vert \le L_g ( \Vert x_1 - x_2 \Vert + \Vert y_1 - y_2 \Vert).
\end{align}
Moreover, $g$ is 1-point strongly monotone w.r.t $y^*$, i.e. there exists a constant $\mu_g > 0$ such that
\begin{align}
    \langle y - y^*, g(\lambda(y), y) \rangle &\ge \mu_g \Vert y - y^* \Vert^2.
\end{align}
\end{assumption}

To better illustrate the update process, we introduce two residual variables
\begin{align}
    \hat{x}_k &= x_k - \lambda(y_k),\\
    \hat{y}_k &= y_k - y^*.
\end{align}

\begin{assumption}
\label{supp:asm:xi-psi}
We denote by $\mathcal{Q}_k$ the filtration containing all the history generated by up to the iteration $k$, i.e.,
\begin{align}
    \mathcal{Q}_k = \{ x_0, y_0, \xi_0, \psi_0, \xi_1, \psi_1, \ldots, \xi_k, \psi_k\} ,
    \label{supp:12}
\end{align}
where $\{\xi_k\}_{k \ge 0}$ are independent random variables with zero mean and bounded variances a.s., and so $\{\psi_k\}_{k \ge 0}$ are. Specifically, there exist $\Gamma_{11}$ and $\Gamma_{22}$ such that variances can be controlled as follows almost surely,
\begin{align}
    \mathbb{E}[\Vert \xi_k \Vert ^2 | \mathcal{Q}_{k-1}] &\le \Gamma_{11} \Vert \hat{x}_k \Vert ^ {2 \delta_{11}} + \Gamma_{12} \Vert \hat{y}_k \Vert ^ {2 \delta_{12}} , \\
    \mathbb{E}[\Vert \psi_k \Vert ^2 | \mathcal{Q}_{k-1}] &\le \Gamma_{21} \Vert \hat{x}_k \Vert ^ {2 \delta_{21}} + \Gamma_{22} \Vert \hat{y}_k \Vert ^ {2 \delta_{22}} .
\end{align}
where $\delta_{11}, \delta_{12}, \delta_{21}$, and $\delta_{22}$ are in $[0, 1)$. Meanwhile, denote $\Delta_{ij} = 1 - \delta_{ij}$ for all $i,j \in \{1, 2\}$.
\end{assumption}

\begin{assumption}
\label{supp:asm:xi-psi_2}
$\{\xi_k\}_{k \ge 0}$ and $\{\psi_k\}_{k \ge 0}$ are martingale sequences similarly defined in Assumption \ref{supp:asm:xi-psi} with $\delta_{ij} \equiv 1$, in other words a.s.,
\begin{align}
    \mathbb{E}[\Vert \xi_k \Vert ^2 | \mathcal{Q}_{k-1}] &\le \Gamma_{11} \Vert \hat{x}_k \Vert ^ 2 + \Gamma_{12} \Vert \hat{y}_k \Vert ^ 2, \\
    \mathbb{E}[\Vert \psi_k \Vert ^2 | \mathcal{Q}_{k-1}] &\le \Gamma_{21} \Vert \hat{x}_k \Vert ^ 2 + \Gamma_{22} \Vert \hat{y}_k \Vert ^ 2 .
\end{align}
\end{assumption}

For simplicity, we only consider the time-dependent condition under a certain form of coefficients $\alpha_k$ and $\beta_k$,
\begin{align}
    \alpha_k = \frac{\alpha}{(k + 1 + k_0)^a}, \beta_k = \frac{\beta}{(k + 1 + k_0)^b}.
\end{align}

\begin{assumption}
\label{supp:asm:xi-psi_3}
$\{\xi_k\}_{k \ge 0}$ and $\{\psi_k\}_{k \ge 0}$ are similarly defined in \ref{supp:12}, and for fixed $k_0$ there exist constants $\Gamma_{11}', \Gamma_{22}', \gamma_1, \gamma_2 \ge 0$ such that $\gamma_1 - \gamma_2 \in [-1, \frac{1}{2}) $ and variances can be controlled as follows almost surely,
\begin{align}
    \mathbb{E}[\Vert \xi_k \Vert ^2 | \mathcal{Q}_{k-1}] &\le \Gamma_{11}' (k+1+k_0)^{-\gamma_1}, \\
    \mathbb{E}[\Vert \psi_k \Vert ^2 | \mathcal{Q}_{k-1}] &\le \Gamma_{22}' (k+1+k_0)^{-\gamma_2} .
\end{align}
\end{assumption}

In general, we have
\begin{align}
    \hat{x}_{k+1} &= \hat{x}_k - \alpha_k f(x_k, y_k) + \lambda(y_k) - \lambda(y_{k+1}) - \alpha_k \xi_k, \\
    \hat{y}_{k+1} &= \hat{y}_k - \beta_k g(\lambda(y_k), y_k) + \beta_k ( g(\lambda(y_k), y_k) - g(x_k, y_k) ) - \beta_k \psi_k .
\end{align}

\begin{lemma}
\label{supp:lem:1}
Suppose that Assumptions \ref{supp:asm:lambda}--\ref{supp:asm:xi-psi} hold. Let $x_k$ and $y_k$ be updated by (\ref{supp:1}). Then, for all $k \ge 0$, we have
\begin{align}
        &\mathbb{E} [\Vert \hat{x}_{k+1} \Vert ^2 | \mathcal{Q}_{k-1}] \notag \\
        \le& (1 - \mu_f \alpha_k) \Vert \hat{x}_k \Vert ^2 + (2 L_\lambda L_g + \frac{2}{\mu_g} L_\lambda^2 L_g^2 (L_\lambda + 1)^2 ) \beta_k \Vert \hat{x}_k \Vert ^2 \notag \\
        &+ ( L_{f}^2 \alpha_k^2 + 4 L_\lambda^2  L_g^2 \beta_k^2 + 2 L_f L_\lambda L_g \alpha_k \beta_k + \frac{2}{\mu_g} L_f^2 L_\lambda^2 L_g^2 (L_\lambda + 1)^2 \alpha_k^2 \beta_k) \Vert \hat{x}_k \Vert ^2 \notag \\
        &+ \mu_g \beta_k \Vert \hat{y}_k \Vert ^2 + 4 L_\lambda^2  L_g^2 ( L_{\lambda} + 1)^2 \beta_k^2 \Vert \hat{y}_k \Vert ^2 \notag \\
        &+ \frac{2}{\mu_f \alpha_k} (1 + L_f^2 \alpha_k^2) L_f^2 \beta_k^2 \mathbb{E} [ \Vert \psi_k \Vert ^2 | \mathcal{Q}_{k-1}] + 2L_\lambda^2 \beta_k^2 \mathbb{E}[\Vert \psi_k \Vert^2 | \mathcal{Q}_{k-1}] + 2 \alpha_k^2 \mathbb{E}[\Vert \xi_k \Vert^2 | \mathcal{Q}_{k-1}].
\end{align}
\end{lemma}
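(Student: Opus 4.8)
The plan is to expand the squared norm of the fast-scale residual recursion and reduce every term to the conditional second moments of $\xi_k$ and $\psi_k$ using strong monotonicity of $f$, the Lipschitz bounds, and Young's inequality. First I would split the recursion
$\hat{x}_{k+1} = \bigl(\hat{x}_k - \alpha_k f(x_k,y_k)\bigr) + \bigl(\lambda(y_k) - \lambda(y_{k+1})\bigr) - \alpha_k \xi_k$
into three pieces: the drift $P := \hat{x}_k - \alpha_k f(x_k,y_k)$, the moving-target increment $v := \lambda(y_k) - \lambda(y_{k+1})$, and the martingale term $-\alpha_k\xi_k$. The key measurability observation is that $P$ is $\mathcal{Q}_{k-1}$-measurable while $v$ depends, among the fresh noises, only on $\psi_k$ (through $y_{k+1}$).

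Consequently, after taking $\mathbb{E}[\,\cdot \mid \mathcal{Q}_{k-1}]$ and using $\mathbb{E}[\xi_k\mid\mathcal{Q}_{k-1}]=0$, the cross term $2\langle P,\,-\alpha_k\xi_k\rangle$ vanishes, but $2\langle v,\,-\alpha_k\xi_k\rangle$ does \emph{not}, precisely because the paper imposes no restriction on the correlation between $\xi_k$ and $\psi_k$. I would dispose of this surviving cross term by Young's inequality, $2\langle v,\,-\alpha_k\xi_k\rangle \le \Vert v\Vert^2 + \alpha_k^2\Vert\xi_k\Vert^2$, which is what yields the leading factor of $2$ on the $\alpha_k^2\mathbb{E}[\Vert\xi_k\Vert^2]$ term and an extra copy of $\Vert v\Vert^2$ to be absorbed below. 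For the measurable part I would invoke $f(\lambda(y_k),y_k)=0$ together with strong monotonicity to get $\langle\hat{x}_k, f(x_k,y_k)\rangle\ge\mu_f\Vert\hat{x}_k\Vert^2$, giving $\Vert P\Vert^2 \le (1 - 2\mu_f\alpha_k + L_f^2\alpha_k^2)\Vert\hat{x}_k\Vert^2$; and for $v$ I would bound $\Vert v\Vert \le L_\lambda\beta_k\Vert g(x_k,y_k)+\psi_k\Vert$ followed by $\Vert g(x_k,y_k)\Vert \le L_g\Vert\hat{x}_k\Vert + L_g(L_\lambda+1)\Vert\hat{y}_k\Vert$, the latter using Lipschitzness of $g$ and $\lambda$ together with $g(\lambda(y^*),y^*)=g(x^*,y^*)=0$. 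Substituting these produces the $\beta_k^2$ coefficients on $\Vert\hat{x}_k\Vert^2$, $\Vert\hat{y}_k\Vert^2$, and $\Vert\psi_k\Vert^2$.

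The heart of the argument, and the main obstacle, is the cross term $2\langle P, v\rangle$. I would expand it against both the $\hat{x}_k$ and $-\alpha_k f(x_k,y_k)$ pieces of $P$ and against the $\Vert\hat{x}_k\Vert$, $\Vert\hat{y}_k\Vert$, $\Vert\psi_k\Vert$ pieces of the bound on $\Vert v\Vert$, then split each product by Young's inequality with \emph{individually tuned} weights. The weights must be chosen so that (i) the total penalty accrued on $\Vert\hat{x}_k\Vert^2$ at order $\alpha_k$ is at most $\mu_f\alpha_k$, so that combined with the $-2\mu_f\alpha_k$ from monotonicity it leaves exactly the advertised $(1-\mu_f\alpha_k)$ leading coefficient; and (ii) the $\Vert\hat{x}_k\Vert\,\Vert\hat{y}_k\Vert$ product, which appears at order $\beta_k$, is split with weight $\mu_g$ on the $\hat{y}_k$ side, producing the $\mu_g\beta_k\Vert\hat{y}_k\Vert^2$ term. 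This last term is deliberately engineered: it is what will later be controlled against the strong monotonicity of $g$ when Lemmas~\ref{lem:1} and \ref{lem:2} are assembled into the Lyapunov recursion. The remaining products route into the higher-order $\alpha_k^2$, $\alpha_k\beta_k$, and $\alpha_k^2\beta_k$ coefficients on $\Vert\hat{x}_k\Vert^2$ and into the $\frac{1}{\mu_f\alpha_k}$-weighted $\Vert\psi_k\Vert^2$ term, where the factor $(1+L_f^2\alpha_k^2)$ tracks the crude bound $\Vert P\Vert^2 \le (1+L_f^2\alpha_k^2)\Vert\hat{x}_k\Vert^2$ used when pairing $P$ against the $\psi_k$ contribution of $v$.

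Once the weights are fixed, collecting terms and taking $\mathbb{E}[\,\cdot\mid\mathcal{Q}_{k-1}]$ is routine bookkeeping, and I would not grind through it. I expect the only genuine care to lie in the weight selection of the previous paragraph: every Young split that touches $\Vert\hat{x}_k\Vert^2$ at order $\alpha_k$ must be charged against the monotonicity surplus without exceeding $\mu_f\alpha_k$, and the single split generating $\mu_g\beta_k\Vert\hat{y}_k\Vert^2$ must land on exactly that coefficient; getting either wrong would spoil the contraction or the downstream Lyapunov cancellation.
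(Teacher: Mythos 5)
Your proposal is correct and follows essentially the same route as the paper's proof: strong monotonicity for the drift term, Lipschitz bounds on $\lambda$ and $g$ (anchored at $f(\lambda(y_k),y_k)=0$ and $g(\lambda(y^*),y^*)=0$) for the moving-target increment, vanishing of the drift--$\xi_k$ cross term by $\mathcal{Q}_{k-1}$-measurability, and Young splits with weights $\mu_f\alpha_k$ and $\mu_g\beta_k$ on the drift--target cross term. Your three-piece decomposition with an explicit $v$--$\xi_k$ cross term is algebraically equivalent to the paper's grouping $\Vert \lambda(y_k)-\lambda(y_{k+1})-\alpha_k\xi_k\Vert^2 \le 2\Vert v\Vert^2+2\alpha_k^2\Vert\xi_k\Vert^2$, and yields the same factor of $2$ on the $\alpha_k^2\,\mathbb{E}[\Vert\xi_k\Vert^2\mid\mathcal{Q}_{k-1}]$ term.
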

\begin{proof}
    We provide a detailed proof in Appendix \ref{supp:proof_of_lem:1}.
\end{proof}

\begin{lemma}
\label{supp:lem:2}
Suppose that Assumptions \ref{supp:asm:lambda}--\ref{supp:asm:xi-psi} hold. Let $x_k$ and $y_k$ be generated by (\ref{supp:1}). Then, for all $k \ge 0$, we have
\begin{align}
    &\mathbb{E} [\Vert \hat{y}_{k+1} \Vert ^2 |\mathcal{Q}_{k-1}] \notag \\
    \le& (1 - \mu_g \beta_k ) \Vert \hat{y}_k \Vert ^2 + \beta_k^2 \mathbb{E} [\Vert \psi_k \Vert ^2 | \mathcal{Q}_{k-1}] \notag \\
    &+ L_g^2 ( 2 L_\lambda^2 + L_\lambda + 3 ) \beta_k^2 \Vert \hat{y}_k \Vert ^2 + \frac{L_g^2 \beta_k}{\mu_g} \Vert \hat{x}_k \Vert ^2 +  L_g^2 ( L_\lambda + 2) \beta_k^2 \Vert \hat{x}_k \Vert ^2 .
\end{align}
\end{lemma}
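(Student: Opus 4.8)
The plan is to start from the displayed recursion for $\hat{y}_{k+1}$ and treat its three contributions---the monotone drift $-\beta_k g(\lambda(y_k),y_k)$, the correction $\beta_k\bigl(g(\lambda(y_k),y_k)-g(x_k,y_k)\bigr)$, and the noise $-\beta_k\psi_k$---separately. First I would observe that $x_k$ and $y_k$ are $\mathcal{Q}_{k-1}$-measurable (they are built from $\xi_0,\psi_0,\dots,\xi_{k-1},\psi_{k-1}$), whereas $\psi_k$ has zero conditional mean. Writing $u_k = \hat{y}_k - \beta_k g(\lambda(y_k),y_k) + \beta_k\bigl(g(\lambda(y_k),y_k)-g(x_k,y_k)\bigr)$, which is $\mathcal{Q}_{k-1}$-measurable, the cross term $\langle u_k, \psi_k\rangle$ vanishes under $\mathbb{E}[\,\cdot\mid\mathcal{Q}_{k-1}]$, giving the exact identity
\begin{align}
\mathbb{E}[\Vert\hat{y}_{k+1}\Vert^2 \mid \mathcal{Q}_{k-1}] = \Vert u_k\Vert^2 + \beta_k^2\,\mathbb{E}[\Vert\psi_k\Vert^2 \mid \mathcal{Q}_{k-1}].
\end{align}
This already produces the $\beta_k^2\,\mathbb{E}[\Vert\psi_k\Vert^2\mid\mathcal{Q}_{k-1}]$ term with coefficient one, so the remaining work is to bound $\Vert u_k\Vert^2$.

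Next I would expand $\Vert u_k\Vert^2$ by grouping $u_k = \bigl(\hat{y}_k - \beta_k g(\lambda(y_k),y_k)\bigr) + \beta_k\bigl(g(\lambda(y_k),y_k)-g(x_k,y_k)\bigr)$ and handling the leading square $\Vert\hat{y}_k - \beta_k g(\lambda(y_k),y_k)\Vert^2$ via the $1$-point strong monotonicity of $g$ about $y^*$: since $\langle\hat{y}_k, g(\lambda(y_k),y_k)\rangle \ge \mu_g\Vert\hat{y}_k\Vert^2$, the inner-product term contributes $-2\mu_g\beta_k\Vert\hat{y}_k\Vert^2$. The $\beta_k$-linear cross term $2\beta_k\langle\hat{y}_k,\, g(\lambda(y_k),y_k)-g(x_k,y_k)\rangle$ is bounded by Cauchy--Schwarz together with the Lipschitz continuity of $g$ in its first argument (the $y$-argument is held fixed, so only $\Vert\hat{x}_k\Vert$ appears, as $\Vert g(\lambda(y_k),y_k)-g(x_k,y_k)\Vert\le L_g\Vert\hat{x}_k\Vert$) and then Young's inequality with weight tuned to $\mu_g$, namely $2\beta_k L_g\Vert\hat{y}_k\Vert\Vert\hat{x}_k\Vert \le \mu_g\beta_k\Vert\hat{y}_k\Vert^2 + (L_g^2/\mu_g)\beta_k\Vert\hat{x}_k\Vert^2$. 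Combining, the $\beta_k$-order coefficient of $\Vert\hat{y}_k\Vert^2$ collapses to $-2\mu_g+\mu_g=-\mu_g$, reproducing the $(1-\mu_g\beta_k)\Vert\hat{y}_k\Vert^2$ factor, while the $\Vert\hat{x}_k\Vert$ part gives exactly $(L_g^2/\mu_g)\beta_k\Vert\hat{x}_k\Vert^2$.

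Finally I would collect the $\mathcal{O}(\beta_k^2)$ contributions. For $\beta_k^2\Vert g(\lambda(y_k),y_k)\Vert^2$ I would use $\lambda(y^*)=x^*$ (forced by the uniqueness in the assumption defining $\lambda$ together with $f(x^*,y^*)=0$) and $g(x^*,y^*)=0$, so that the Lipschitz continuity of $g$ and of $\lambda$ yields $\Vert g(\lambda(y_k),y_k)\Vert\le L_g(L_\lambda+1)\Vert\hat{y}_k\Vert$; the correction square contributes $\beta_k^2 L_g^2\Vert\hat{x}_k\Vert^2$; and the remaining $\beta_k^2$ cross term $-2\beta_k^2\langle g(\lambda(y_k),y_k),\, g(\lambda(y_k),y_k)-g(x_k,y_k)\rangle$ is handled by Cauchy--Schwarz and Young's inequality. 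Summing these reproduces the $\Vert\hat{x}_k\Vert^2$ coefficient $L_g^2(L_\lambda+2)$ exactly, and a $\Vert\hat{y}_k\Vert^2$ coefficient bounded by $L_g^2(L_\lambda+1)(L_\lambda+2)$, which is dominated by the stated $L_g^2(2L_\lambda^2+L_\lambda+3)$ (the harmless slack $(L_\lambda-1)^2L_g^2$ is absorbed there). The step I expect to be the main obstacle is the bookkeeping that forces the $\beta_k$-order $\Vert\hat{y}_k\Vert^2$ coefficient to be precisely $-\mu_g$: this hinges on applying monotonicity only to $g(\lambda(y_k),y_k)$---for which the assumption on $g$ provides it---rather than to $g(x_k,y_k)$, which is exactly why the recursion is pre-decomposed, and on choosing the Young weight equal to $\mu_g$ so that the induced $+\mu_g\beta_k\Vert\hat{y}_k\Vert^2$ consumes exactly half of the monotone gain.
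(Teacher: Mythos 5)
Your proposal is correct and follows essentially the same route as the paper: the same decomposition of $\hat{y}_{k+1}$ into drift, correction, and noise, the same use of $1$-point strong monotonicity plus Young's inequality with weight $\mu_g$ to get the $-\mu_g\beta_k\Vert\hat{y}_k\Vert^2$ term and the $(L_g^2/\mu_g)\beta_k\Vert\hat{x}_k\Vert^2$ term, and the same vanishing of the martingale cross terms. The only cosmetic difference is that you bound $\Vert g(\lambda(y_k),y_k)\Vert^2$ by $L_g^2(L_\lambda+1)^2\Vert\hat{y}_k\Vert^2$ where the paper uses $2L_g^2(L_\lambda^2+1)\Vert\hat{y}_k\Vert^2$; both are absorbed into the stated coefficient $L_g^2(2L_\lambda^2+L_\lambda+3)$, as you correctly note.
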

\begin{proof}
    We provide a detailed proof in Appendix \ref{supp:proof_of_lem:2}.
\end{proof}

We define $c = 4\frac{L_g^2}{\mu_f \mu_g}$. The Lyapunov function is defined as follows and denoted as $V_k$,
\begin{align}
    V_k = V(\hat{x}_k, \hat{y}_k) = c \frac{\beta}{\alpha} \Vert \hat{x}_k \Vert^2 + \Vert \hat{y}_k \Vert^2.
\end{align}

\begin{lemma}
\label{supp:lem:3}
    Suppose that Assumptions \ref{supp:asm:lambda}--\ref{supp:asm:xi-psi} hold. Let $x_k$ and $y_k$ be generated by (\ref{supp:1}). Suppose $\alpha_k, \beta_k$ decreases (not necessarily strictly), and 
    \begin{align}
        \frac{\alpha_k}{\beta_k} \ge \frac{4}{\mu_f} (2 L_\lambda L_g + \frac{2}{\mu_g} L_\lambda^2 L_g^2 (L_\lambda + 1)^2 ) \lor 2 c \lor \frac{\mu_g}{\mu_f} \lor \frac{\alpha_0}{\beta_0} ,  
    \end{align}
    define 
    \begin{align}
        C(\alpha_0, \beta_0) =& L_{f}^2 + 4 L_\lambda^2 L_g^2 \frac{\beta_0^2}{\alpha_0^2} + 2 L_f L_\lambda L_g \frac{\beta_0}{\alpha_0} + \frac{2}{\mu_g} L_f^2 L_\lambda^2 L_g^2 (L_\lambda + 1)^2 \beta_0 + \frac{1}{c} L_g^2 ( L_\lambda + 2) \frac{\beta_0}{\alpha_0} \notag \\
        &+ c 4 L_\lambda^2  L_g^2 ( L_{\lambda} + 1)^2 \frac{\beta_0^3}{\alpha_0^3} + L_g^2 ( 2 L_\lambda^2 + L_\lambda + 3 ) \frac{\beta_0^2}{\alpha_0^2}.
    \end{align}
    Then we have
    \begin{align}
        &\mathbb{E} [ V_{k+1} ] \notag \\
        \le& ( 1 - \frac{1}{2} \mu_g \beta_k ) \mathbb{E} [ V_k ] + C(\alpha_0, \beta_0) \alpha_k^2 \mathbb{E} [ V_k ] \notag \\
        &+ c 2 \alpha_k \beta_k \mathbb{E}[\Vert \xi_k \Vert^2] + \bigg( c ( \frac{2}{\mu_f} L_f^2 \frac{\beta_k^3}{\alpha_k^2} + \frac{2}{\mu_f} L_f^2 L_\lambda^2 \beta_k^3 + 2 L_\lambda^2 \frac{\beta_k^3}{\alpha_k} ) + \beta_k^2 \bigg) \mathbb{E} [\Vert \psi_k \Vert ^2].
    \end{align}
\end{lemma}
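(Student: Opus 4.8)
The plan is to fuse Lemmas~\ref{supp:lem:1} and~\ref{supp:lem:2} into a single recursion for $V_k$ and then reorganize the right-hand side into the four ``atoms'' $\Vert\hat x_k\Vert^2$, $\Vert\hat y_k\Vert^2$, $\mathbb{E}[\Vert\xi_k\Vert^2\mid\mathcal{Q}_{k-1}]$, and $\mathbb{E}[\Vert\psi_k\Vert^2\mid\mathcal{Q}_{k-1}]$. Concretely, I would write $\mathbb{E}[V_{k+1}\mid\mathcal{Q}_{k-1}] = c\tfrac{\beta_k}{\alpha_k}\,\mathbb{E}[\Vert\hat x_{k+1}\Vert^2\mid\mathcal{Q}_{k-1}] + \mathbb{E}[\Vert\hat y_{k+1}\Vert^2\mid\mathcal{Q}_{k-1}]$, substitute the two lemma bounds, collect the coefficient multiplying each atom, and finally take total expectations (legitimate since the bounds hold a.s.\ and by the tower property; the zero-mean/martingale property of the noise has already been consumed inside Lemmas~\ref{supp:lem:1}--\ref{supp:lem:2}).

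The crux is the bookkeeping on the $\Vert\hat x_k\Vert^2$ coefficient. Its leading piece is the fast-scale contraction $-c\tfrac{\beta_k}{\alpha_k}\mu_f\alpha_k = -c\mu_f\beta_k$, and I would spend this ``budget'' in three parts. First, the choice $c=4L_g^2/(\mu_f\mu_g)$ makes the cross-coupling term $\tfrac{L_g^2}{\mu_g}\beta_k$ carried over from Lemma~\ref{supp:lem:2} equal to exactly $\tfrac14$ of the budget, so it cancels outright. Second, the $O(\beta_k)$ growth term $c\tfrac{\beta_k}{\alpha_k}\big(2L_\lambda L_g+\tfrac{2}{\mu_g}L_\lambda^2L_g^2(L_\lambda+1)^2\big)\beta_k$ is dominated by another quarter of the budget precisely because $\tfrac{\alpha_k}{\beta_k}\ge\tfrac{4}{\mu_f}\big(2L_\lambda L_g+\tfrac{2}{\mu_g}L_\lambda^2L_g^2(L_\lambda+1)^2\big)$. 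Third, the leftover half of the budget, $\tfrac12 c\mu_f\beta_k$, must dominate the target slow-rate loss $\tfrac12 c\mu_g\tfrac{\beta_k^2}{\alpha_k}$, which holds exactly when $\tfrac{\alpha_k}{\beta_k}\ge\tfrac{\mu_g}{\mu_f}$; this yields the factor $(1-\tfrac12\mu_g\beta_k)$ on the $x$ part. This is precisely why the hypothesis on $\alpha_k/\beta_k$ is stated as a maximum of several quantities. The genuinely higher-order terms from Lemma~\ref{supp:lem:1} (orders $\alpha_k^2$, $\alpha_k\beta_k$, $\beta_k^2$, $\alpha_k^2\beta_k$) together with the $\beta_k^2$ term from Lemma~\ref{supp:lem:2} are each rewritten as a constant times $\alpha_k^2$ using $\beta_k\le\beta_0$ and $\tfrac{\beta_k}{\alpha_k}\le\tfrac{\beta_0}{\alpha_0}$ (the latter from $\tfrac{\alpha_k}{\beta_k}\ge\tfrac{\alpha_0}{\beta_0}$); summing the resulting constants is exactly what produces $C(\alpha_0,\beta_0)$.

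For the $\Vert\hat y_k\Vert^2$ coefficient the mechanism is simpler: Lemma~\ref{supp:lem:2} supplies the slow contraction $-\mu_g\beta_k$, while Lemma~\ref{supp:lem:1} feeds back a positive $c\tfrac{\beta_k}{\alpha_k}\mu_g\beta_k$; the condition $\tfrac{\alpha_k}{\beta_k}\ge 2c$ (equivalently $c\beta_k/\alpha_k\le\tfrac12$) ensures the net is $-\tfrac12\mu_g\beta_k$, matching the target, and the residual $O(\beta_k^2)$ pieces again fold into $C(\alpha_0,\beta_0)\alpha_k^2$. The noise atoms pass through essentially unchanged: the $2\alpha_k^2\,\mathbb{E}[\Vert\xi_k\Vert^2]$ of Lemma~\ref{supp:lem:1}, weighted by $c\beta_k/\alpha_k$, becomes the stated $2c\,\alpha_k\beta_k\,\mathbb{E}[\Vert\xi_k\Vert^2]$, while the two $\psi$-contributions (the $\tfrac{2}{\mu_f\alpha_k}(1+L_f^2\alpha_k^2)L_f^2\beta_k^2+2L_\lambda^2\beta_k^2$ from Lemma~\ref{supp:lem:1}, weighted, plus $\beta_k^2$ from Lemma~\ref{supp:lem:2}) reassemble into the displayed bracket after bounding $1+L_f^2\alpha_k^2$ by a constant for small step sizes.

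The main obstacle is not any single inequality but the \emph{simultaneous} allocation: one finite budget $-c\mu_f\beta_k$ must cover three competing demands at once, so the argument only closes because $c$ is tuned to annihilate the cross term exactly and because the lower bound on $\alpha_k/\beta_k$ is taken as the maximum of the three thresholds above. The remaining labor --- checking termwise that every factor $\beta_k^{p}/\alpha_k^{q}$ is uniformly dominated by $\alpha_k^2$ times the appropriate power of $\beta_0/\alpha_0$, so that the constants assemble into exactly the displayed $C(\alpha_0,\beta_0)$ --- is routine but unavoidable.
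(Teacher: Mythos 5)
Your proposal is correct and follows essentially the same route as the paper's proof: weight Lemma~\ref{supp:lem:1} by $c\beta_k/\alpha_k$, add Lemma~\ref{supp:lem:2}, and split the contraction budget $-c\mu_f\beta_k\Vert\hat x_k\Vert^2$ exactly as you describe (the paper merges your first two quarters into the single condition $\tfrac12 c\mu_f\beta_k \ge c(2L_\lambda L_g+\tfrac{2}{\mu_g}L_\lambda^2L_g^2(L_\lambda+1)^2)\tfrac{\beta_k^2}{\alpha_k}+\tfrac{L_g^2}{\mu_g}\beta_k$, and uses $\tfrac{\alpha_k}{\beta_k}\ge 2c$ and $\tfrac{\alpha_k}{\beta_k}\ge\tfrac{\mu_g}{\mu_f}$ precisely as you do), before folding the higher-order terms into $C(\alpha_0,\beta_0)\alpha_k^2 V_k$ via $\beta_k\le\beta_0$ and $\beta_k/\alpha_k\le\beta_0/\alpha_0$ and taking total expectations. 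The only cosmetic difference is that the paper expands $(1+L_f^2\alpha_k^2)$ rather than bounding it by a constant, which is how the extra $\beta_k^3$ term inside the $\psi$-bracket arises.
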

\begin{proof}
    We provide a detailed proof in Appendix \ref{supp:proof_of_lem:3}.
\end{proof}

\section{Convergence Under State-Dependent Noise Assumptions}\label{supp:sec:convergence_state}

In Assumption \ref{supp:asm:xi-psi}, we define a linear programming function
\begin{align}
    m(x) =& \frac{(1+\delta_{11}) x - \delta_{11}}{1 - \delta_{11}} \land \frac{x}{1 - \delta_{12}} \land \frac{(2 - \delta_{21}) (1-x) }{1 - \delta_{21}} \land \frac{2(1-x)}{ 1 - \delta_{22}} ,
\end{align}
which is set to be the minimum of four linear functions. $m(x)$ depending on all four parameters $\delta_{ij}$ is a delicate function set to satisfy
\begin{align}
    & -1 + 2a \notag \\
    \ge& -1+a+t+(1-a-t)\delta_{11} \lor -1+a+t-t\delta_{12} \notag \\
    &\lor -3+4a+t+(1-a-t)\delta_{21} \lor -3+4a+t-t\delta_{22},
\end{align}
where $a=\argmax_{x \in (\frac{1}{2}, 1]} m(x)$ and $t = \max_{x \in (\frac{1}{2}, 1]} m(x)$.

\begin{thm}
\label{supp:thm:1}
    Suppose that Assumptions \ref{supp:asm:lambda}--\ref{supp:asm:xi-psi} hold. Let $x_k$ and $y_k$ be generated by (\ref{supp:1}). We assume
    \begin{gather}
        b=1 , a=\argmax_{x \in (\frac{1}{2}, 1]} m(x) , t = \max_{x \in (\frac{1}{2}, 1]} m(x),  \\
        \frac{\alpha}{\beta} \ge \frac{4}{\mu_f} (2 L_\lambda L_g + \frac{2}{\mu_g} L_\lambda^2 L_g^2 (L_\lambda + 1)^2 ) \lor 2 c \lor \frac{\mu_g}{\mu_f} \lor 1 ,  \\
        \beta \ge \frac{2}{\mu_g}(2a+t) .
    \end{gather}
    Let
    \begin{align}
        C_{\alpha\beta} =& L_{f}^2 + 4 L_\lambda^2 L_g^2 \frac{\beta^2}{\alpha^2} + 2 L_f L_\lambda L_g \frac{\beta}{\alpha} + \frac{2}{\mu_g} L_f^2 L_\lambda^2 L_g^2 (L_\lambda + 1)^2 \beta + \frac{1}{c} L_g^2 ( L_\lambda + 2) \frac{\beta}{\alpha} \notag \\
        &+ c 4 L_\lambda^2  L_g^2 ( L_{\lambda} + 1)^2 \frac{\beta^3}{\alpha^3} + L_g^2 ( 2 L_\lambda^2 + L_\lambda + 3 ) \frac{\beta^2}{\alpha^2} \ge C(\alpha_0, \beta_0) .
    \end{align}
    Set 
    \begin{gather}
        k_0 \ge \alpha^{1/a} \lor \left( \frac{\alpha^2}{\beta} \right)^{1/(2a-1)} \lor \frac{1}{2^{1/t} - 1} \lor \frac{1}{2^{1/2a} - 1} \lor (6 C_{\alpha\beta} \alpha^2)^\frac{1}{2a-1} .
    \end{gather}
    Define \begin{align}
        C_1(M) =& C_{\alpha\beta} \alpha^2 2 M ,\\
        C_2(M) =& c 2 \alpha \beta \Gamma_{11} ( \frac{1}{c} \frac{\alpha}{\beta} 2 M )^{\delta_{11}} + c 2 \alpha \beta \Gamma_{12} ( 2 M )^{\delta_{12}} \notag \\
        &+ \frac{\beta^3}{\alpha^2} \bigg( c ( \frac{2}{\mu_f} L_f^2  + \frac{2}{\mu_f} L_f^2 L_\lambda^2 + 2 L_\lambda^2 ) + 1 \bigg) \Gamma_{21} ( \frac{1}{c} \frac{\alpha}{\beta} 2 M )^{\delta_{21}} \notag \\
        &+ \frac{\beta^3}{\alpha^2} \bigg( c ( \frac{2}{\mu_f} L_f^2  + \frac{2}{\mu_f} L_f^2 L_\lambda^2 + 2 L_\lambda^2 ) + 1 \bigg) \Gamma_{22} ( 2 M )^{\delta_{22}} .
    \end{align}
    Note that $\delta_{ij} < 1$, there exists a $M$ s.t.
    \begin{gather}
        M \ge 3 k_0^t V_0 \lor \frac{3}{a} C_2(M) .
    \end{gather}
    Together, we have $\forall k \ge 0$,
    \begin{align}
        \mathbb{E} [ V_k ] \le \frac{M}{(k + k_0)^t} .
    \end{align}
\end{thm}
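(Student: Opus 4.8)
The plan is to establish $\mathbb{E}[V_k]\le M(k+k_0)^{-t}$ by induction on $k$, driving the argument with the one-step bound of Lemma \ref{supp:lem:3} and using the linear-programming function $m(x)$ to balance the stochastic error terms. Throughout I abbreviate $n=k+k_0$. The base case $k=0$ is immediate: since $\mathbb{E}[V_0]=V_0$ and $M\ge 3k_0^tV_0$, we get $\mathbb{E}[V_0]\le M k_0^{-t}$. For the inductive step I assume $\mathbb{E}[V_k]\le Mn^{-t}$ and insert it into the recursion of Lemma \ref{supp:lem:3},
\begin{align*}
\mathbb{E}[V_{k+1}]\le\Big(1-\tfrac12\mu_g\beta_k\Big)\mathbb{E}[V_k]+C_{\alpha\beta}\alpha_k^2\,\mathbb{E}[V_k]+2c\,\alpha_k\beta_k\,\mathbb{E}\Vert\xi_k\Vert^2+\Big(\cdots\Big)\mathbb{E}\Vert\psi_k\Vert^2,
\end{align*}
where I use $C(\alpha_0,\beta_0)\le C_{\alpha\beta}$ and that $\alpha_k,\beta_k$ are decreasing, so the coefficients in $C_{\alpha\beta}$ dominate those at step $k$.

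The crux is controlling the state-dependent variances. Taking total expectations in Assumption \ref{supp:asm:xi-psi} and applying Jensen's inequality to the concave maps $z\mapsto z^{\delta_{ij}}$ gives $\mathbb{E}\Vert\hat x_k\Vert^{2\delta_{ij}}\le(\mathbb{E}\Vert\hat x_k\Vert^2)^{\delta_{ij}}$ and similarly for $\hat y_k$. From the definition of $V_k$ I bound $\Vert\hat x_k\Vert^2\le\frac{\alpha_k}{c\beta_k}V_k$ and $\Vert\hat y_k\Vert^2\le V_k$, substitute the induction hypothesis, and replace $n^{-t}$ by $2(n+1)^{-t}$ using $k_0\ge(2^{1/t}-1)^{-1}$ (which forces $(n+1)^t\le 2n^t$). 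This reproduces precisely the constants $(\tfrac1c\tfrac\alpha\beta 2M)^{\delta_{ij}}$ and $(2M)^{\delta_{ij}}$ appearing in $C_2(M)$, each multiplied by an explicit power of $(n+1)$ that comes from $\alpha_k\beta_k$, from the $\psi$-coefficient $\beta_k^3/\alpha_k^2$ (which dominates the other $\psi$-terms since $a>\tfrac12$), and from the factors $(n+1)^{(1-a-t)\delta_{ij}}$ or $(n+1)^{-t\delta_{ij}}$.

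The heart of the argument is to show that every error term decays at least as fast as the leading decrement $\tfrac12\mu_g\beta_k\mathbb{E}[V_k]\sim(n+1)^{-(t+1)}$. The term with coefficient $C_1(M)=2C_{\alpha\beta}\alpha^2 M$ is of order $(n+1)^{-(2a+t)}$, hence strictly faster because $a>\tfrac12$, and is absorbed via $k_0\ge(6C_{\alpha\beta}\alpha^2)^{1/(2a-1)}$. For the four noise contributions, requiring their $(n+1)$-exponents to be at least $t+1$ reduces exactly to the four linear inequalities defining $m(x)$ at $x=a$: the $\Gamma_{11}$ term yields $-1+2a\ge-1+a+t+(1-a-t)\delta_{11}$, i.e. $t\le\frac{(1+\delta_{11})a-\delta_{11}}{1-\delta_{11}}$; the $\Gamma_{21}$ term yields $t\le\frac{(2-\delta_{21})(1-a)}{1-\delta_{21}}$; and the $\Gamma_{12},\Gamma_{22}$ terms give $t\le\frac{a}{1-\delta_{12}}$ and $t\le\frac{2(1-a)}{1-\delta_{22}}$. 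Choosing $t=\max_x m(x)$ at $x=a=\argmax$ is exactly what makes all four hold at the largest admissible rate. To compare the two sides I then invoke Bernoulli's inequality in the form $(1+\tfrac1n)^{-t}\ge 1-\tfrac tn$, so that the gap $Mn^{-t}-M(n+1)^{-t}\le tM\,n^{-(t+1)}$.

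Finally I assemble the estimate: it suffices to verify $\tfrac12\mu_g\beta_k\, Mn^{-t}\ge tM\,n^{-(t+1)}+C_{\alpha\beta}\alpha_k^2 M n^{-t}+(\text{noise})$. The condition $\beta\ge\frac{2}{\mu_g}(2a+t)$ makes the leading decrement exceed the Bernoulli gap by a margin of order $2aM(n+1)^{-(t+1)}$, and the conditions on $k_0$ together with $M\ge\frac{3}{a}C_2(M)$ — solvable because $C_2(M)=\mathcal{O}\!\left(M^{\delta_{11}\lor\delta_{12}\lor\delta_{21}\lor\delta_{22}}\right)$ with every $\delta_{ij}<1$ — split this margin into thirds that cover the $C_{\alpha\beta}\alpha_k^2$ contribution and the noise, closing the induction. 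I expect the main obstacle to be the simultaneous exponent bookkeeping for all four $\delta_{ij}$: each noise term couples a power of $\alpha_k\beta_k$ or $\beta_k^3/\alpha_k^2$ with a power of $M(n+1)^{-t}$ inherited from $V_k$, and showing that the worst case over the four is governed exactly by $m(a)$ — while tracking the $n$ versus $n+1$ discrepancies so the constants in $C_1(M),C_2(M)$ align — is the delicate part; the remaining estimates are careful but routine.
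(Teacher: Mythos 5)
Your proposal is correct and follows essentially the same route as the paper's proof: induction driven by Lemma \ref{supp:lem:3}, Jensen's inequality to turn the state-dependent variance bounds into powers of $\mathbb{E}[V_k]$, Bernoulli's inequality to handle the shift from $(k+k_0)^{-t}$ to $(k+1+k_0)^{-t}$, and the four exponent inequalities encoded by $m(x)$ at $x=a$, $t=m(a)$, with the conditions on $k_0$ and $M$ playing exactly the roles you assign them. The only mechanical difference is in closing the induction: the paper multiplies the recursion by $(k+1+k_0)^{2a+t}$ and telescopes, splitting the result into three pieces each bounded by $M/3$, whereas you absorb the Bernoulli gap, the $C_{\alpha\beta}\alpha_k^2$ term, and the noise directly into the per-step decrement $\tfrac{1}{2}\mu_g\beta_k\,\mathbb{E}[V_k]$; both versions close under the stated hypotheses.
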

\begin{proof}
    We provide a detailed proof in Appendix \ref{supp:proof_of_thm:1}.
\end{proof}

\begin{corollary}
\label{supp:cor:1}
    Suppose that Assumptions \ref{supp:asm:lambda}--\ref{supp:asm:xi-psi} hold. Let $x_k$ and $y_k$ be generated by (\ref{supp:1}). Moreover, we assume $\delta_{11} = \delta_{12}$ and $\delta_{21} = \delta_{22}$. Recall that $\Delta_{ij} = 1 - \delta_{ij}$, then under the condition of Theorem \ref{supp:thm:1}, 
    \begin{align}
        a = \frac{\Delta_{11} + \Delta_{22}}{ \Delta_{11} + 2 \Delta_{22}}, t = \frac{1 + \Delta_{22}}{\Delta_{11} + 2 \Delta_{22}}.
    \end{align}
    There exist $\alpha, \beta, k_0$, and $M$, s.t. $\forall k \ge 0$,
    \begin{align}
        \mathbb{E} [ V_k ] \le \frac{M}{(k + k_0)^t} .
    \end{align}
\end{corollary}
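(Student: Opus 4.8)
The plan is to use the symmetry hypotheses $\delta_{11}=\delta_{12}$ and $\delta_{21}=\delta_{22}$ (equivalently $\Delta_{11}=\Delta_{12}$ and $\Delta_{21}=\Delta_{22}$) to collapse the four-term minimum defining $m(x)$ into a two-term minimum, and then to maximize the resulting tent-shaped function explicitly over $(\tfrac12,1]$. First I would compare the four affine pieces pointwise on the interval. The first piece minus the second equals $\frac{(1+\delta_{11})x-\delta_{11}-x}{\Delta_{11}}=\frac{\delta_{11}(x-1)}{\Delta_{11}}\le 0$ for $x\le 1$, so the first piece never exceeds the second. Likewise the third piece minus the fourth equals $\frac{-\delta_{21}(1-x)}{\Delta_{22}}\le 0$, so the third piece never exceeds the fourth. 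Hence on all of $(\tfrac12,1]$,
\begin{align}
m(x) = \frac{(1+\delta_{11})x-\delta_{11}}{\Delta_{11}} \land \frac{(2-\delta_{21})(1-x)}{\Delta_{22}}.
\end{align}

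The surviving first piece is strictly increasing in $x$ and the surviving third piece is strictly decreasing, so their pointwise minimum is concave and attains its maximum at the unique point $a$ where the two are equal. I would set the two expressions equal and solve the resulting linear equation, using the identities $1+\delta_{11}=2-\Delta_{11}$, $2-\delta_{21}=1+\Delta_{22}$, and $\delta_{11}=1-\Delta_{11}$ to simplify. The coefficient of $x$ collapses to $\Delta_{11}+2\Delta_{22}$ and the constant side to $\Delta_{11}+\Delta_{22}$, giving $a=\frac{\Delta_{11}+\Delta_{22}}{\Delta_{11}+2\Delta_{22}}$. Substituting $a$ back into the increasing piece and simplifying its numerator to $\Delta_{11}(1+\Delta_{22})$ yields $t=m(a)=\frac{1+\Delta_{22}}{\Delta_{11}+2\Delta_{22}}$, matching the claimed formulas.

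It remains to check that $a$ is an interior maximizer and to invoke Theorem \ref{supp:thm:1}. The inequality $a>\tfrac12$ is equivalent to $\Delta_{11}>0$ and the inequality $a<1$ is equivalent to $\Delta_{22}>0$, both of which hold because $\delta_{ij}<1$; thus $a\in(\tfrac12,1)$ and the interior crossing is the genuine argmax. With these explicit values of $a$ and $t$ (and $b=1$) all the hypotheses of Theorem \ref{supp:thm:1} are satisfied, so the existence of suitable constants $\alpha,\beta,k_0,M$ and the bound $\mathbb{E}[V_k]\le M/(k+k_0)^t$ follow immediately from that theorem.

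I expect the only genuinely delicate step to be the pointwise domination argument that reduces four pieces to two: one must verify on the entire interval---not merely at the crossing---that the discarded second and fourth pieces stay above the retained first and third, so that they cannot bind at the maximizer. Once this reduction is established, everything else is elementary one-variable optimization and linear algebra.
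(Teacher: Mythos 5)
Your proposal is correct and follows essentially the same route as the paper's proof: under $\delta_{11}=\delta_{12}$ and $\delta_{21}=\delta_{22}$ the second and fourth affine pieces are dominated pointwise by the first and third, so $m(x)$ reduces to the minimum of one increasing and one decreasing piece, whose crossing gives $a=\frac{\Delta_{11}+\Delta_{22}}{\Delta_{11}+2\Delta_{22}}$ and $t=\frac{1+\Delta_{22}}{\Delta_{11}+2\Delta_{22}}$, after which Theorem \ref{supp:thm:1} applies. Your added checks (concavity of the two-piece minimum and $a\in(\tfrac12,1)$ via $\Delta_{11},\Delta_{22}>0$) are consistent with, and slightly more explicit than, the paper's argument.
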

\begin{proof}
    We provide a detailed proof in Appendix \ref{supp:proof_of_cor:1}.
\end{proof}

Now we investigate the case in Assumption \ref{supp:asm:xi-psi_2} with constant learning rates $\alpha_k = \alpha$ and $\beta_k = \beta$. We define $c = 4\frac{L_g^2}{\mu_f \mu_g}$. The Lyapunov function is defined as follows and denoted as $V_k$,
\begin{align}
    V_k = V(\hat{x}_k, \hat{y}_k) = c \frac{\beta}{\alpha} \Vert \hat{x}_k \Vert^2 + \Vert \hat{y}_k \Vert^2.
\end{align}

\begin{thm}
\label{supp:thm:2}
    Suppose that Assumptions \ref{supp:asm:lambda}--\ref{supp:asm:g},\ref{supp:asm:xi-psi_2} hold. Let $x_k$ and $y_k$ be generated by (\ref{supp:1}). Let 
    \begin{align}
        C_\beta =& L_{f}^2 + 4 L_\lambda^2 L_g^2 + 2 L_f L_\lambda L_g + \frac{2}{\mu_g} L_f^2 L_\lambda^2 L_g^2 (L_\lambda + 1)^2 \beta + \frac{1}{c} L_g^2 ( L_\lambda + 2) \notag \\
        &+ c 4 L_\lambda^2  L_g^2 ( L_{\lambda} + 1)^2 + L_g^2 ( 2 L_\lambda^2 + L_\lambda + 3 ) = B_1 + B_2 \beta,
    \end{align}
    and we set $\omega = \frac{\alpha}{\beta}$, 
    \begin{gather}
        D_1 (\omega) = - \frac{1}{2} \mu_g + c \frac{2}{\mu_f} L_f^2 \frac{1}{\omega^2} \left( \Gamma_{21} \frac{1}{c} \omega + \Gamma_{22} \right) , \\
        D_2 (\omega) = B_1 \omega^2 + c 2 \omega \left( \Gamma_{11} \frac{1}{c} \omega + \Gamma_{12} \right) + ( c 2 L_\lambda^2 \frac{1}{\omega} + 1 ) \left( \Gamma_{21} \frac{1}{c} \omega + \Gamma_{22} \right) , \\
        D_3 (\omega) = B_2 \omega^2 + c \frac{2}{\mu_f} L_f^2 L_\lambda^2 \left( \Gamma_{21} \frac{1}{c} \omega + \Gamma_{22} \right) .
    \end{gather}
    We assume 
    \begin{gather}
        \omega \ge \frac{4}{\mu_f} (2 L_\lambda L_g + \frac{2}{\mu_g} L_\lambda^2 L_g^2 (L_\lambda + 1)^2 ) \lor 2 c \lor \frac{\mu_g}{\mu_f} \lor 1 , \\
        D_1 (\omega) \le - \frac{1}{4} \mu_g.
    \end{gather}
    Then there exists $\beta > 0$ s.t.
    \begin{align}
        e^{-\epsilon} \triangleq 1 + D_1 (\omega) \beta + D_2 (\omega) \beta^2 + D_3 (\omega) \beta^3 < 1.
    \end{align}
    Together, we have $\forall k \ge 0$,
    \begin{align}
        \mathbb{E} [ V_k ] \le e^{-\epsilon k} V_0 .
    \end{align}
\end{thm}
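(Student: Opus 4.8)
The plan is to specialize the Lyapunov recursion of Lemma~\ref{supp:lem:3} to constant step sizes and then extract a scalar contraction. I first note that although Lemma~\ref{supp:lem:3} is stated under Assumption~\ref{supp:asm:xi-psi}, its derivation retains $\mathbb{E}[\Vert \xi_k \Vert^2]$ and $\mathbb{E}[\Vert \psi_k \Vert^2]$ as generic quantities and never uses the specific polynomial form of the variance bounds; hence the same recursion holds verbatim under Assumption~\ref{supp:asm:xi-psi_2}. Setting $\alpha_k \equiv \alpha$ and $\beta_k \equiv \beta$, the ratio $\beta_k/\alpha_k = 1/\omega$ is constant in $k$, so $C(\alpha_0,\beta_0)$ is constant; bounding each factor $\omega^{-j} \le 1$ (using $\omega \ge 1$) gives $C(\alpha_0,\beta_0) \le C_\beta = B_1 + B_2\beta$, which I substitute into the recursion since all terms are nonnegative.

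Next I convert the noise terms into multiples of $\mathbb{E}[V_k]$. Since $V_k = c\frac{\beta}{\alpha}\Vert \hat{x}_k \Vert^2 + \Vert \hat{y}_k \Vert^2$, we have $\Vert \hat{y}_k \Vert^2 \le V_k$ and $\Vert \hat{x}_k \Vert^2 \le \frac{\omega}{c}V_k$. Taking total expectations of the conditional variance bounds of Assumption~\ref{supp:asm:xi-psi_2} via the tower property yields $\mathbb{E}[\Vert \xi_k \Vert^2] \le (\Gamma_{11}\frac{1}{c}\omega + \Gamma_{12})\mathbb{E}[V_k]$ and $\mathbb{E}[\Vert \psi_k \Vert^2] \le (\Gamma_{21}\frac{1}{c}\omega + \Gamma_{22})\mathbb{E}[V_k]$. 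Substituting these, together with $\alpha = \omega\beta$, into the recursion makes every term on the right a $k$-independent multiple of $\mathbb{E}[V_k]$.

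The bookkeeping step is to collect these constants by powers of $\beta$ with $\omega$ held fixed, using $\frac{\beta^3}{\alpha^2} = \frac{\beta}{\omega^2}$, $\frac{\beta^3}{\alpha} = \frac{\beta^2}{\omega}$, $\alpha\beta = \omega\beta^2$, and $\alpha^2 = \omega^2\beta^2$. The $-\frac{1}{2}\mu_g\beta$ from the drift combines with the order-$\beta$ part of the $\psi_k$ term into $D_1(\omega)$; the $\xi_k$ term and the $B_1$-part of $C_\beta\alpha^2$, together with the order-$\beta^2$ part of the $\psi_k$ term, form $D_2(\omega)$; and the $B_2$-part of $C_\beta\alpha^2$ with the remaining cubic piece of the $\psi_k$ term form $D_3(\omega)$. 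The coefficient of $\mathbb{E}[V_k]$ is thus exactly $1 + D_1(\omega)\beta + D_2(\omega)\beta^2 + D_3(\omega)\beta^3 \triangleq e^{-\epsilon}$, giving $\mathbb{E}[V_{k+1}] \le e^{-\epsilon}\mathbb{E}[V_k]$.

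It remains to choose $\beta$ so that $0 < e^{-\epsilon} < 1$, which is the crux. From its definition $D_1(\omega) = -\frac{1}{2}\mu_g + \mathcal{O}(1/\omega)$, so the standing hypothesis that $\omega$ is large enough gives the assumed $D_1(\omega) \le -\frac{1}{4}\mu_g < 0$. Factoring, $D_1(\omega)\beta + D_2(\omega)\beta^2 + D_3(\omega)\beta^3 = \beta\bigl(D_1(\omega) + D_2(\omega)\beta + D_3(\omega)\beta^2\bigr)$; since $D_2(\omega)$ and $D_3(\omega)$ are finite once $\omega$ is fixed, choosing $\beta$ small enough that $\vert D_2(\omega)\vert\beta + \vert D_3(\omega)\vert\beta^2 < \frac{1}{4}\mu_g$ forces the bracket to be negative while keeping the full expression in $(0,1)$, so some $\epsilon > 0$ exists. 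The conclusion $\mathbb{E}[V_k] \le e^{-\epsilon k}V_0$ then follows by an immediate induction on $k$. I expect the main obstacle to be the termwise matching of the recursion to the prescribed $D_1,D_2,D_3$; once that is verified, the existence of a contracting $\beta$ is a short small-$\beta$ argument enabled by the strict negativity of the leading coefficient $D_1(\omega)$.
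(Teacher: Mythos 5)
Your proposal is correct and follows essentially the same route as the paper: apply the Lemma~\ref{supp:lem:3} recursion with constant step sizes, bound the noise second moments by $(\Gamma_{i1}\frac{\omega}{c}+\Gamma_{i2})\mathbb{E}[V_k]$ via $\Vert\hat{x}_k\Vert^2 \le \frac{\omega}{c}V_k$ and $\Vert\hat{y}_k\Vert^2 \le V_k$, collect terms by powers of $\beta$ into $D_1,D_2,D_3$, and iterate the resulting contraction. Your two additions — the remark that Lemma~\ref{supp:lem:3} never uses the specific form of the variance bound, and the explicit small-$\beta$ argument for $e^{-\epsilon}<1$ — are both sound and merely make explicit what the paper leaves implicit.
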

\begin{proof}
    We provide a detailed proof in Appendix \ref{supp:proof_of_thm:2}.
\end{proof}

\section{Convergence Under Time-Dependent Noise Assumptions}\label{supp:sec:convergence_time}

\begin{thm}
\label{supp:thm:3}
    Suppose that Assumptions \ref{supp:asm:lambda}--\ref{supp:asm:g},\ref{supp:asm:xi-psi_3} hold. Let $x_k$ and $y_k$ be generated by (\ref{supp:1}). We assume 
    \begin{gather}
        b=1 , a=\frac{2-\gamma_1+\gamma_2}{3} \in (\frac{1}{2}, 1] , t = \frac{2+2\gamma_1+\gamma_2}{3},  \\
        \frac{\alpha}{\beta} \ge \frac{4}{\mu_f} (2 L_\lambda L_g + \frac{2}{\mu_g} L_\lambda^2 L_g^2 (L_\lambda + 1)^2 ) \lor 2 c \lor \frac{\mu_g}{\mu_f} \lor 1 , \\
        \beta \ge \frac{2}{\mu_g}(2a+t) .
    \end{gather}
    Let
    \begin{align}
        C_{\alpha\beta} =& L_{f}^2 + 4 L_\lambda^2 L_g^2 \frac{\beta^2}{\alpha^2} + 2 L_f L_\lambda L_g \frac{\beta}{\alpha} + \frac{2}{\mu_g} L_f^2 L_\lambda^2 L_g^2 (L_\lambda + 1)^2 \beta + \frac{1}{c} L_g^2 ( L_\lambda + 2) \frac{\beta}{\alpha} \notag \\
        &+ c 4 L_\lambda^2  L_g^2 ( L_{\lambda} + 1)^2 \frac{\beta^3}{\alpha^3} + L_g^2 ( 2 L_\lambda^2 + L_\lambda + 3 ) \frac{\beta^2}{\alpha^2} \ge C(\alpha_0, \beta_0) .
    \end{align}
    Set 
    \begin{gather}
        k_0 \ge \alpha^{1/a} \lor \left( \frac{\alpha^2}{\beta} \right)^{1/(2a-1)} \lor \frac{1}{2^{1/t} - 1} \lor \frac{1}{2^{1/2a} - 1} \lor (6 C_{\alpha\beta} \alpha^2)^\frac{1}{2a-1} .
    \end{gather}
    Define 
    \begin{align}
        C_1(M) =& C_{\alpha\beta} \alpha^2 2 M ,\\
        C_2' =& c 2 \alpha \beta \Gamma_{11}' + \frac{\beta^3}{\alpha^2} \bigg( c ( \frac{2}{\mu_f} L_f^2  + \frac{2}{\mu_f} L_f^2 L_\lambda^2 + 2 L_\lambda^2 ) + 1 \bigg) \Gamma_{22}' .
    \end{align}
    There exists a $M$ s.t.
    \begin{gather}
        M \ge 3 k_0^t V_0 \lor \frac{3}{a} C_2'.
    \end{gather}
    Together, we have $\forall k \ge 0$,
    \begin{align}
        \mathbb{E} [ V_k ] \le \frac{M}{(k + k_0)^t} .
    \end{align}
\end{thm}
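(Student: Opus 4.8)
The plan is to prove $\mathbb{E}[V_k] \le M/(k+k_0)^t$ by induction on $k$, driving everything through the one-step recursion of Lemma \ref{supp:lem:3}. The base case $k=0$ is immediate: there $(k+k_0)^t = k_0^t$ and $\mathbb{E}[V_0]=V_0$, so the requirement $M \ge 3k_0^t V_0$ (in particular $M \ge k_0^t V_0$) gives it with room to spare.

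For the inductive step I assume $\mathbb{E}[V_k] \le M/(k+k_0)^t$ and substitute the step sizes $\alpha_k = \alpha/(k+1+k_0)^a$, $\beta_k=\beta/(k+1+k_0)$ together with the time-dependent variance bounds of Assumption \ref{supp:asm:xi-psi_3} into Lemma \ref{supp:lem:3}. The first thing to verify is that the two noise contributions decay at a common rate. The $\xi$-term $c\cdot 2\alpha_k\beta_k\mathbb{E}[\Vert\xi_k\Vert^2]$ is of order $(k+1+k_0)^{-(a+1+\gamma_1)}$, while the dominant $\psi$-term $\frac{\beta_k^3}{\alpha_k^2}\mathbb{E}[\Vert\psi_k\Vert^2]$ is of order $(k+1+k_0)^{-(3-2a+\gamma_2)}$ (the remaining $\psi$-terms carrying $\beta_k^3$, $\beta_k^3/\alpha_k$, $\beta_k^2$ decay strictly faster since $a\in(\tfrac12,1]$). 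The choice $a=\tfrac{2-\gamma_1+\gamma_2}{3}$, $t=\tfrac{2+2\gamma_1+\gamma_2}{3}$ is exactly the solution of the two balancing equations $a+1+\gamma_1 = t+1$ and $3-2a+\gamma_2=t+1$, so both exponents equal $t+1$ and the entire stochastic remainder is bounded by $C_2'(k+1+k_0)^{-(t+1)}$. The hypothesis $\gamma_1-\gamma_2\in[-1,\tfrac12)$ is precisely what keeps $a\in(\tfrac12,1]$, which I use repeatedly (notably to guarantee $2a>1$, so the perturbation $C_{\alpha\beta}\alpha_k^2$ decays faster than the contraction).

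It then remains to verify the scalar inequality
\begin{align}
\Big(1-\tfrac12\mu_g\beta_k+C_{\alpha\beta}\alpha_k^2\Big)\frac{M}{(k+k_0)^t}+\frac{C_2'}{(k+1+k_0)^{t+1}}\le\frac{M}{(k+1+k_0)^t}.
\end{align}
Writing $n=k+1+k_0$ and dividing by $M/n^t$, this becomes $\big(1-\tfrac{\mu_g\beta}{2n}+\tfrac{C_{\alpha\beta}\alpha^2}{n^{2a}}\big)\big(\tfrac{n}{n-1}\big)^t+\tfrac{C_2'}{Mn}\le 1$. The crux is to control $\big(\tfrac{n}{n-1}\big)^t=\big(1+\tfrac1{n-1}\big)^t$ from above by a Bernoulli-type estimate of the form $1+\tfrac{t}{n}+O(n^{-2})$; the contraction $\tfrac{\mu_g\beta}{2n}$ must then beat the $\tfrac{t}{n}$ produced by this expansion, which is exactly what $\beta\ge\frac{2}{\mu_g}(2a+t)$ guarantees (the extra $2a$ over the minimal requirement $t$ leaves slack for both the perturbation and the noise). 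The thresholds on $k_0$ perform the bookkeeping: $k_0\ge(6C_{\alpha\beta}\alpha^2)^{1/(2a-1)}$ forces $C_{\alpha\beta}\alpha_k^2\le \tfrac{1}{6n}$, $k_0\ge\frac{1}{2^{1/t}-1}$ keeps the Bernoulli factor controlled, and $M\ge\frac3a C_2'$ absorbs the residual noise.

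I expect this Bernoulli comparison to be the main obstacle, for two reasons. First, the direction of Bernoulli's inequality flips at $t=1$: for $t\in(0,1)$ one has the clean concave bound $(1+x)^t\le 1+tx$, but for $t\ge1$ one must instead pass through $(1+x)^t\le e^{tx}$ (or a second-order expansion) and carefully bound the quadratic remainder so that it is swallowed by the $O(n^{-2a})$ and $O(n^{-2})$ slack. Second, all the constants (the $\tfrac16$, the factor $3$ in $M$, the $2a$ margin in $\beta$) must be threaded consistently so the single scalar inequality above holds for every $n\ge k_0+1$. Since the structure mirrors that of Theorem \ref{supp:thm:1} --- with the linear program $m(x)$ replaced here by the explicit solution of the two balancing equations, and the $M$-dependent $C_2(M)$ replaced by the genuine constant $C_2'$ --- I would reuse that machinery wherever possible and redo only the exponent accounting and the final scalar estimate.
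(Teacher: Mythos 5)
Your proposal is correct and follows essentially the same route as the paper: induction via Lemma \ref{supp:lem:3}, the exponent-balancing identities $a+\gamma_1=t$ and $2-2a+\gamma_2=t$ (equivalently the paper's $-1+2a=-1+a+t-\gamma_1=-3+4a+t-\gamma_2$), Bernoulli's inequality powered by $\beta\ge\frac{2}{\mu_g}(2a+t)$, and the $k_0$ and $M$ thresholds absorbing the perturbation and noise constants. The only substantive difference is that the obstacle you flag (the direction of $(1+x)^t\le 1+tx$ flipping at $t=1$) is sidestepped in the paper by instead multiplying the recursion by $(k+1+k_0)^{2a+t}$ and applying Bernoulli as $(1-\tfrac{1}{n})^{2a+t}\ge 1-\tfrac{2a+t}{n}$ with exponent $2a+t\ge 1$, then telescoping --- which works uniformly in $t$ and avoids any second-order remainder analysis.
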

\begin{proof}
    We provide a detailed proof in Appendix \ref{supp:proof_of_thm:3}.
\end{proof}

\newpage
\setcounter{equation}{0} \setcounter{table}{0} \setcounter{figure}{0}
\renewcommand{\theequation}{A.\arabic{equation}}
\renewcommand{\thetable}{A.\arabic{table}}
\renewcommand{\theprop}{A.\arabic{prop}}
\renewcommand{\theprop}{A.\arabic{lemma}}
\setcounter{prop}{0}
\setcounter{lemma}{0}
\appendix
\section{Proof of Theoretical Results}
\label{supp:sec:Appendix A}
In this Supplemental Material, we provide proofs for propositions in the article.

\subsection{Proof of Lemma \ref{supp:lem:1}}
\label{supp:proof_of_lem:1}
\begin{proof}
    Recall that $\hat{x} = x - \lambda(y)$. We have 
    \begin{align}
        \Vert \hat{x}_{k+1} \Vert ^2 =& \Vert \hat{x}_k - \alpha_k f(x_k, y_k) \Vert ^2 + \Vert \lambda(y_k) - \lambda(y_{k+1}) - \alpha_k \xi_k \Vert ^2 \notag \\
        &+ 2( \hat{x}_k - \alpha_k f(x_k , y_k))^T (\lambda(y_k) - \lambda(y_{k+1})) - 2\alpha_k (\hat{x}_k - \alpha_k f(x_k , y_k))^T \xi_k .
    \end{align}
    Note that $f(\lambda(y_k), y_k) = 0$ and $g (\lambda(y^*), y^*)=0$. Then, by Assumptions \ref{supp:asm:lambda}--\ref{supp:asm:xi-psi} and the Cauchy-Schwarz inequality, we give a bound for each of these terms. The first term
    \begin{align}
        &\mathbb{E} [\Vert \hat{x}_k - \alpha_k f(x_k, y_k) \Vert ^2 | \mathcal{Q}_{k-1}] \notag \\
        \le& \Vert \hat{x}_k \Vert^2 + \alpha_k^{2}\Vert f(x_k, y_k)-f(\lambda(y_k),y_k) \Vert ^2 - 2\alpha_k \hat{x}_k^T(f(x_k, y_k)-f(\lambda(y_k),y_k)) \notag \\
        \le& (1 - 2 \mu_f \alpha_k + L_{f}^2 \alpha_k^2 )  \Vert \hat{x}_k \Vert ^2 .
    \end{align}
    The second term
    \begin{align}
        &\mathbb{E} [ \Vert \lambda(y_k) - \lambda(y_{k+1}) - \alpha_k \xi_k \Vert ^2 | \mathcal{Q}_{k-1} ] \notag \\
        \le& 2\mathbb{E} [ \Vert \lambda(y_k) - \lambda(y_{k+1})\Vert ^2 | \mathcal{Q}_{k-1} ] + 2\alpha_k^2\mathbb{E}[\Vert \xi_k \Vert^2 | \mathcal{Q}_{k-1}] \notag \\
        \le& 2 L_{\lambda}^2 \beta_k^2 \mathbb{E} [ \Vert g(x_k, y_k) + \psi_k \Vert^2 | \mathcal{Q}_{k-1}] + 2 \alpha_k^2 \mathbb{E}[\Vert \xi_k \Vert^2 | \mathcal{Q}_{k-1}] \notag \\
        \le& 4 L_{\lambda}^2 \beta_k^2 \mathbb{E} [ \Vert g(x_k, y_k) - g(\lambda(y_k), y_k) \Vert^2 + \Vert g(\lambda(y_k), y_k) - g(\lambda(y^*), y^*) \Vert^2 | \mathcal{Q}_{k-1}] \notag \\
        &+2 L_\lambda^2 \beta_k^2 \mathbb{E}[\Vert \psi_k \Vert^2 | \mathcal{Q}_{k-1}] + 2 \alpha_k^2 \mathbb{E}[\Vert \xi_k \Vert^2 | \mathcal{Q}_{k-1}] \notag \\
        \le& 4 L_\lambda^2  L_g^2 \beta_k^2 \Vert \hat{x}_k \Vert ^2 + 4 L_\lambda^2  L_g^2 ( L_{\lambda} + 1)^2 \beta_k^2 \Vert \hat{y}_k \Vert ^2 \notag \\
        &+ 2L_\lambda^2 \beta_k^2 \mathbb{E}[\Vert \psi_k \Vert^2 | \mathcal{Q}_{k-1}] + 2 \alpha_k^2 \mathbb{E}[\Vert \xi_k \Vert^2 | \mathcal{Q}_{k-1}] .
    \end{align}
    The third term
    \begin{align}
        &\mathbb{E}[2( \hat{x}_k - \alpha_k f(x_k , y_k))^T (\lambda(y_k) - \lambda(y_{k+1})) | \mathcal{Q}_{k-1}] \notag \\
        \le& \mathbb{E}[2 \Vert \hat{x}_k - \alpha_k f(x_k , y_k) + \alpha_k f(\lambda(y_k), y_k) \Vert L_\lambda \Vert y_k - y_{k+1} \Vert | \mathcal{Q}_{k-1}] \notag \\
        \le& \mathbb{E}[2 (1 + L_f \alpha_k) L_\lambda \beta_k \Vert \hat{x}_k \Vert ( L_g \Vert \hat{x}_k \Vert + L_g (L_\lambda+1) \Vert \hat{y}_k \Vert +\Vert \psi_k \Vert ) | \mathcal{Q}_{k-1}] \notag \\
        \le& 2 L_\lambda L_g \beta_k \Vert \hat{x}_k \Vert ^2 + 2 L_f L_\lambda L_g \alpha_k \beta_k \Vert \hat{x}_k \Vert ^2 \notag \\
        &+ \frac{2}{\mu_g} (1 + L_f^2 \alpha_k^2) \beta_k L_\lambda^2 L_g^2 (L_\lambda + 1)^2 \Vert \hat{x}_k \Vert ^2 + \mu_g \beta_k \Vert \hat{y}_k \Vert ^2 \notag \\
        &+ \mu_f \alpha_k \Vert \hat{x}_k \Vert ^2 + \frac{2}{\mu_f \alpha_k} (1 + L_f^2 \alpha_k^2) L_f^2 \beta_k^2 \mathbb{E} [ \Vert \psi_k \Vert ^2 | \mathcal{Q}_{k-1}] .
    \end{align}
    where the last inequality uses the Cauchy-Schwarz inequality for the cross terms. The fourth term
    \begin{align}
        & \mathbb{E}[-2\alpha_k (\hat{x}_k - \alpha_k f(x_k , y_k))^T \xi_k | \mathcal{Q}_{k-1}] = 0.
    \end{align}
    According to the above results, we obtain
    \begin{align}
        &\mathbb{E} [\Vert \hat{x}_{k+1} \Vert ^2 | \mathcal{Q}_{k-1}] \notag \\
        \le& (1 - 2 \mu_f \alpha_k + L_{f}^2 \alpha_k^2 ) \Vert \hat{x}_k \Vert ^2 \notag \\
        &+ 4 L_\lambda^2  L_g^2 \beta_k^2 \Vert \hat{x}_k \Vert ^2 + 4 L_\lambda^2  L_g^2 ( L_{\lambda} + 1)^2 \beta_k^2 \Vert \hat{y}_k \Vert ^2 \notag \\
        &+ 2L_\lambda^2 \beta_k^2 \mathbb{E}[\Vert \psi_k \Vert^2 | \mathcal{Q}_{k-1}] + 2 \alpha_k^2 \mathbb{E}[\Vert \xi_k \Vert^2 | \mathcal{Q}_{k-1}] \notag \\
        &+ 2 L_\lambda L_g \beta_k \Vert \hat{x}_k \Vert ^2 + 2 L_f L_\lambda L_g \alpha_k \beta_k \Vert \hat{x}_k \Vert ^2 \notag \\
        &+ \frac{2}{\mu_g} (1 + L_f^2 \alpha_k^2) \beta_k L_\lambda^2 L_g^2 (L_\lambda + 1)^2 \Vert \hat{x}_k \Vert ^2 + \mu_g \beta_k \Vert \hat{y}_k \Vert ^2 \notag \\
        &+ \mu_f \alpha_k \Vert \hat{x}_k \Vert ^2 + \frac{2}{\mu_f \alpha_k} (1 + L_f^2 \alpha_k^2) L_f^2 \beta_k^2 \mathbb{E} [ \Vert \psi_k \Vert ^2 | \mathcal{Q}_{k-1}] \notag \\
        \le& (1 - \mu_f \alpha_k) \Vert \hat{x}_k \Vert ^2 + (2 L_\lambda L_g + \frac{2}{\mu_g} L_\lambda^2 L_g^2 (L_\lambda + 1)^2 ) \beta_k \Vert \hat{x}_k \Vert ^2 \notag \\
        &+ ( L_{f}^2 \alpha_k^2 + 4 L_\lambda^2  L_g^2 \beta_k^2 + 2 L_f L_\lambda L_g \alpha_k \beta_k + \frac{2}{\mu_g} L_f^2 L_\lambda^2 L_g^2 (L_\lambda + 1)^2 \alpha_k^2 \beta_k) \Vert \hat{x}_k \Vert ^2 \notag \\
        &+ \mu_g \beta_k \Vert \hat{y}_k \Vert ^2 + 4 L_\lambda^2  L_g^2 ( L_{\lambda} + 1)^2 \beta_k^2 \Vert \hat{y}_k \Vert ^2 \notag \\
        &+ \frac{2}{\mu_f \alpha_k} (1 + L_f^2 \alpha_k^2) L_f^2 \beta_k^2 \mathbb{E} [ \Vert \psi_k \Vert ^2 | \mathcal{Q}_{k-1}] + 2L_\lambda^2 \beta_k^2 \mathbb{E}[\Vert \psi_k \Vert^2 | \mathcal{Q}_{k-1}] + 2 \alpha_k^2 \mathbb{E}[\Vert \xi_k \Vert^2 | \mathcal{Q}_{k-1}].
    \end{align}
\end{proof}

\subsection{Proof of Lemma \ref{supp:lem:2}}
\label{supp:proof_of_lem:2}

\begin{proof}
    Recall that $\hat{y} = y - y^*$. We have
    \begin{align}
        \hat{y}_{k+1} &= y_{k+1} - y^* = y_k - y^* - \beta_k g(x_k, y_k) - \beta_k \psi_k \notag \\
        &= \hat{y}_k - \beta_k g(\lambda(y_k), y_k) + \beta_k ( g(\lambda(y_k), y_k) - g(x_k, y_k) ) - \beta_k \psi_k ,
    \end{align}
    which implies that
    \begin{align}
        \Vert \hat{y}_{k+1} \Vert ^2 =& \Vert \hat{y}_k - \beta_k g(\lambda(y_k), y_k) \Vert ^2 + \Vert \beta_k ( g(\lambda(y_k), y_k) - g(x_k, y_k) ) - \beta_k \psi_k \Vert ^2 \notag \\
        &+ 2 \beta_k ( \hat{y}_k - \beta_k g(\lambda(y_k), y_k) )^T ( g(\lambda(y_k), y_k) - g(x_k, y_k) ) \notag \\
        &+ 2 \beta_k ( \hat{y}_k - \beta_k g(\lambda(y_k), y_k) )^T \psi_k .
    \end{align}
    We next analyze each term on the right-hand side. First, using $g(\lambda(y^*), y^*) = 0$, we consider the first term
    \begin{align}
        &\Vert \hat{y}_k - \beta_k g(\lambda(y_k), y_k) \Vert ^2 \notag \\
        =& \Vert \hat{y}_k \Vert ^2 - 2 \beta_k \hat{y}_k^T g(\lambda(y_k), y_k) + \beta_k^2 \Vert g(\lambda(y_k), y_k) \Vert ^2 \notag \\
        \le& \Vert \hat{y}_k \Vert ^2 - 2 \mu_g \beta_k \Vert \hat{y}_k \Vert ^2 + \beta_k^2 \Vert g(\lambda(y_k), y_k) - g(\lambda(y^*), y^*) \Vert ^2 \notag \\
        \le& ( 1 - 2 \mu_g \beta_k ) \Vert \hat{y}_k \Vert ^2 + 2 L_g^2 \beta_k^2 ( \Vert \lambda(y_k) - \lambda(y^*) \Vert ^2 + \Vert \hat{y}_k \Vert ^2 ) \notag \\
        \le& ( 1 - 2 \mu_g \beta_k + 2 L_g^2 (L_\lambda ^2 + 1 ) \beta_k^2 ) \Vert \hat{y}_k \Vert ^2 .
    \end{align}
    Next, taking the conditional expectation of the second term on the right-hand side w.r.t $\mathcal{Q}_{k-1}$ and using the property of martingale difference $\psi_{k}$, we have
    \begin{align}
        &\mathbb{E} [ \Vert \beta_k ( g(\lambda(y_k), y_k) - g(x_k, y_k) ) - \beta_k \psi_k \Vert ^2 | \mathcal{Q}_{k-1} ] \notag \\
        \le& \beta_k^2 \Vert g(\lambda(y_k), y_k) - g(x_k, y_k) \Vert ^2 + \beta_k^2 \mathbb{E} [\Vert \psi_k \Vert ^2 | \mathcal{Q}_{k-1}] \notag \\
        \le& L_g^2 \beta_k^2 \Vert \hat{x}_k \Vert ^2 + \beta_k^2 \mathbb{E} [\Vert \psi_k \Vert ^2 | \mathcal{Q}_{k-1}].
    \end{align}
    We consider the third term on the right-hand side
    \begin{align}
        &2 \beta_k ( \hat{y}_k - \beta_k g(\lambda(y_k), y_k) )^T ( g(\lambda(y_k), y_k) - g(x_k, y_k) ) \notag \\
        \le& 2 \beta_k ( \Vert \hat{y}_k \Vert + \beta_k \Vert g(\lambda(y_k), y_k) \Vert ) L_g \Vert \hat{x}_k \Vert \notag \\
        =& 2 L_g \beta_k \left ( \Vert \hat{y}_k \Vert + \beta_k \Vert g(\lambda(y_k), y_k) - g(\lambda(y^*), y^*) \Vert \right ) \Vert \hat{x}_k \Vert  \notag \\
        \le& 2 L_g \beta_k  ( 1 + (L_\lambda + 1) L_g \beta_k) \Vert \hat{x}_k \Vert \Vert \hat{y}_k \Vert \notag \\
        =& 2 L_g \beta_k \Vert \hat{x}_k \Vert \Vert \hat{y}_k \Vert + 2 L_g^2 (L_\lambda + 1) \beta_k^2 \Vert \hat{x}_k \Vert \Vert \hat{y}_k \Vert \notag \\
        \le& \mu_g \beta_k \Vert \hat{y}_k \Vert ^2 + \frac{L_g^2 \beta_k}{\mu_g} \Vert \hat{x}_k \Vert ^2 + L_g^2 ( L_\lambda + 1) \beta_k^2 ( \Vert \hat{x}_k \Vert ^2 + \Vert \hat{y}_k \Vert ^2).
    \end{align}
    where the last inequality is due to the Cauchy-Schwarz inequality. The fourth term
    \begin{align}
        \mathbb{E} [ 2 \beta_k ( \hat{y}_k - \beta_k g(\lambda(y_k), y_k) )^T \psi_k |\mathcal{Q}_{k-1}] = 0.
    \end{align}
    Finally, taking the conditional expectation w.r.t $\mathcal{Q}_{k-1}$, yields
    \begin{align}
        &\mathbb{E} [\Vert \hat{y}_{k+1} \Vert ^2 |\mathcal{Q}_{k-1}] \notag \\
        \le& ( 1 - 2 \mu_g \beta_k + 2 L_g^2 (L_\lambda ^2 + 1 ) \beta_k^2 ) \Vert \hat{y}_k \Vert ^2 + L_g^2 \beta_k^2 \Vert \hat{x}_k \Vert ^2 + \beta_k^2 \mathbb{E} [\Vert \psi_k \Vert ^2 | \mathcal{Q}_{k-1}]  \notag \\
        &+ \mu_g \beta_k \Vert \hat{y}_k \Vert ^2 + \frac{L_g^2 \beta_k}{\mu_g} \Vert \hat{x}_k \Vert ^2 + L_g^2 ( L_\lambda + 1) \beta_k^2 ( \Vert \hat{x}_k \Vert ^2 + \Vert \hat{y}_k \Vert ^2) \notag \\
        =& (1 - \mu_g \beta_k ) \Vert \hat{y}_k \Vert ^2 + \beta_k^2 \mathbb{E} [\Vert \psi_k \Vert ^2 | \mathcal{Q}_{k-1}] \notag \\
        &+ L_g^2 ( 2 L_\lambda^2 + L_\lambda + 3 ) \beta_k^2 \Vert \hat{y}_k \Vert ^2 + \frac{L_g^2 \beta_k}{\mu_g} \Vert \hat{x}_k \Vert ^2 +  L_g^2 ( L_\lambda + 2) \beta_k^2 \Vert \hat{x}_k \Vert ^2 .
    \end{align}
\end{proof}

\subsection{Proof of Lemma \ref{supp:lem:3}}
\label{supp:proof_of_lem:3}

\begin{proof}
    Since
    \begin{gather}
        \frac{1}{2} c \mu_f \beta_k \ge c (2 L_\lambda L_g + \frac{2}{\mu_g} L_\lambda^2 L_g^2 (L_\lambda + 1)^2 ) \frac{\beta_k^2}{\alpha_k} + \frac{L_g^2}{\mu_g} \beta_k, \\
        \frac{1}{2} \mu_g \beta_k \ge c \mu_g \frac{\beta_k^2}{\alpha_k},
    \end{gather}
    using Lemma \ref{supp:lem:1}-\ref{supp:lem:2},  we have
    \begin{align}
        &\mathbb{E} [ V_{k+1} |\mathcal{Q}_{k-1}] \notag \\
        =& c \frac{\beta_k}{\alpha_k} \mathbb{E} [ \Vert \hat{x}_{k+1} \Vert ^2 | \mathcal{Q}_{k-1}] + \mathbb{E} [ \Vert \hat{y}_{k+1} \Vert ^2 | \mathcal{Q}_{k-1}] \notag \\
        \le& c \frac{\beta_k}{\alpha_k} \bigg( (1 - \mu_f \alpha_k) \Vert \hat{x}_k \Vert ^2 + (2 L_\lambda L_g + \frac{2}{\mu_g} L_\lambda^2 L_g^2 (L_\lambda + 1)^2 ) \beta_k \Vert \hat{x}_k \Vert ^2 \notag \\
        &+ ( L_{f}^2 \alpha_k^2 + 4 L_\lambda^2  L_g^2 \beta_k^2 + 2 L_f L_\lambda L_g \alpha_k \beta_k + \frac{2}{\mu_g} L_f^2 L_\lambda^2 L_g^2 (L_\lambda + 1)^2 \alpha_k^2 \beta_k) \Vert \hat{x}_k \Vert ^2 \notag \\
        &+ \mu_g \beta_k \Vert \hat{y}_k \Vert ^2 + 4 L_\lambda^2  L_g^2 ( L_{\lambda} + 1)^2 \beta_k^2 \Vert \hat{y}_k \Vert ^2 \notag \\
        &+ \frac{2}{\mu_f \alpha_k} (1 + L_f^2 \alpha_k^2) L_f^2 \beta_k^2 \mathbb{E} [ \Vert \psi_k \Vert ^2 | \mathcal{Q}_{k-1}] + 2L_\lambda^2 \beta_k^2 \mathbb{E}[\Vert \psi_k \Vert^2 | \mathcal{Q}_{k-1}] + 2 \alpha_k^2 \mathbb{E}[\Vert \xi_k \Vert^2 | \mathcal{Q}_{k-1}] \bigg) \notag \\
        &+ (1 - \mu_g \beta_k ) \Vert \hat{y}_k \Vert ^2 + \beta_k^2 \mathbb{E} [\Vert \psi_k \Vert ^2 | \mathcal{Q}_{k-1}] \notag \\
        &+ L_g^2 ( 2 L_\lambda^2 + L_\lambda + 3 ) \beta_k^2 \Vert \hat{y}_k \Vert ^2 + \frac{L_g^2 \beta_k}{\mu_g} \Vert \hat{x}_k \Vert ^2 +  L_g^2 ( L_\lambda + 2) \beta_k^2 \Vert \hat{x}_k \Vert ^2 \notag \\
        \le& V_k - \frac{1}{2} c \mu_f \beta_k \Vert \hat{x}_k \Vert ^2 - \frac{1}{2} \mu_g \beta_k \Vert \hat{y}_k \Vert ^2 \notag \\
        &+ \bigg( c( L_{f}^2 \alpha_k \beta_k + 4 L_\lambda^2  L_g^2 \frac{\beta_k^3}{\alpha_k} + 2 L_f L_\lambda L_g \beta_k^2 + \frac{2}{\mu_g} L_f^2 L_\lambda^2 L_g^2 (L_\lambda + 1)^2 \alpha_k \beta_k^2 ) \notag \\
        &\qquad \qquad \qquad \qquad \qquad \qquad  \qquad \qquad \qquad \qquad \qquad \qquad + L_g^2 ( L_\lambda + 2) \beta_k^2 \bigg) \Vert \hat{x}_k \Vert ^2 \notag \\
        &+ \bigg( c 4 L_\lambda^2  L_g^2 ( L_{\lambda} + 1)^2 \frac{\beta_k^3}{\alpha_k} + L_g^2 ( 2 L_\lambda^2 + L_\lambda + 3 ) \beta_k^2 \bigg) \Vert \hat{y}_k \Vert ^2 \notag \\
        &+ c 2 \alpha_k \beta_k \mathbb{E}[\Vert \xi_k \Vert^2 | \mathcal{Q}_{k-1}] + \bigg( c ( \frac{2}{\mu_f} L_f^2 \frac{\beta_k^3}{\alpha_k^2} + \frac{2}{\mu_f} L_f^2 L_\lambda^2 \beta_k^3 + 2 L_\lambda^2 \frac{\beta_k^3}{\alpha_k} ) + \beta_k^2 \bigg) \mathbb{E} [\Vert \psi_k \Vert ^2 | \mathcal{Q}_{k-1}].
    \end{align}
    Since $\frac{\alpha_k}{\beta_k} \ge \frac{\mu_g}{\mu_f} $,
    \begin{align}
        &\mathbb{E} [ V_{k+1} |\mathcal{Q}_{k-1}] \notag \\
        \le& ( 1 - \frac{1}{2} \mu_g \beta_k ) V_k \notag \\
        &+ \alpha_k^2 c \frac{\beta_k}{\alpha_k} \Vert \hat{x}_k \Vert ^2 \bigg( L_{f}^2 + 4 L_\lambda^2  L_g^2 \frac{\beta_k^2}{\alpha_k^2} + 2 L_f L_\lambda L_g \frac{\beta_k}{\alpha_k} + \frac{2}{\mu_g} L_f^2 L_\lambda^2 L_g^2 (L_\lambda + 1)^2 \beta_k \notag \\
        &\qquad \qquad \qquad \qquad \qquad \qquad  \qquad \qquad \qquad \qquad \qquad \qquad + \frac{1}{c} L_g^2 ( L_\lambda + 2) \frac{\beta_k}{\alpha_k} \bigg) \notag \\
        &+ \alpha_k^2 \Vert \hat{y}_k \Vert ^2 \bigg( c 4 L_\lambda^2  L_g^2 ( L_{\lambda} + 1)^2 \frac{\beta_k^3}{\alpha_k^3} + L_g^2 ( 2 L_\lambda^2 + L_\lambda + 3 ) \frac{\beta_k^2}{\alpha_k^2} \bigg) \notag \\
        &+ c 2 \alpha_k \beta_k \mathbb{E}[\Vert \xi_k \Vert^2 | \mathcal{Q}_{k-1}] + \bigg( c ( \frac{2}{\mu_f} L_f^2 \frac{\beta_k^3}{\alpha_k^2} + \frac{2}{\mu_f} L_f^2 L_\lambda^2 \beta_k^3 + 2 L_\lambda^2 \frac{\beta_k^3}{\alpha_k} ) + \beta_k^2 \bigg) \mathbb{E} [\Vert \psi_k \Vert ^2 | \mathcal{Q}_{k-1}].
    \end{align}
    Recall that $\alpha_k, \beta_k$ decreasing and $\frac{\alpha_k}{\beta_k} \ge \frac{\alpha_0}{\beta_0}$,
    \begin{align}
        &\mathbb{E} [ V_{k+1} |\mathcal{Q}_{k-1}] \notag \\
        \le& ( 1 - \frac{1}{2} \mu_g \beta_k ) V_k \notag \\
        &+ \alpha_k^2 V_k \bigg( L_{f}^2 + 4 L_\lambda^2 L_g^2 \frac{\beta_0^2}{\alpha_0^2} + 2 L_f L_\lambda L_g \frac{\beta_0}{\alpha_0} + \frac{2}{\mu_g} L_f^2 L_\lambda^2 L_g^2 (L_\lambda + 1)^2 \beta_0 + \frac{1}{c} L_g^2 ( L_\lambda + 2) \frac{\beta_0}{\alpha_0} \notag \\
        &\qquad \qquad \qquad \qquad \qquad \qquad + c 4 L_\lambda^2  L_g^2 ( L_{\lambda} + 1)^2 \frac{\beta_0^3}{\alpha_0^3} + L_g^2 ( 2 L_\lambda^2 + L_\lambda + 3 ) \frac{\beta_0^2}{\alpha_0^2} \bigg) \notag \\
        &+ c 2 \alpha_k \beta_k \mathbb{E}[\Vert \xi_k \Vert^2 | \mathcal{Q}_{k-1}] + \bigg( c ( \frac{2}{\mu_f} L_f^2 \frac{\beta_k^3}{\alpha_k^2} + \frac{2}{\mu_f} L_f^2 L_\lambda^2 \beta_k^3 + 2 L_\lambda^2 \frac{\beta_k^3}{\alpha_k} ) + \beta_k^2 \bigg) \mathbb{E} [\Vert \psi_k \Vert ^2 | \mathcal{Q}_{k-1}] \notag \\
        =& ( 1 - \frac{1}{2} \mu_g \beta_k ) V_k + C(\alpha_0, \beta_0) \alpha_k^2 V_k \notag \\
        &+ c 2 \alpha_k \beta_k \mathbb{E}[\Vert \xi_k \Vert^2 | \mathcal{Q}_{k-1}] + \bigg( c ( \frac{2}{\mu_f} L_f^2 \frac{\beta_k^3}{\alpha_k^2} + \frac{2}{\mu_f} L_f^2 L_\lambda^2 \beta_k^3 + 2 L_\lambda^2 \frac{\beta_k^3}{\alpha_k} ) + \beta_k^2 \bigg) \mathbb{E} [\Vert \psi_k \Vert ^2 | \mathcal{Q}_{k-1}].
    \end{align}
    Taking the expectation, we have
    \begin{align}
        &\mathbb{E} [ V_{k+1} ] \notag \\
        \le& ( 1 - \frac{1}{2} \mu_g \beta_k ) \mathbb{E} [ V_k ] + C(\alpha_0, \beta_0) \alpha_k^2 \mathbb{E} [ V_k ] \notag \\
        &+ c 2 \alpha_k \beta_k \mathbb{E}[\Vert \xi_k \Vert^2] + \bigg( c ( \frac{2}{\mu_f} L_f^2 \frac{\beta_k^3}{\alpha_k^2} + \frac{2}{\mu_f} L_f^2 L_\lambda^2 \beta_k^3 + 2 L_\lambda^2 \frac{\beta_k^3}{\alpha_k} ) + \beta_k^2 \bigg) \mathbb{E} [\Vert \psi_k \Vert ^2].
    \end{align}
\end{proof}

\subsection{Proof of Theorem \ref{supp:thm:1}}
\label{supp:proof_of_thm:1}

\begin{proof}
    Using Lyapunov inequality, we have for $\delta_{ij}$ in $[0,1)$,
    \begin{align}
        \mathbb{E}[\Vert \xi_k \Vert^2] \le \Gamma_{11} \big( \mathbb{E} [ \Vert \hat{x}_k \Vert ^2] \big) ^ {\delta_{11}} + \Gamma_{12} \big( \mathbb{E} [ \Vert \hat{y}_k \Vert ^2] \big) ^ {\delta_{12}}, \\
        \mathbb{E}[\Vert \psi_k \Vert^2] \le \Gamma_{21} \big( \mathbb{E} [ \Vert \hat{x}_k \Vert ^2] \big) ^ {\delta_{21}} + \Gamma_{22} \big( \mathbb{E} [ \Vert \hat{y}_k \Vert ^2] \big) ^ {\delta_{22}}.
    \end{align}
    This yields
    \begin{align}
        \mathbb{E}[\Vert \xi_k \Vert^2] \le \Gamma_{11} \big( \frac{1}{c} \frac{\alpha_k}{\beta_k} \mathbb{E} [ V_k ] \big) ^ {\delta_{11}} + \Gamma_{12} \big( \mathbb{E} [ V_k ] \big) ^ {\delta_{12}}, \\
        \mathbb{E}[\Vert \psi_k \Vert^2] \le \Gamma_{21} \big( \frac{1}{c} \frac{\alpha_k}{\beta_k} \mathbb{E} [ V_k ] \big) ^ {\delta_{21}} + \Gamma_{22} \big( \mathbb{E} [ V_k ] \big) ^ {\delta_{22}}.
    \end{align}
    Combining with Lemma \ref{supp:lem:3}, we get
    \begin{align}
        &\mathbb{E} [ V_{k+1} ] \notag \\
        \le& ( 1 - \frac{1}{2} \mu_g \beta_k ) \mathbb{E} [ V_k ] + C_{\alpha\beta} \alpha_k^2 \mathbb{E} [ V_k ] \notag \\
        &+ c 2 \alpha_k \beta_k \left( \Gamma_{11} \big( \frac{1}{c} \frac{\alpha_k}{\beta_k} \mathbb{E} [ V_k ] \big) ^ {\delta_{11}} + \Gamma_{12} \big( \mathbb{E} [ V_k ] \big) ^ {\delta_{12}} \right) \notag \\
        &+ \bigg( c ( \frac{2}{\mu_f} L_f^2 \frac{\beta_k^3}{\alpha_k^2} + \frac{2}{\mu_f} L_f^2 L_\lambda^2 \beta_k^3 + 2 L_\lambda^2 \frac{\beta_k^3}{\alpha_k} ) + \beta_k^2 \bigg) \left( \Gamma_{21} \big( \frac{1}{c} \frac{\alpha_k}{\beta_k} \mathbb{E} [ V_k ] \big) ^ {\delta_{21}} + \Gamma_{22} \big( \mathbb{E} [ V_k ] \big) ^ {\delta_{22}} \right) \notag \\
        \le& ( 1 - \frac{1}{2} \mu_g \beta_k ) \mathbb{E} [ V_k ] + C_{\alpha\beta} \alpha_k^2 \mathbb{E} [ V_k ] \notag \\
        &+ c 2 \alpha_k \beta_k \left( \Gamma_{11} \big( \frac{1}{c} \frac{\alpha_k}{\beta_k} \mathbb{E} [ V_k ] \big) ^ {\delta_{11}} + \Gamma_{12} \big( \mathbb{E} [ V_k ] \big) ^ {\delta_{12}} \right) \notag \\
        &+ \frac{\beta_k^3}{\alpha_k^2} \bigg( c ( \frac{2}{\mu_f} L_f^2  + \frac{2}{\mu_f} L_f^2 L_\lambda^2 + 2 L_\lambda^2 ) + 1 \bigg) \left( \Gamma_{21} \big( \frac{1}{c} \frac{\alpha_k}{\beta_k} \mathbb{E} [ V_k ] \big) ^ {\delta_{21}} + \Gamma_{22} \big( \mathbb{E} [ V_k ] \big) ^ {\delta_{22}} \right),
    \end{align}
    where $\alpha_k \le 1, \alpha_k / \beta_k \ge 1$, and $\beta_k^2 \le \beta_k^3 / \alpha_k^2$ is due to $\left( \alpha^2 / \beta \right)^{1/(2a-1)}  \le k_0 \le k+1+k_0$. Now we prove it by induction, for $k = 0$, it holds because \begin{align}
        M \ge 3 k_0^t V_0  \ge k_0^t V_0.
    \end{align}
    Using Bernoulli's inequality 
    \begin{gather}
        1 - \frac{1}{2} \mu_g \beta_k \le 1 - \frac{2a+t}{k+1+k_0} \le (1 - \frac{1}{k+1+k_0})^{2a+t}, \\
        \frac{1}{(k + k_0)^t} \le \frac{2}{(k + 1 + k_0)^t} ,
    \end{gather}
    then for $k+1$, from the condition we get
    \begin{align}
        &( k + 1 + k_0 )^{2a+t} \mathbb{E} [ V_{k+1} ] \notag \\
        \le& ( k + k_0 )^{2a+t} \mathbb{E} [ V_k ] + C_{\alpha\beta} \alpha^2 (k+1+k_0)^{t} \frac{2M}{(k + 1 + k_0)^t} \notag \\
        &+ c 2 \alpha \beta (k+1+k_0)^{-1+a+t} \notag \\
        &\qquad \cdot \left( \Gamma_{11} \big( \frac{1}{c} \frac{\alpha}{\beta} ( k+1+k_0 )^{1-a} 2M (k + 1 + k_0)^{-t} \big) ^ {\delta_{11}} + \Gamma_{12} \big( 2M (k + 1 + k_0)^{-t} \big) ^ {\delta_{12}} \right) \notag \\
        &+ \frac{\beta^3}{\alpha^2} ( k+1+k_0 )^{-3+4a+t} \bigg( c ( \frac{2}{\mu_f} L_f^2  + \frac{2}{\mu_f} L_f^2 L_\lambda^2 + 2 L_\lambda^2 ) + 1 \bigg) \notag \\
        & \qquad \cdot \left( \Gamma_{21} \big( \frac{1}{c} \frac{\alpha}{\beta} ( k+1+k_0 )^{1-a} 2M (k + 1 + k_0)^{-t} \big) ^ {\delta_{21}} + \Gamma_{22} \big( 2M (k + 1 + k_0)^{-t} \big) ^ {\delta_{22}} \right) \notag \\
        =& ( k + k_0 )^{2a+t} \mathbb{E} [ V_k ] + C_1(M) \notag \\
        &+ c 2 \alpha \beta \Gamma_{11} ( \frac{1}{c} \frac{\alpha}{\beta} 2 M )^{\delta_{11}} (k+1+k_0)^{-1+a+t+(1-a-t)\delta_{11}} \notag \\
        &+ c 2 \alpha \beta \Gamma_{12} ( 2 M )^{\delta_{12}} (k+1+k_0)^{-1+a+t-t\delta_{12}} \notag \\
        &+ \frac{\beta^3}{\alpha^2} \bigg( c ( \frac{2}{\mu_f} L_f^2  + \frac{2}{\mu_f} L_f^2 L_\lambda^2 + 2 L_\lambda^2 ) + 1 \bigg) \Gamma_{21} ( \frac{1}{c} \frac{\alpha}{\beta} 2 M )^{\delta_{21}} (k+1+k_0)^{-3+4a+t+(1-a-t)\delta_{21}} \notag \\
        &+ \frac{\beta^3}{\alpha^2} \bigg( c ( \frac{2}{\mu_f} L_f^2  + \frac{2}{\mu_f} L_f^2 L_\lambda^2 + 2 L_\lambda^2 ) + 1 \bigg) \Gamma_{22} ( 2 M )^{\delta_{22}} (k+1+k_0)^{-3+4a+t-t\delta_{22}} .
    \end{align}
    And using the property of $m(x)$,
    \begin{align}
        & -1 + 2a \notag \\
        \ge& -1+a+t+(1-a-t)\delta_{11} \lor -1+a+t-t\delta_{12} \notag \\
        &\lor -3+4a+t+(1-a-t)\delta_{21} \lor -3+4a+t-t\delta_{22},
    \end{align}
    we have
    \begin{align}
        ( k + 1 + k_0 )^{2a+t} \mathbb{E} [ V_{k+1} ] \le ( k + k_0 )^{2a+t} \mathbb{E} [ V_k ] + C_1(M) + C_2(M) (k+1+k_0)^{-1+2a}.
    \end{align}
    Because
    \begin{gather}
        \sum_{i=0}^{k} (i+1+k_0)^{-1+2a} \le \int_0^{k+2+k_0} i^{-1+2a} \text{d} i = \frac{1}{2a} (k+2+k_0)^{2a}, \\
        (k+2+k_0)^{2a} \le 2 (k+1+k_0)^{2a},
    \end{gather}
    do telescope sum
    \begin{align}
        ( k + 1 + k_0 )^{2a+t} \mathbb{E} [ V_{k+1} ] \le& k_0^{2a+t} V_0 + C_1(M) (k+1) + C_2(M) \frac{1}{2a} (k+2+k_0)^{2a} \notag \\
        \le& k_0^{2a+t} V_0 + C_1(M) (k+1) + C_2(M) \frac{1}{2a} 2 (k+1+k_0)^{2a}.
    \end{align}
    This tells
    \begin{align}
        \mathbb{E} [ V_{k+1} ] \le& \frac{k_0^{2a+t}}{( k + 1 + k_0 )^{2a+t}} V_0 + C_1(M) \frac{1}{( k + 1 + k_0 )^{-1+2a+t}} + \frac{C_2(M)}{a} \frac{1}{( k + 1 + k_0 )^{t}} \notag \\
        \le& \frac{1}{3} \frac{M}{(k + 1 + k_0)^t} + \frac{1}{3} \frac{M}{(k + 1 + k_0)^t} + \frac{1}{3} \frac{M}{(k + 1 + k_0)^t} \notag \\
        =& \frac{M}{(k + 1 + k_0)^t}.
    \end{align}
    Finally, we come to a conclusion.
\end{proof}

\subsection{Proof of Corollary \ref{supp:cor:1}}
\label{supp:proof_of_cor:1}

\begin{proof}
    Recall that
    \begin{align}
        m(x) =& \frac{(1+\delta_{11}) x - \delta_{11}}{1 - \delta_{11}} \land \frac{x}{1 - \delta_{12}} \land \frac{(2 - \delta_{21}) (1-x) }{1 - \delta_{21}} \land \frac{2(1-x)}{ 1 - \delta_{22}} ,
    \end{align}
    where $\delta_{11} = \delta_{12}, \delta_{21} = \delta_{22}$. Since $\forall x \in (\frac{1}{2} , 1]$,
    \begin{align}
        \frac{(1+\delta_{11}) x - \delta_{11}}{1 - \delta_{11}} \le \frac{x}{1 - \delta_{11}} , \frac{(2 - \delta_{22}) (1-x) }{1 - \delta_{22}} \le \frac{2(1-x)}{ 1 - \delta_{22}},
    \end{align}
    we have
    \begin{gather}
        a=\argmax_{x \in (\frac{1}{2}, 1]} m(x) = \frac{2 - \delta_{11} - \delta_{22}}{ 3 - \delta_{11} - 2 \delta_{22}} = \frac{\Delta_{11} + \Delta_{22}}{ \Delta_{11} + 2 \Delta_{22}}, \\
        t = \max_{x \in (\frac{1}{2}, 1]} m(x) = \frac{(1+\delta_{11}) a - \delta_{11}}{1 - \delta_{11}} = \frac{(2 - \delta_{22}) (1-a) }{1 - \delta_{22}} = \frac{1 + \Delta_{22}}{\Delta_{11} + 2 \Delta_{22}} .
    \end{gather}
    Then according to Theorem \ref{supp:thm:1}, $V_k$ has a convergence rate of $\mathcal{O} \left( k^{-t} \right)$.
\end{proof}

\subsection{Proof of Theorem \ref{supp:thm:2}}
\label{supp:proof_of_thm:2}

\begin{proof}
    Since
    \begin{align}
        \mathbb{E}[\Vert \xi_k \Vert ^2 ] &\le \Gamma_{11} \mathbb{E} [ \Vert \hat{x}_k \Vert ^2] + \Gamma_{12} \mathbb{E} [ \Vert \hat{y}_k \Vert ^2] \le \Gamma_{11} \frac{1}{c} \frac{\alpha}{\beta} \mathbb{E} [ V_k ] + \Gamma_{12} \mathbb{E} [ V_k ] , \\
        \mathbb{E}[\Vert \psi_k \Vert ^2 ] &\le \Gamma_{21} \mathbb{E} [ \Vert \hat{x}_k \Vert ^2] + \Gamma_{22} \mathbb{E} [ \Vert \hat{y}_k \Vert ^2] \le \Gamma_{21} \frac{1}{c} \frac{\alpha}{\beta} \mathbb{E} [ V_k ] + \Gamma_{22} \mathbb{E} [ V_k ],
    \end{align}
    Lemma \ref{supp:lem:3} yields
    \begin{align}
        &\mathbb{E} [ V_{k+1} ] \notag \\
        \le& ( 1 - \frac{1}{2} \mu_g \beta ) \mathbb{E} [ V_k ] + (B_1 + B_2 \beta) \alpha^2 \mathbb{E} [ V_k ] + c 2 \alpha \beta \left( \Gamma_{11} \frac{1}{c} \frac{\alpha}{\beta} \mathbb{E} [ V_k ] + \Gamma_{12} \mathbb{E} [ V_k ] \right) \notag \\
        &+ \bigg( c ( \frac{2}{\mu_f} L_f^2 \frac{\beta^3}{\alpha^2} + \frac{2}{\mu_f} L_f^2 L_\lambda^2 \beta^3 + 2 L_\lambda^2 \frac{\beta^3}{\alpha} ) + \beta^2 \bigg) \left( \Gamma_{21} \frac{1}{c} \frac{\alpha}{\beta} \mathbb{E} [ V_k ] + \Gamma_{22} \mathbb{E} [ V_k ] \right) \notag \\
        =& ( 1 - \frac{1}{2} \mu_g \beta ) \mathbb{E} [ V_k ] + (B_1 + B_2 \beta) \omega^2 \beta^2 \mathbb{E} [ V_k ] + c 2 \omega \beta^2 \left( \Gamma_{11} \frac{1}{c} \omega + \Gamma_{12} \right)  \mathbb{E} [ V_k ] \notag \\
        &+ \bigg( c ( \frac{2}{\mu_f} L_f^2 \frac{1}{\omega^2} \beta + \frac{2}{\mu_f} L_f^2 L_\lambda^2 \beta^3 + 2 L_\lambda^2 \frac{1}{\omega} \beta^2 ) + \beta^2 \bigg) \left( \Gamma_{21} \frac{1}{c} \omega + \Gamma_{22} \right)  \mathbb{E} [ V_k ] \notag \\
        =& ( 1 + D_1 (\omega) \beta + D_2 (\omega) \beta^2 + D_3 (\omega) \beta^3 ) \mathbb{E} [ V_k ] \notag \\
        =& e^{-\epsilon} \mathbb{E} [ V_k ].
    \end{align}
    This gives us, $\forall k \ge 0$
    \begin{align}
        \mathbb{E} [ V_k ] \le e^{-\epsilon k} V_0 .
    \end{align}
\end{proof}

\subsection{Proof of Theorem \ref{supp:thm:3}}
\label{supp:proof_of_thm:3}

\begin{proof}
    Assumption \ref{supp:asm:xi-psi_3} combining with Lemma \ref{supp:lem:3} yields
    \begin{align}
        &\mathbb{E} [ V_{k+1} ] \notag \\
        \le& ( 1 - \frac{1}{2} \mu_g \beta_k ) \mathbb{E} [ V_k ] + C_{\alpha\beta} \alpha_k^2 \mathbb{E} [ V_k ] \notag \\
        &+ c 2 \alpha_k \beta_k \Gamma_{11}' (k+1+k_0)^{-\gamma_1} \notag \\
        &+ \bigg( c ( \frac{2}{\mu_f} L_f^2 \frac{\beta_k^3}{\alpha_k^2} + \frac{2}{\mu_f} L_f^2 L_\lambda^2 \beta_k^3 + 2 L_\lambda^2 \frac{\beta_k^3}{\alpha_k} ) + \beta_k^2 \bigg) \Gamma_{22}' (k+1+k_0)^{-\gamma_2} \\
        \le& ( 1 - \frac{1}{2} \mu_g \beta_k ) \mathbb{E} [ V_k ] + C_{\alpha\beta} \alpha_k^2 \mathbb{E} [ V_k ] \notag \\
        &+ c 2 \alpha_k \beta_k \Gamma_{11}' (k+1+k_0)^{-\gamma_1} \notag \\
        &+ \frac{\beta_k^3}{\alpha_k^2} \bigg( c ( \frac{2}{\mu_f} L_f^2  + \frac{2}{\mu_f} L_f^2 L_\lambda^2 + 2 L_\lambda^2 ) + 1 \bigg) \Gamma_{22}' (k+1+k_0)^{-\gamma_2},
    \end{align}
    where $\alpha_k \le 1, \alpha_k / \beta_k \ge 1$, and $\beta_k^2 \le \beta_k^3 / \alpha_k^2$ is due to $\left( \alpha^2 / \beta \right)^{1/(2a-1)}  \le k_0 \le k+1+k_0$. Now we prove it by induction, for $k = 0$, it holds because \begin{align}
        M \ge 3 k_0^t V_0  \ge k_0^t V_0.
    \end{align}
    Using Bernoulli's inequality 
    \begin{gather}
        1 - \frac{1}{2} \mu_g \beta_k \le 1 - \frac{2a+t}{k+1+k_0} \le (1 - \frac{1}{k+1+k_0})^{2a+t}, \\
        \frac{1}{(k + k_0)^t} \le \frac{2}{(k + 1 + k_0)^t} ,
    \end{gather}
    then for $k+1$, from the condition we get
    \begin{align}
        &( k + 1 + k_0 )^{2a+t} \mathbb{E} [ V_{k+1} ] \notag \\
        \le& ( k + k_0 )^{2a+t} \mathbb{E} [ V_k ] + C_{\alpha\beta} \alpha^2 (k+1+k_0)^{t} \frac{2M}{(k + 1 + k_0)^t} \notag \\
        &+ c 2 \alpha \beta (k+1+k_0)^{-1+a+t} \Gamma_{11}' (k+1+k_0)^{-\gamma_1} \notag \\
        &+ \frac{\beta^3}{\alpha^2} ( k+1+k_0 )^{-3+4a+t} \bigg( c ( \frac{2}{\mu_f} L_f^2  + \frac{2}{\mu_f} L_f^2 L_\lambda^2 + 2 L_\lambda^2 ) + 1 \bigg) \Gamma_{22}' (k+1+k_0)^{-\gamma_2}  \notag \\
        =& ( k + k_0 )^{2a+t} \mathbb{E} [ V_k ] + C_1(M) \notag \\
        &+ c 2 \alpha \beta \Gamma_{11}' (k+1+k_0)^{-1+a+t-\gamma_1} \notag \\
        &+ \frac{\beta^3}{\alpha^2} \bigg( c ( \frac{2}{\mu_f} L_f^2  + \frac{2}{\mu_f} L_f^2 L_\lambda^2 + 2 L_\lambda^2 ) + 1 \bigg) \Gamma_{22}' ( k+1+k_0 )^{-3+4a+t-\gamma_2}.
    \end{align}
    By definition,
    \begin{align}
        -1 + 2a = -1+a+t-\gamma_1 = -3+4a+t-\gamma_2,
    \end{align}
    we have
    \begin{align}
        ( k + 1 + k_0 )^{2a+t} \mathbb{E} [ V_{k+1} ] \le ( k + k_0 )^{2a+t} \mathbb{E} [ V_k ] + C_1(M) + C_2' (k+1+k_0)^{-1+2a}.
    \end{align}
    Because
    \begin{gather}
        \sum_{i=0}^{k} (i+1+k_0)^{-1+2a} \le \int_0^{k+2+k_0} i^{-1+2a} \text{d} i = \frac{1}{2a} (k+2+k_0)^{2a}, \\
        (k+2+k_0)^{2a} \le 2 (k+1+k_0)^{2a},
    \end{gather}
    do telescope sum
    \begin{align}
        ( k + 1 + k_0 )^{2a+t} \mathbb{E} [ V_{k+1} ] \le& k_0^{2a+t} V_0 + C_1(M) (k+1) + C_2' \frac{1}{2a} (k+2+k_0)^{2a} \notag \\
        \le& k_0^{2a+t} V_0 + C_1(M) (k+1) + C_2' \frac{1}{2a} 2 (k+1+k_0)^{2a}.
    \end{align}
    This tells
    \begin{align}
        \mathbb{E} [ V_{k+1} ] \le& \frac{k_0^{2a+t}}{( k + 1 + k_0 )^{2a+t}} V_0 + C_1(M) \frac{1}{( k + 1 + k_0 )^{-1+2a+t}} + \frac{C_2'}{a} \frac{1}{( k + 1 + k_0 )^{t}} \notag \\
        \le& \frac{1}{3} \frac{M}{(k + 1 + k_0)^t} + \frac{1}{3} \frac{M}{(k + 1 + k_0)^t} + \frac{1}{3} \frac{M}{(k + 1 + k_0)^t} \notag \\
        =& \frac{M}{(k + 1 + k_0)^t}.
    \end{align}
    Finally, we come to a conclusion.
\end{proof}

\section{Figures of Numerical Experiments}
\label{supp:sec:Appendix B}

\begin{figure*}[th!]
    \centering
    {
        \includegraphics[width = 0.45\linewidth]{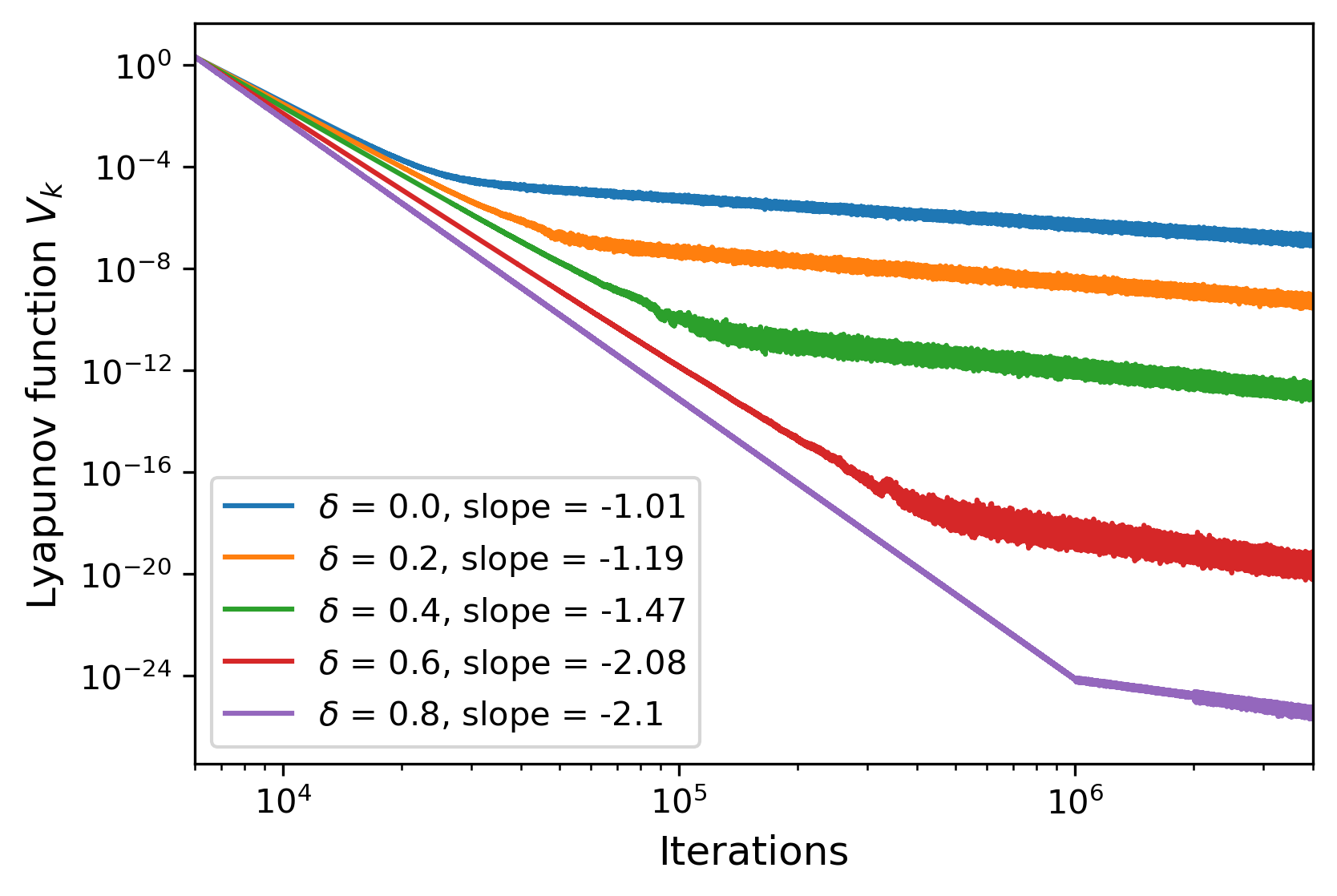}
    }
    {
        \includegraphics[width = 0.45\linewidth]{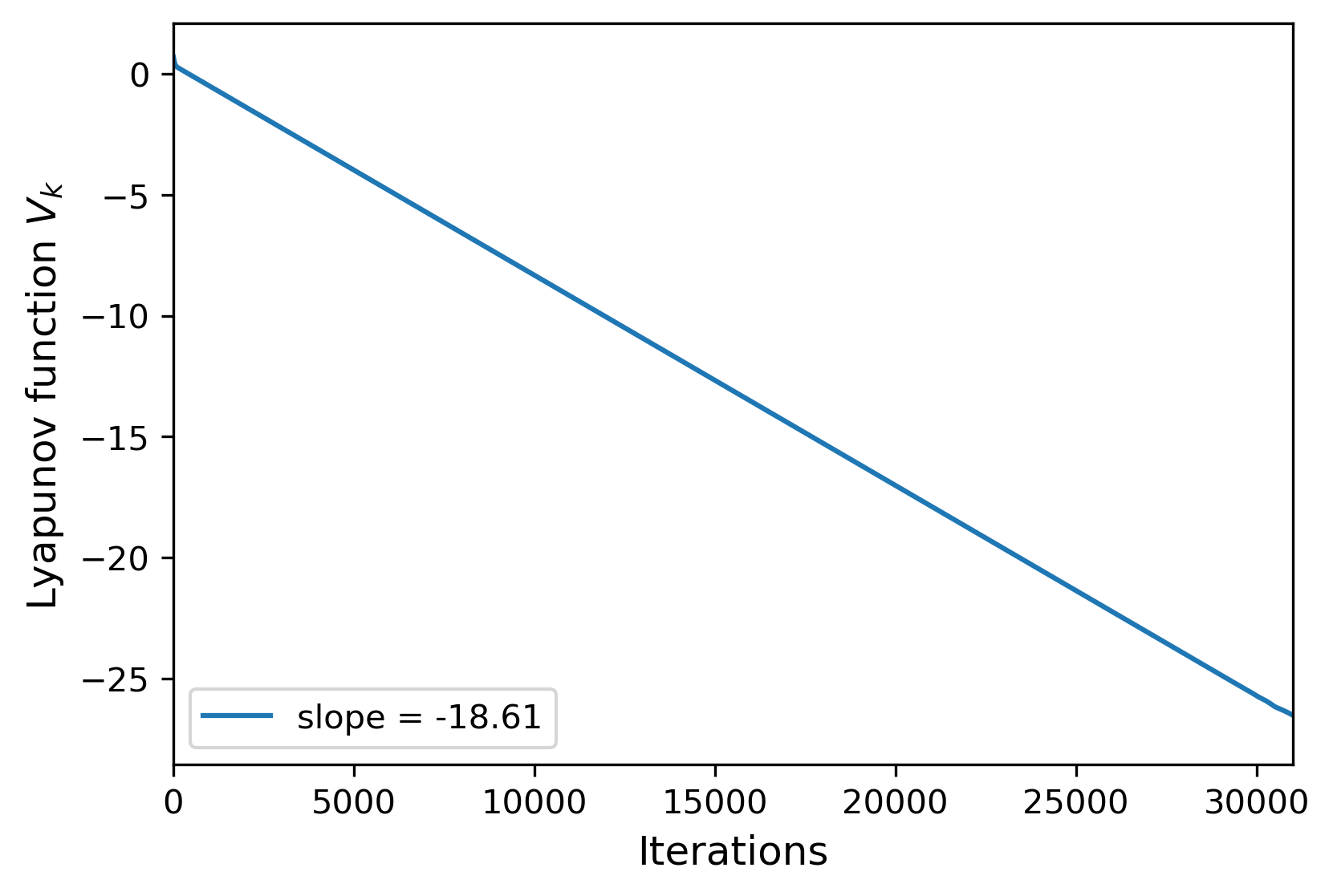}
    }
\caption{The convergence results of SGD with Polyak-Ruppert averaging. The figure on the left is a log-log plot in case $\delta=0.0, 0.2, 0.4, 0.6, 0.8$; the figure on the right is a logarithmic plot in case $\delta=1$. All R-squares of the fitted slopes do not exceed 1e-6.}
\label{supp:fig:total_SGD_PR_averaging}
\end{figure*}

\begin{figure*}[th!]
    \centering
    {
        \includegraphics[width = 0.45\linewidth]{Figures/SBO.png}
    }
    {
        \includegraphics[width = 0.45\linewidth]{Figures/ex_SBO.png}
    }
\caption{The convergence results of stochastic bilevel optimization. The figure on the left is a log-log plot in case $\delta=0.0, 0.2, 0.4, 0.6, 0.8$; the figure on the right is a logarithmic plot in case $\delta=1$. All R-squares of the fitted slopes do not exceed 0.01.}
\label{supp:fig:total_SBO}
\end{figure*}

\begin{figure*}[th!]
    \centering
    {
        \includegraphics[width = 0.45\linewidth]{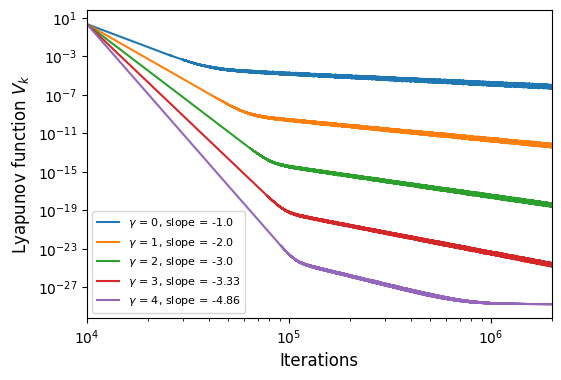}
    }
    {
        \includegraphics[width = 0.45\linewidth]{Figures/time_SBO.png}
    }
\caption{The convergence results of SGD with Polyak-Ruppert averaging (left) and stochastic bilevel optimization (right) under time-dependent noise assumption. The figure is a log-log plot in case $\gamma=0, 1, 2, 3, 4$. All R-squares of the fitted slopes do not exceed 0.1.}
\label{supp:fig:total_time}
\end{figure*}


\end{document}